\newtheorem{theorem}{Theorem}[section]
\newtheorem{proposition}[theorem]{Proposition}
\newtheorem{lemma}[theorem]{Lemma}
\newtheorem{corollary}[theorem]{Corollary}
\newtheorem{remark}[theorem]{Remark}
\newcommand{\ble}{\begin{lemma}}
\newcommand{\ele}{\end{lemma}}
\newcommand{\be}{\begin{equation*}}
\newcommand{\ee}{\end{equation*}}
\newcommand{\bel}{\begin{equation}}
\newcommand{\eel}{\end{equation}}
\newcommand{\ep}{\varepsilon}
\newcommand{\fr}{\frac }
\newcommand{\lap}{\Delta}
\newcommand{\N}{\mathbb{N}}
\newcommand{\na}{\nabla}
\newcommand{\R}{\mathbb{R}}
\renewcommand{\to}{\rightarrow}
\newcommand{\To}{\longrightarrow}
\newcommand{\xip}{x_{i,p}}
\newcommand{\xjp}{x_{j,p}}
\newcommand{\xp}{x_{p}}
\newcommand{\mip}{\mu_{i,p}}
\newcommand{\mup}{\mu_{p}}
\newcommand{\upp}{u_p}
\def\sideremark#1{\ifvmode\leavevmode\fi\vadjust{\vbox to0pt{\vss
 \hbox to 0pt{\hskip\hsize\hskip1em
 \vbox{\hsize2.1cm\tiny\raggedright\pretolerance10000
  \noindent #1\hfill}\hss}\vbox to15pt{\vfil}\vss}}}%
\begin{document}

\numberwithin{equation}{section}
\parindent=0pt
\hfuzz=2pt
\frenchspacing

\title[]{Asymptotic analysis and sign changing bubble towers for Lane-Emden problems}

\author[]{Francesca De Marchis, Isabella Ianni, Filomena Pacella}

\address{Francesca De Marchis, University of Roma {\em Tor Vergata}, Via della Ricerca
Scientifica 1, 00133 Roma, Italy}
\address{Isabella Ianni, Second University of Napoli, V.le Lincoln 5, 81100 Caserta, Italy}
\address{Filomena Pacella, University of Roma {\em Sapienza}, P.le Aldo Moro 8, 00185 Roma, Italy}

\thanks{2010 \textit{Mathematics Subject classification:} 35B05, 35B06, 35J91. }

\thanks{ \textit{Keywords}: superlinear elliptic boundary value problems, sign-changing solutions, asymptotic analysis, bubble towers.}

\thanks{Research partially supported by FIRB project {\sl Analysis and Beyond} and PRIN 2009-WRJ3W7 grant. }

\begin{abstract} We consider the semilinear Lane-Emden problem
\begin{equation}\label{problemAbstract}\left\{\begin{array}{lr}-\Delta u= |u|^{p-1}u\qquad  \mbox{ in }\Omega\\
u=0\qquad\qquad\qquad\mbox{ on }\partial \Omega
\end{array}\right.\tag{$\mathcal E_p$}
\end{equation}
where $p>1$ and $\Omega$ is a smooth bounded domain of $\R^2$. The aim of the paper is to analyze the asymptotic behavior of  sign changing solutions of \eqref{problemAbstract}, as $p\to+\infty$. 
Among other results we show, under some symmetry assumptions on $\Omega$, that the positive and negative parts of a family of symmetric solutions concentrate at the same point, as $p\to+\infty$, and the limit profile looks like a tower of two bubbles given by a superposition of a regular and a singular solution of the Liouville problem in $\mathbb R^2$.
\end{abstract}

\maketitle

\section{Introduction}
\label{section:intro}
Let $\Omega$ be a smooth bounded domain of $\R^2$.  We consider the Lane-Emden problem
\begin{equation}\label{problem}\left\{\begin{array}{lr}-\Delta u= |u|^{p-1}u\qquad  \mbox{ in }\Omega\\
u=0\qquad\qquad\qquad\mbox{ on }\partial \Omega
\end{array}\right.
\end{equation}
where $p>1$.
\\

The aim of this paper is to contribute to the analysis of the concentration phenomenon for sign changing solutions of problem \eqref{problem} as the exponent $p\rightarrow +\infty$.
\\

In order to explain properly the results and the difficulties related to this investigation let us make a short survey of known results and a comparison with the higher dimensional case when $\Omega\subseteq\mathbb R^N$, $N\geq 3$, $p<\frac{N+2}{N-2}$ and $p\rightarrow \frac{N+2}{N-2}$, i.e. $p$ approaches the critical Sobolev exponent from below.
\\

In higher dimension there is a large literature dealing with the asymptotic behavior of positive solutions, while very little is known for sign changing ones. The reason is that there is a lack of understanding of the finite energy nodal solutions of the ``limit'' problem
\begin{equation}\label{eq:problemaLimiteInHigherDim}
-\Delta Z=|Z|^{\frac{4}{N-2}}Z \ \mbox{ in }\  \mathbb R^N,\ N\geq 3
\end{equation}
which naturally arises in the study of the asymptotic behavior of solutions of \eqref{problem}. We refer to \cite{Struwe_Book} for further details.

The only completely understood case for sign changing solutions, in higher dimension, is when they have low energy, i.e. for solutions $(u_p)$ satisfying
\begin{equation}
\int_{\Omega} |\nabla u_p|^2 dx \rightarrow 2S^{\frac{N}{2}}\ \mbox{ as }\  p \rightarrow \frac{N+2}{N-2},
\end{equation}
where $S$ is the best Sobolev constant.

In \cite{BenAyed_ElMehdi_Pacella} a complete classification of these solutions is provided showing that there are two possibilities. The first one occurs if, for a family of solutions $(u_p)$, there exists a positive constant such that
\begin{equation}\label{eq:condizioneBenAyed_ElMehdi_Pacella}
 \frac{1}{C}\leq \frac{\|u_p^+\|_{\infty}}{\|u_p^-\|_{\infty}}\leq C\ \mbox{ as }\ p\rightarrow \frac{N+2}{N-2}.
\end{equation}
Then $u_p$ blow-up and concentrate at two distinct points of $\Omega$ and suitable scalings of $u_p^+$ and $u_p^-$ (positive and negative part of $u_p$) converge, as $p\rightarrow\frac{N+2}{N-2}$, to a positive regular solution $Z$ of \eqref{eq:problemaLimiteInHigherDim}. In other words the limit profile of $u_p$ is that of two separate bubbles carrying the same energy. Moreover the nodal set touches the boundary of $\Omega$.
The second case arises if \eqref{eq:condizioneBenAyed_ElMehdi_Pacella} does not hold, then it is proved in \cite{BenAyed_ElMehdi_Pacella} that $u_p^+$ and $u_p^-$ blow-up, they concentrate at the same point and they have the local limit profile, after rescaling, of a positive regular solution $Z$ of \eqref{eq:problemaLimiteInHigherDim}. Hence the solution $u_p$ looks like a ``tower'' of two standard bubbles, each one carrying the same energy.
Moreover the nodal set does not touch $\partial\Omega$.
\\

Let us now consider the case when $\Omega\subset\mathbb R^2$.  The first papers where an asymptotic analysis of \eqref{problem}, as $p\rightarrow +\infty$, has been done are \cite{RenWei1, RenWei2}. The authors consider the case of families $(u_p)$ of least energy (hence positive) solutions and in some domains they prove concentration results as well as some asymptotic estimates. Note that the solutions do not blow up as $p\to+\infty$ (unlike the higher dimensional case). Moreover the least energy solutions, for the $2$-dimensional Lane-Emden problem, satisfy the condition
\begin{equation}\label{condizioneLeastEnergy2dim}
p\int_{\Omega}|\nabla u_p|^2dx\rightarrow 8\pi e, \ \mbox{ as }\ p\rightarrow +\infty.
\end{equation}
They left open the question of identifying a limit problem and of detecting the limit of $\|u_p\|_{\infty}$ which was conjectured to be $\sqrt{e}$. Later, inspired by the paper \cite{AdiStruwe} concerning $2$-dimensional problems with critical exponential nonlinearities, Adimurthi and Grossi in \cite{AdiGrossi} showed that a suitable scaling of the least energy solutions $u_p$ converges in $C^1_{loc}(\R^2)$ to a regular solution $U$ of the Liouville problem
\begin{equation}
\label{LiouvilleEquation}
\left\{
\begin{array}{lr}
-\Delta U=e^U\quad\mbox{ in }\R^2\\
\int_{\R^2}e^Udx= 8\pi.
\end{array}
\right.
\end{equation}
They also considered general bounded domains and showed that the $L^{\infty}$-norm of $u_p$ indeed converges to $\sqrt{e}$.\\
Recently in \cite{SantraWei} the authors have analyzed the asymptotic behavior of solutions of some biharmonic equations and pointed out that the same analysis also applies to a family of positive solutions of \eqref{problem} satisfying the condition
\begin{equation}
\label{condition:energiaLimitata}
p\int_{\Omega}|\nabla u_p|^2dx \rightarrow \beta <+\infty \ \mbox{ as }\ p\rightarrow +\infty,
\end{equation}
for some positive constant $\beta\geq 8\pi e$.
Their results show the concentration of the solutions at a finite number of distinct points in $\Omega$, excluding the presence of non simple concentration points (i.e. bubble towers) and give a quantization of the energy.

Concerning sign changing solutions, the asymptotic analysis started in \cite{GrossiGrumiauPacella1} by considering a family $(u_p)$ of low-energy nodal solutions as for the higher dimensional case. Note that, for the $2$-dimensional problem,  this means
\begin{equation}
p\int_{\Omega}|\nabla u_p|^2dx \rightarrow 16\pi e \ \mbox{ as }\ p\rightarrow +\infty,
\end{equation}
which is the analogous of  \eqref{condizioneLeastEnergy2dim} for low-energy positive solutions.

In \cite{GrossiGrumiauPacella1} it is proved that if the minimum and the maximum of $u_p$ are comparable, i.e. if there exists $K\geq 0$ such that
\begin{equation}\label{condizioneComparabilita}
p\left(\|u_p^+\|_{\infty}-\|u_p^-\|_{\infty} \right)\rightarrow K\ \mbox{ as }\ p\rightarrow +\infty
\end{equation}
(which is the analogous of \eqref{eq:condizioneBenAyed_ElMehdi_Pacella} when $N\geq 3$) then $u_p$ concentrate at two distinct points of $\Omega$ and suitable scalings of $u_p^+$ and $u_p^-$ converge to a regular solution $U$ of \eqref{LiouvilleEquation}. Hence the situation is the same as in the higher dimensional case when \eqref{eq:condizioneBenAyed_ElMehdi_Pacella} holds. Moreover in \cite{GrossiGrumiauPacella1} it is proved that if $u_p$ has Morse index $2$ then the maximum and the minimum converge to $\pm\sqrt{e}$ and the nodal line touches the boundary of $\Omega$.

Next, one would like  to consider the case when \eqref{condizioneComparabilita} does not hold and would expect the presence of non-simple concentration points or, in other words, the existence of solutions whose limit profile is given by the superposition of two bubbles, as it happens when $N\geq 3$. The asymptotic analysis in this case looks difficult when $N=2$. However solutions with this limit profile do exist as it has first been proved in \cite{GrossiGrumiauPacella2} by analyzing the asymptotic behavior of least energy radial nodal solutions in the ball. More precisely in \cite{GrossiGrumiauPacella2} the authors prove the following result
\begin{theorem}[Grossi, Grumiau \& Pacella \cite{GrossiGrumiauPacella2}]
Let $(u_p)$ be a family of least energy radial nodal solutions in the unit ball $B$ centered at the origin with $u_p(0)>0$. Then:
\begin{itemize}
\item[i)] a suitable rescaling $z_p^+$ of $u_p^+$ converges in $C^1_{loc}(\R^2)$ to a regular solution $U$ of \eqref{LiouvilleEquation}
\item[ii)] a suitable rescaling and translation $z_p^-$ of $u_p^-$ converges in $C^1_{loc}(\R^2\setminus\{0\})$ to a singular radial solution $V$ of
\begin{equation}
\label{LiouvilleSingularEquation}
\left\{
\begin{array}{lr}
-\Delta V=e^V+ H\delta_0\quad\mbox{ in }\R^2\\
\int_{\R^2}e^Vdx<\infty
\end{array}
\right.
\end{equation}
where $H$ is a negative suitable constant and $\delta_0$ is the Dirac measure centered at $0$.
\end{itemize}
Moreover
\begin{eqnarray*}
&&\|u_p^+\|_{\infty}\rightarrow \alpha^+\simeq 2.46 >\sqrt{e}\\
&&\|u_p^-\|_{\infty}\rightarrow \alpha^-\simeq 1.17 <\sqrt{e}\\
&&p\int_B |\nabla u_p|^2 dx\rightarrow C\simeq 332 >16\pi e\\
&& pu_p(x)\rightarrow 2\pi\gamma G(x,0)=\gamma \log |x| \ \mbox{ as }\ p\rightarrow +\infty
\end{eqnarray*}
for some $\gamma>0$, where $G$ is the Green function on the ball.
\end{theorem}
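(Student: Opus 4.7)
The plan is to exploit the radial symmetry to reduce everything to a one-dimensional analysis, and then to introduce two different rescalings, one near the origin capturing the positive bubble and one near the nodal sphere capturing the negative bubble, showing that the first concentrates on a much smaller scale than the second so that the positive part appears as a Dirac source in the limit problem for the negative part.

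First, using radial symmetry and $u_p(0)>0$, I would set $M_p^+=\|u_p^+\|_\infty=u_p(0)$, $M_p^-=\|u_p^-\|_\infty=|u_p(s_p)|$ where $s_p$ is the radius at which $u_p^-$ attains its maximum, and let $r_p\in(0,s_p)$ be the unique nodal radius. Define the Adimurthi--Grossi rescaling near $0$ by
\[
z_p^+(x)=\frac{p}{M_p^+}\bigl(u_p(\mu_p^+ x)-M_p^+\bigr),\qquad (\mu_p^+)^2=\frac{1}{p\,(M_p^+)^{p-1}},
\]
so that $z_p^+$ satisfies $-\Delta z_p^+=|1+z_p^+/p|^{p-1}(1+z_p^+/p)$ on an expanding ball. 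Uniform $C^1_{loc}$ bounds together with $z_p^+(0)=0$, $\nabla z_p^+(0)=0$ and an energy bound of the form $p\int_\Omega|\nabla u_p|^2 dx\le C$ (to be proved using the radial ODE and Pohozaev identities) allow one to pass to the limit and identify $z_p^+\to U$ where $U$ is the radial regular solution of \eqref{LiouvilleEquation}. This step follows closely the one-bubble analysis in \cite{AdiGrossi}.

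For the second, singular bubble, I would rescale near $s_p$ by
\[
z_p^-(x)=-\frac{p}{M_p^-}\bigl(u_p(s_p+\mu_p^- x)+M_p^-\bigr),\qquad (\mu_p^-)^2=\frac{1}{p\,(M_p^-)^{p-1}}.
\]
The crux of the argument and the main obstacle is to show first that $\mu_p^+/\mu_p^-\to 0$ (equivalently $M_p^-/M_p^+\to 0$ on a logarithmic scale or, more precisely, that the two concentration scales are separated), and second that the rescaled annuli $\{|x-s_p|<\mu_p^- R\}$, viewed from the negative bubble's scale, see the positive bubble as a point mass. Concretely, I would use the radial Green representation for the ODE, multiply the equation by suitable test functions and integrate on $[0,r_p]$, to show that
\[
p\int_0^{r_p}|u_p|^{p-1}u_p\,r\,dr\to -\frac{H}{2\pi}
\]
for some negative constant $H$, which becomes the Dirac source in the limit. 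Combined with the outside convergence $p u_p(x)\to 2\pi\gamma G(x,0)=\gamma\log|x|$ (obtained by testing against $G(\cdot,0)$ and using the uniform $L^1$-type bound $p\int_\Omega|u_p|^p\,dx\le C$), this shows that $z_p^-$ converges in $C^1_{loc}(\R^2\setminus\{0\})$ to a singular solution $V$ of \eqref{LiouvilleSingularEquation}.

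Finally, the quantitative statements follow by matching constants. The explicit radial solutions $U$ and $V$ of the Liouville problems are known in closed form, and $\|U\|_\infty$, $\|V\|_\infty$, $\int_{\R^2}e^U$, $\int_{\R^2}e^V$ are computable; passing to the limit in $\|u_p^{\pm}\|_\infty=M_p^\pm$ after writing them in terms of the rescaled profiles produces the numerical values $\alpha^+\simeq 2.46$ and $\alpha^-\simeq 1.17$. The energy quantization $p\int_B|\nabla u_p|^2\,dx\to C\simeq 332$ then comes from a Pohozaev identity on the ball, decomposing the integral into contributions from the two scales and using $\int_{\R^2}e^U=8\pi$ and the corresponding mass of $e^V$ plus $|H|$. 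The hardest technical step throughout is the separation of scales and the derivation of the Dirac source, since controlling the interaction between $u_p^+$ and $u_p^-$ in two dimensions, where no Sobolev blow-up occurs, requires delicate radial ODE estimates rather than standard concentration-compactness.
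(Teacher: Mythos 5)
First, a point of context: the present paper does not prove this theorem at all --- it is quoted verbatim from \cite{GrossiGrumiauPacella2}, and the introduction even remarks that the proofs there ``depend strongly on $1$-dimensional estimates.'' So your proposal can only be measured against the cited radial proof and against the related non-radial machinery of Sections \ref{SectionGeneralAnalysis}--\ref{Section:GSymmetricMin}. Your overall architecture is the right one and matches both: reduce to the radial ODE, rescale at two separated scales $\mu_p^+\ll\mu_p^-$, show that the positive nodal region collapses, at the scale $\mu_p^-$, to a point mass whose sign (after dividing the equation by $u_p(s_p)<0$) is opposite to that of $e^V$, whence $H<0$, and classify the limit as a radial singular Liouville solution. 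Two quantitative slips along the way: the scale separation $\mu_p^+/\mu_p^-\to0$ is \emph{not} equivalent to $M_p^-/M_p^+\to0$ (both $M_p^\pm$ converge to finite limits larger than $1$); since $(\mu_p^+/\mu_p^-)^2=(M_p^-/M_p^+)^{p-1}$, it is equivalent to $p(M_p^+-M_p^-)\to+\infty$, cf.\ the ``type $B'$'' remark after the proof of Theorem \ref{TeoremaPrincipaleCasoSimmetrico}. Moreover you must rule out that the singularity sits at rescaled distance $0$ or $+\infty$ from the minimum sphere, i.e.\ prove that $\ell=\lim_p s_p/\mu_p^-$ exists in $(0,+\infty)$; if $\ell=0$ no singular limit can appear and if $\ell=+\infty$ the Dirac escapes. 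This is exactly the content of the flux argument of Proposition \ref{prop:NLp+l>0} and of \eqref{minimVaAZero}, and it is not automatic from ``the scales are separated.''

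The genuine gap is in your final paragraph. The limit profiles are normalized to vanish at the respective extremum points ($U\le0$ with $U(0)=0$, and $V\le0$ with $V(\ell)=0$), so they carry no information about the absolute heights $M_p^\pm$: you cannot read $\alpha^+\simeq2.46$ and $\alpha^-\simeq1.17$ off $\|U\|_\infty$ or $\|V\|_\infty$, which are both $0$. Those constants come from a separate matching argument that couples the inner expansions at the two scales with the outer behaviour $pu_p\to\gamma\log|x|$, via sharp pointwise one-dimensional estimates and Pohozaev-type identities; this is precisely the hardest part of \cite{GrossiGrumiauPacella2}, and the present paper is unable to reproduce it in the non-radial setting --- the analogous statements are left as conjectures \textbf{(C1)}--\textbf{(C3)} in Section \ref{sectionOpenProblems}. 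As written, ``passing to the limit in $\|u_p^\pm\|_\infty$ after writing them in terms of the rescaled profiles'' is circular, and the energy value $C\simeq332$, which depends on $\alpha^\pm$ and on $\ell$ through the total mass $\int_{\R^2}e^V\,dx=8\pi(1+\eta)$, inherits the same gap.
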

This result shows a substantial difference between the cases $N=2$ and $N\geq 3$. It means that for $N=2$ there exist solutions which asymptotically look like the superposition of different bubbles given by a regular solution of \eqref{LiouvilleEquation} and a singular solution of \eqref{LiouvilleSingularEquation}. Moreover each bubble does not carry the same energy (unlike when $N\geq 3$).\\

One of the main results of this paper shows that the same phenomenon appears in other domains, different from balls, under some symmetry assumption.\\

We do this through the asymptotic analysis of a family of sign changing solutions found recently in \cite{DeMarchisIanniPacella}.
The main feature of these solutions is that they have an interior nodal line which does not intersect the fixed point of the symmetry group of the domain. We believe that this is the peculiarity of solutions having a bubble tower profile.\\

To state precisely our result we introduce some notations. For a given family $(u_p)$ of sign changing solutions of \eqref{problem} we denote by

\begin{itemize}
\item $NL_p$ the nodal line of $\upp$
\item $x_p^{\pm}$ the maximum/minimum point in $\Omega$ of $u_p$, i.e. $u_p(x_p^{\pm})=\pm\|u_p^{\pm}\|_{\infty}$
\item $\mu_p^{\pm}:=\frac{1}{\sqrt{p|u_p(x_p^{\pm})|^{p-1}}}$
\item $\widetilde{\Omega}_p^{\pm}:=\frac{\Omega-x_p^{\pm}}{\mu_p^{\pm}}=\{x\in\mathbb R^2: x_p^{\pm}+\mu_p^{\pm}x\in \Omega\}$.
\end{itemize}

\

We prove
\begin{theorem}
\label{TeoremaPrincipaleCasoSimmetrico}
Let $\Omega\subset\R^2$ be a simply connected bounded smooth domain, invariant under the action of a cyclic group $G$ of
rotations about the origin (hence the origin $O\in\Omega$),
with $|G|\geq 4e$ ($|G|$ is the order of $G$).
Let $(\upp)$ be a family of sign changing $G$-symmetric  solutions of \eqref{problem} with two nodal regions, $NL_p\cap\partial\Omega=\emptyset$, $O\not\in NL_p$ and
\bel\label{assumptionEnergyINIZIO}
p\int_{\Omega}|\nabla\upp|^2 dx\leq\alpha\,8\pi e
\eel
for some  $\alpha<5$ and $p$ large. Then, assuming w.l.o.g. that $\|u_p\|_{\infty}=\|u_p^+\|_{\infty}$, we have
\begin{itemize}
\item[i)] $|x_p^{\pm}|\rightarrow O \ \mbox{ as }\ p\rightarrow +\infty$;
\item[ii)] $NL_p$ shrinks to the origin as $p\rightarrow +\infty$;
\item[iii)] the rescaled functions $v_p^+(x):=p\frac{u_p(x_p^++\mu_p^+ x)-u_p(x_p^+)}{u_p(x_p^+)}$, about the maximum point, defined in $\widetilde{\Omega}_p^{+}$ converge (up to a subsequence) to the regular solution $U$ of \eqref{LiouvilleEquation} with $U(0)=0$, in $C^1_{loc}(\R^2)$;
\item[iv)] the rescaled functions $v_p^-(x):=p\frac{u_p(x_p^-+\mu_p^- x)-u_p(x_p^-)}{u_p(x_p^-)}$, about the minimum point, defined in $\widetilde{\Omega}_p^{-}$ converge (up to a subsequence) to a singular solution  of \eqref{LiouvilleSingularEquation} for some suitable constant $H$, in
$C^1_{loc}(\R^2\setminus\{x_{\infty}\})$, where $x_{\infty}$ is a point in $\R^2$ with $|x_{\infty}|=\ell=\lim_{p\rightarrow +\infty}\frac{|x_p^-|}{\mu_p^-}>0$;
\item[v)] $\sqrt{p}u_p\rightarrow 0$ in $C^1_{loc}(\bar\Omega\setminus\{0\})$ as $p\rightarrow +\infty$.
\end{itemize}
\end{theorem}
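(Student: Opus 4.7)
The plan is to first exploit the $G$-symmetry ($|G|\geq 4e$) together with the energy bound $\al<5$ in order to force both concentration points $x_p^\pm$ to collapse to the fixed point $O$, and then to run an Adimurthi--Grossi type blow-up analysis at the maximum and at the minimum separately to identify the two limit profiles. Once this concentration structure is in place, item (v) will fall out of standard Green function estimates. As a starting point, the energy bound together with the $L^\infty$-lower bound $|u_p(x_p^\pm)|\geq c>0$ (itself proved by a contradiction argument from \eqref{assumptionEnergyINIZIO}) forces $\mu_p^\pm\to 0$, so the rescaled domains $\widetilde\Omega_p^\pm$ invade $\R^2$. Both $v_p^\pm$ satisfy the equation
\[
-\Delta v_p^{\pm}=\left|1+\fr{v_p^{\pm}}{p}\right|^{p-1}\!\!\left(1+\fr{v_p^{\pm}}{p}\right),
\]
and since $v_p^+\leq v_p^+(0)=0$ globally, the Adimurthi--Grossi argument \cite{AdiGrossi} yields locally uniform bounds and the convergence $v_p^+\to U$ in $C^1_{loc}(\R^2)$ to the regular Liouville profile with $U(0)=0$, which is (iii).

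For (i)--(ii) I would argue by contradiction. If $|x_p^+|\not\to 0$ along a subsequence, then by $G$-symmetry the orbit $G\cdot x_p^+$ consists of $|G|$ distinct global maxima of $u_p$, around each of which (iii) reproduces an independent regular Liouville bubble. Each such bubble contributes asymptotically at least $8\pi e(1+o(1))$ to $p\int_\Omega|\na u_p|^2\,dx$, for a total of at least $|G|\cdot 8\pi e>4e\cdot 8\pi e>5\cdot 8\pi e$, contradicting \eqref{assumptionEnergyINIZIO}. Hence $x_p^+\to O$. Using that $u_p$ has only two nodal regions (so the orbit $G\cdot x_p^-$ lies in the unique negative nodal domain and produces separated bubbles) together with an analogous orbit-energy count for the negative side, one similarly forces $x_p^-\to O$. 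Since the nodal line separates the two points and both converge to $O$, item (ii) follows.

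The main technical obstacle is (iv). On the $\mu_p^-$-scale the positive blow-up point $x_p^+$ sits at distance $|x_p^+-x_p^-|/\mu_p^-$, and one must show this has a finite positive limit $\ell$. The lower bound $\ell>0$ is enforced by the nodal line separating $x_p^-$ from $x_p^+$ (which on the $\mu_p^-$-scale remains at distance $\geq c>0$ from the origin), while the upper bound $\ell<\infty$ is the delicate point: it requires a precise comparison of the two rates $\mu_p^\pm$ together with the exclusion of the degeneration in which the positive bubble escapes to infinity on the minimum's scale. The latter is typically done via a Pohozaev identity on an annulus surrounding $x_p^-$, combined with the fact that the positive mass at $x_p^+$ cannot vanish asymptotically (ensured by (iii)). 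Once $\ell\in(0,\infty)$ is known, passing to the limit in the rescaled equation and invoking the classification of Liouville-type solutions with one isolated singularity identifies the limit as a solution of \eqref{LiouvilleSingularEquation}, where the Dirac coefficient $H<0$ is determined by the asymptotic mass of $u_p^+$ that collapses to $x_\infty$ on the $\mu_p^-$-scale.

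Finally, (v) follows from the Green representation
\[
u_p(x)=\int_\Omega G(x,y)\,|u_p(y)|^{p-1}u_p(y)\,dy
\]
together with the mass quantization extracted from (iii)--(iv), namely $p\,|u_p|^{p-1}u_p\rightharpoonup(\beta^+-\beta^-)\delta_O$ weakly in the sense of measures, with $\beta^\pm$ finite positive constants. This gives $pu_p(x)\to(\beta^+-\beta^-)G(x,O)$ uniformly on compacta of $\bar\Omega\setminus\{O\}$, so $pu_p$ is bounded in $C^1_{loc}(\bar\Omega\setminus\{O\})$ by standard elliptic regularity, and dividing by $\sqrt p$ yields $\sqrt p\,u_p\to 0$ in $C^1_{loc}(\bar\Omega\setminus\{O\})$, as required.
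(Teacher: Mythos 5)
Your overall architecture (symmetry plus energy counting to force concentration at $O$, separate blow-ups at the maximum and the minimum, classification of the two limit profiles) matches the paper's, but several of the individual steps are unjustified or reversed. First, in the orbit-energy count you assert that each bubble in the orbit $G\cdot x_p^+$ carries at least $8\pi e(1+o(1))$ of $p\int_\Omega|\nabla u_p|^2dx$. What is actually known is only $8\pi\alpha_i^2$ with $\alpha_i=\liminf_p|u_p(x_{i,p})|\geq 1$ (Lemma \ref{lemma:BoundEnergia}(iii)); the lower bound $\|u_p\|_\infty\geq\sqrt e$ is precisely what is \emph{not} available (cf.\ Remark \ref{rem:buonNormaInfinitoAbbassaSimmetria} and conjecture (C1)). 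The count closes, as in Proposition \ref{MaxVaazero}, only after also subtracting the energy $\geq 8\pi e$ of the negative part (Lemma \ref{LemmaLowerUpper}), which yields $|G|\cdot 8\pi\leq(\alpha-1)\,8\pi e<4\cdot 8\pi e$ and hence the contradiction with $|G|\geq 4e$. Second, item (ii) does not follow from $x_p^\pm\to O$: the nodal line is a closed curve and only its \emph{closest} point to $O$ is controlled by $|x_p^-|$; showing that $\max_{y\in NL_p}|y|/\mu_p^-\to 0$ requires the dichotomy argument of Proposition \ref{prop:NodalLineShrinks}, based on the auxiliary rescaling at scale $|x_p^-|$ of Lemma \ref{Lemma:scalingPreliminareOrigine}, and this statement is itself an essential input for (iv).

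Third, in (iv) you have the two bounds on $\ell$ the wrong way round, and your justification of $\ell>0$ is false. The upper bound $|x_p^-|/\mu_p^-\leq C$ is the easy part (it follows from $(\mathcal{P}_3^1)$ once one knows $k=1$, Corollary \ref{prop:MinVaazero}); the delicate part is $\ell>0$, and it cannot come from ``the nodal line stays at distance $\geq c>0$ from the origin on the $\mu_p^-$-scale'' --- the opposite is true: the nodal line shrinks to the origin \emph{faster} than $\mu_p^-$. The correct mechanism is the flux argument of Proposition \ref{prop:NLp+l>0}: if $\ell=0$, the normal-derivative flux through $\partial B_{|x_p^-|}(0)$ is $o_p(1)$, while that ball contains the positive bubble at $x_p^+$, which contributes a fixed positive mass, a contradiction. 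Moreover, after passing to the limit one must still exclude the non-radial $(\eta+1)$-symmetric singular solutions allowed by the Prajapat--Tarantello classification; this again uses $|G|\geq 4e$ together with $\alpha<5$ (see \eqref{maggiore}--\eqref{minoreNew}) and is absent from your argument. Finally, your proof of (v) rests on the mass quantization $p|u_p|^{p-1}u_p\rightharpoonup(\beta^+-\beta^-)\delta_O$, which is not established --- the paper explicitly cannot quantify the energy carried by each bubble; the statement follows instead from the pointwise bound $(\mathcal{P}_3^k)$, which gives $\|\Delta(\sqrt p\,u_p)\|_{L^\infty(K)}\to 0$ on compacta $K\subset\bar\Omega\setminus\{O\}$, combined with $\sqrt p\,u_p\rightharpoonup 0$ in $H^1_0(\Omega)$.
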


\

\begin{remark} The existence of sign changing solutions satisfying the hypotheses of Theorem \ref{TeoremaPrincipaleCasoSimmetrico} has been proved in \cite{DeMarchisIanniPacella} for any simply connected $G$-symmetric smooth bounded domain with $|G|\geq 4$.
\end{remark}

The results of Theorem \ref{TeoremaPrincipaleCasoSimmetrico} show that both $u^+$ and $u^-$ concentrate at the origin, and, after the above rescalings, they have the limit profile of a regular and a singular solution of the Liouville equation in $\R^2$.
\\

As far as we know, this is the first result of this kind for problem \eqref{problem} in domains different from the ball.\\

The starting point to prove Theorem \ref{TeoremaPrincipaleCasoSimmetrico} is an asymptotic analysis of a general family $(u_p)$ of sign changing solutions of
\eqref{problem} satisfying the condition \eqref{condition:energiaLimitata}. These first results, inspired by the paper \cite{Druet} (see also \cite{DruetHebeyRobert}) can be viewed as a first step towards the analysis of the asymptotic behavior of general sign changing solutions of \eqref{problem}. This kind of profile decomposition results have been proved for several other kind of problems and go back to the papers of Brezis-Coron (\cite{Brezis, BC1, BC2}) whose proofs apply also to critical exponent problems (see for instance \cite{GrossiPacella}).\\
		
Next we use the symmetry assumptions to prove that the maximum points $x_p^+$ converge to the origin as well as any other concentration points.\\

The hardest part of the asymptotic analysis is to prove that the rescaling about the minimum point $x_p^-$ converges to a radial singular solution of a singular Liouville problem in $\R^2$. Indeed, while the rescaling of $u_p$ about the maximum point $x_p^+$ can be studied in a ``canonical'' way, the analysis of the rescaling about $x_p^-$ requires additional arguments. In particular the presence of the nodal line, with an unknown geometry, gives difficulties which, obviously, are not present when dealing with positive solutions or with radial sign changing solutions. Also the proofs of the results for nodal radial solutions of \cite{GrossiGrumiauPacella2} cannot be of any help since they depend strongly on $1$-dimensional estimates.\\

We hope that our results can help to understand better sign changing solutions of other $2$-dimensional nonlinear problems as in \cite{GrossiPistoia}, whose result for $\sinh$-Poisson
equations was inspired by \cite{GrossiGrumiauPacella2}.\\

Finally we would like to point out that these bubble-tower solutions of \eqref{problem} are also interesting in the study of the associated heat flow because they induce a peculiar blow-up phenomenon (see \cite{CazenaveDicksteinWeissler, DicksteinPacellaSciunzi, MarinoPacellaSciunzi}).
\\

We delay to Section \ref{sectionOpenProblems} some further comments and open problems.\\

The outline of the paper is the following. In Section \ref{SectionGeneralAnalysis} we show some results about the asymptotic analysis of sign-changing solutions of \eqref{problem} in general, not necessarily symmetric, domains. In Section \ref{Section:GSymmetricMax} we study the behavior of the solutions around the maximum points, while Section \ref{Section:GSymmetricMin} is devoted to analyze the rescaling about the minimum points.  Finally in Section \ref{sectionOpenProblems} we prove some further properties of the solutions and discuss some open questions.
\

\

\section{Asymptotic analysis in general domains} \label{SectionGeneralAnalysis}

In this section we study the asymptotic behavior of a family $(u_p)_{p>1}$ of sign changing solutions of \eqref{problem} satisfying the energy condition
\begin{equation}
\label{energylimit}
p\int_{\Omega}|\nabla u_p|^2 dx\rightarrow\beta,\ \mbox{ for some } \beta\in\R,\ \mbox{ as }\ p\rightarrow +\infty.
\end{equation}
We follow the approach of \cite{Druet} where positive solutions of semilinear elliptic problems with critical exponential nonlinearities in $2$-dimension were studied.\\

We denote by $E_p$ the energy functional associated to \eqref{problem}, i.e.
\[
E_p(u):=\frac{1}{2}\|\nabla u\|^2_{2}-\frac{1}{p+1}\|u\|_{p+1}^{p+1},\ \ u\in H^1_0(\Omega)
\]
and recall that for a solution $u$  of \eqref{problem}
\begin{equation}
\label{energiaSuSoluzioni}
E_p(u)=(\frac12-\frac1{p+1})\|\nabla u\|^2_2=(\frac12-\frac1{p+1})\|u\|^{p+1}_{p+1}.
\end{equation}

Given a family $(u_p)$ of solutions of \eqref{problem}
and assuming that there exists  $n\in\N\setminus\{0\}$ families of points $(\xip)$, $i=1,\ldots,n$  in $\Omega$ such that
\begin{equation}
\label{muVaAZero}
p|\upp(\xip)|^{p-1}\to+\infty\ \mbox{ as }\ p\to+\infty,
\end{equation}
we define the parameters $\mip$ by
\bel\label{mip}
\mip^{-2}=p |\upp(\xip)|^{p-1},\ \mbox{ for all }\ i=1,\ldots,n.
\eel
By \eqref{muVaAZero} it is clear that $\mip\to0$ as $p\to+\infty$ and that
\begin{equation}\label{RemarkMaxCirca1}
 \forall \epsilon>0 \;\: \exists\, p_{i,\epsilon}\ \mbox{ such that }\
 |u_p(\xip)|\geq 1-\epsilon,\ \ \forall p\geq p_{i,\epsilon}.
\end{equation}
Then we define the concentration set
\bel\label{S}
\mathcal{S}=\left\{\lim_{p\to+\infty}\xip,\,i=1,\ldots,n\right\}\subset\bar\Omega
\eel
and the function
\bel\label{RNp}
R_{n,p}(x)=\min_{i=1,\ldots,n} |x-\xip|, \ \forall x\in\Omega.
\eel

Finally we introduce the following properties:
\begin{itemize}
\item[$(\mathcal{P}_1^n)$] For any $i,j\in\{1,\ldots,n\}$, $i\neq j$,
\[
\lim_{p\to+\infty}\fr{|\xip-\xjp|}{\mip}=+\infty.
\]
\item[$(\mathcal{P}_2^n)$] For any $i=1,\ldots,n$,
\[
v_{i,p}(x):=\fr{p}{\upp(\xip)}(\upp(\xip+\mip x)-\upp(\xip))\To U(x)
\]
in $C^1_{loc}(\R^2)$ as $p\to+\infty$, where
\begin{equation}\label{v0}
U(x)=\log\left(\fr1{1+\fr18 |x|^2}\right)^2
\end{equation}
is the solution of $-\lap U=e^{U}$ in $\R^2$, $U\leq 0$, $U(0)=0$ and $\int_{\mathbb{R}^2}e^{U}=8\pi$.
\item[$(\mathcal{P}_3^n)$] There exists $C>0$ such that
\[
p R_{n,p}(x)^2 |\upp(x)|^{p-1}\leq C
\]
for all $p$ sufficiently large and all $x\in \Omega$.
\end{itemize}

\

\ble\label{lemma:BoundEnergia}
Let $(\upp)$ be a family of solutions to \eqref{problem} satisfying \eqref{energylimit}. Then
\begin{itemize}
\item[\emph{$(i)$}] $\exists C>0$ such that $\|\upp\|_{L^\infty(\Omega)}\leq C,$ for all $p>1$.

\item[\emph{$(ii)$}] If $\upp$ changes sign, then  $\|u_p^\pm\|_{L^\infty(\Omega)}^{p-1}\geq \lambda_1$ where $\lambda_1:=\lambda_1(\Omega)$ is the first eigenvalue of the operator $-\Delta$ in $H^1_0(\Omega)$. In particular for the points $x_p^{\pm}$, where the maximum and the minimum are achieved, the analogous of \eqref{muVaAZero} and \eqref{RemarkMaxCirca1} hold.

\item[\emph{$(iii)$}] If, for $n\in\N\setminus\{0\}$, the properties $(\mathcal{P}_1^n)$ and $(\mathcal{P}_2^n)$ hold for families $(\xip)_{i=1,\ldots,n}$ of points satisfying \eqref{muVaAZero}, then
\[
p\int_\Omega |\na\upp|^2\,dx\geq8\pi\sum_{i=1}^n \alpha_i^2+o_p(1)\ \mbox{ as }p\rightarrow +\infty,
\]
where  $\alpha_i:=\liminf_{p\to+\infty}|\upp(\xip)|$.

\item[\emph{$(iv)$}] $\sqrt{p}u_p\rightharpoonup 0$ in $H^1_{0}(\Omega)$ as $p\rightarrow +\infty$.
\end{itemize}
\ele

\begin{proof} The proof of
\emph{$(i)$} is the same as in Proposition 2.7 of \cite{GrossiGrumiauPacella1}, while the proof of
\emph{$(ii)$} is similar to that of Proposition 2.5 of \cite{GrossiGrumiauPacella1}. To prove \emph{$(iii)$} let us write, for any $R>0$
\begin{eqnarray}\label{appo1}
p\int_{B_{R\mu_{i,p}}(\xip)} |\upp|^{p+1}\,dx&=&\int_{B_R(0)}\fr{|\upp(\xip+\mip y)|^{p+1}}{|\upp(\xip)|^{p-1}}\,dy\nonumber\\
 &=&\int_{B_R(0)}\left|1+\fr{v_{i,p}(y)}{p}\right|^{p+1}\upp^2(\xip)\,dy\ \ \ \
\end{eqnarray}
where $B_{\rho}(a)$ denotes the ball of center $a$ and radius $\rho$.
Thanks to $(\mathcal{P}_2^n)$, we have
\bel\label{appo2}
\|v_{i,p}-U\|_{L^{\infty}(B_R(0))}=o_p(1)\ \mbox{ as }p\rightarrow +\infty.
\eel
Thus by \eqref{appo1}, \eqref{appo2} and Fatou's lemma
\bel\label{appo3}
\liminf_{p\to+\infty}\,\left(p\int_{B_{R\mu_{i,p}}(\xip)}|\upp|^{p+1}\,dx\right)\geq\alpha^2_i\int_{B_R(0)} e^{U}\, dx.
\eel

Moreover by virtue of $(\mathcal{P}_1^n)$ it is not hard to see that $B_{R\mip}(\xip)\cap B_{R\mu_{j,p}}(x_{j,p})=\emptyset$ for all $i\neq j$. Hence, in particular, thanks to \eqref{appo3}
\[
\liminf_{p\to+\infty}\,\left(p\int_\Omega |\upp|^{p+1}\,dx\right)\geq \sum_{i=1}^n \left(\alpha^2_i\int_{B_R(0)} e^{U}\, dx\right).
\]
At last, since this holds for any $R>0$, we get
\[
p\int_\Omega|\na\upp|^2\,dx=p\int_\Omega |\upp|^{p+1}\,dx\geq \sum_{i=1}^n \alpha_i^2 \int_{\R^2}e^{U}\,dx
+o(1)=8\pi\sum_{i=1}^n \alpha_i^2+o(1) \ \mbox{ as }p\rightarrow +\infty.\]

To prove \emph{$(iv)$} let us note that, since \eqref{energylimit} holds,
 there exists $w\in H^1_0(\Omega)$ such that, up to a subsequence,  $\sqrt{p}u_p \rightharpoonup  w$ in $H^1_0(\Omega)$. We want to show that $w=0$ a.e. in $\Omega$.

Using the equation \eqref{problem}, for any test function $\varphi\in C^{\infty}_0(\Omega)$, we have
\begin{eqnarray*}
\int_{\Omega}\nabla (\sqrt{p}u_p)\nabla\varphi \,dx &=& \sqrt{p}\int_{\Omega}|u_p|^{p-1}u_p\varphi\, dx
\\
&\leq &\frac{\|\varphi\|_{\infty}}{\sqrt{p}} p \int_{\Omega}|u_p|^{p}\,dx
\\
&\leq &\frac{\|\varphi\|_{\infty}}{\sqrt{p}} C
\end{eqnarray*}
for $p$ large since, by \eqref{energylimit} and \eqref{energiaSuSoluzioni}, $\int_{\Omega}|u_p|^{p}\,dx \leq \left(\int_{\Omega}|u_p|^{p+1}\,dx\right)^{\frac{p}{p+1}}|\Omega|^{\frac{1}{p+1}}\leq\frac{C}{p}$. Hence
\[
\int_{\Omega}\nabla w\nabla\varphi\,dx=0\quad \forall\varphi\in C^{\infty}_0(\Omega),
\]
which implies that
 $w=0$ a.e. in $\Omega$.
\end{proof}

\

Next proposition gives the main result of this section.

\begin{proposition}\label{prop:x1N}
Let $(\upp)$ be a family of solutions to \eqref{problem} and assume that \eqref{energylimit} holds. Then there exist $k\in\N\setminus\{0\}$ and $k$ families of points $(\xip)$ in $\Omega$  $i=1,\ldots, k$ such that, after passing to a sequence, $(\mathcal{P}_1^k)$, $(\mathcal{P}_2^k)$, and $(\mathcal{P}_3^k)$ hold. Moreover, given any family of points $x_{k+1,p}$, it is impossible to extract a new sequence from the previous one such that $(\mathcal{P}_1^{k+1})$, $(\mathcal{P}_2^{k+1})$, and $(\mathcal{P}_3^{k+1})$ hold with the sequences $(\xip)$, $i=1,\ldots,k+1$. At last, we have
\begin{equation}\label{pu_va_a_zero}
\sqrt{p}\upp\to 0\quad\textrm{ in $C^1_{loc}(\bar\Omega\setminus\mathcal{S})$ as $p\to+\infty$.}
\end{equation}
\end{proposition}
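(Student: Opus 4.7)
The plan is to construct the families $(\xip)$ inductively via the selection scheme of Druet, adapting it to the present sign-changing setting. As a base step I would take $x_{1,p}$ to be a point at which $|\upp|$ attains its $L^\infty$-norm; Lemma \ref{lemma:BoundEnergia}(i)-(ii) yields, up to a subsequence, $|\upp(x_{1,p})|\to\alpha_1\in[1,C]$, so \eqref{muVaAZero} holds with $\mu_{1,p}\to 0$. Assuming WLOG $\upp(x_{1,p})>0$, a direct computation gives that the rescaled $v_{1,p}$ in $(\mathcal{P}_2^n)$ satisfies, on $\widetilde\Omega_{1,p}=(\Omega-x_{1,p})/\mu_{1,p}$,
\[
-\lap v_{1,p}(x)=\mathrm{sign}\!\bigl(1+\tfrac{v_{1,p}(x)}{p}\bigr)\,\bigl|1+\tfrac{v_{1,p}(x)}{p}\bigr|^{p},
\]
with $v_{1,p}(0)=0$, and the maximality of $|\upp(x_{1,p})|$ forces $|1+v_{1,p}/p|\le 1$, i.e.\ $v_{1,p}\le 0$. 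One checks (in the standard way, by contradiction against Liouville-type rigidity on a half-space) that $d(x_{1,p},\partial\Omega)/\mu_{1,p}\to+\infty$, so $\widetilde\Omega_{1,p}$ exhausts $\R^2$; elliptic regularity together with the upper bound $v_{1,p}\le0$ then produces a $C^1_{loc}$ subsequential limit $U\le 0$ solving $-\lap U=e^U$ in $\R^2$ with $U(0)=0$. The uniform energy bound \eqref{energylimit} forces $\int_{\R^2}e^U<+\infty$, and the Chen--Li classification identifies $U$ with \eqref{v0}. This proves $(\mathcal{P}_1^1)$ (trivially) and $(\mathcal{P}_2^1)$.

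For the inductive step, assume $(\mathcal{P}_1^n)$ and $(\mathcal{P}_2^n)$ hold. If $(\mathcal{P}_3^n)$ is true, stop with $k=n$. Otherwise I take $x_{n+1,p}\in\bar\Omega$ realizing the maximum of $R_{n,p}(x)^2|\upp(x)|^{p-1}$; by the failure of $(\mathcal{P}_3^n)$ we have $p R_{n,p}(x_{n+1,p})^2|\upp(x_{n+1,p})|^{p-1}\to+\infty$. Since $\|\upp\|_\infty$ and $\mathrm{diam}(\Omega)$ are uniformly bounded (Lemma \ref{lemma:BoundEnergia}(i)), this forces $p|\upp(x_{n+1,p})|^{p-1}\to+\infty$, so \eqref{muVaAZero} holds for $i=n+1$, and moreover $R_{n,p}(x_{n+1,p})/\mu_{n+1,p}\to+\infty$, which is $(\mathcal{P}_1^{n+1})$. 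Using that $R_{n,p}$ is $1$-Lipschitz, on any $B_R(0)$ we get $R_{n,p}(x_{n+1,p}+\mu_{n+1,p}x)/R_{n,p}(x_{n+1,p})\to 1$, and the maximal choice yields
\[
\bigl|1+v_{n+1,p}(x)/p\bigr|^{p-1}\le 1+o_p(1),
\]
which provides a uniform upper bound on $v_{n+1,p}$ on compacts, reducing the argument to the one used for $i=1$. Termination is immediate: Lemma \ref{lemma:BoundEnergia}(iii) combined with (ii) gives $p\int_\Omega|\na\upp|^2\,dx\ge 8\pi n+o_p(1)$, which together with \eqref{energylimit} forces $n\le\beta/(8\pi)$. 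The maximality claim at the terminal $k$ is then automatic, since any admissible $(x_{k+1,p})$ satisfying $(\mathcal{P}_1^{k+1})$ would produce, via the above selection, exactly the blow-up of $pR_{k,p}(x_{k+1,p})^2|\upp(x_{k+1,p})|^{p-1}$ excluded by $(\mathcal{P}_3^k)$.

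Finally, for \eqref{pu_va_a_zero} I would argue as follows. On any compact $K\subset\bar\Omega\setminus\mathcal{S}$ one has $R_{k,p}(x)\ge c_K>0$ for $p$ large, hence $(\mathcal{P}_3^k)$ gives $p|\upp(x)|^{p-1}\le C/c_K^2$ on $K$, and therefore
\[
\bigl|\lap(\sqrt{p}\,\upp)\bigr|=\sqrt{p}\,|\upp|^{p-1}|\upp|\le \frac{C\,\|\upp\|_\infty}{c_K^2\sqrt{p}}\To 0
\]
uniformly on $K$. Combining this with the weak convergence $\sqrt{p}\,\upp\rightharpoonup 0$ in $H^1_0(\Omega)$ from Lemma \ref{lemma:BoundEnergia}(iv), interior elliptic regularity, and the boundary regularity allowed by $\upp=0$ on the smooth $\partial\Omega$, one obtains $\sqrt{p}\,\upp\to 0$ in $C^1_{loc}(\bar\Omega\setminus\mathcal{S})$. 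The main obstacle I expect to encounter is the passage to the limit in $(\mathcal{P}_2^n)$ when $n\ge 2$: since $\upp$ changes sign, there is no Harnack inequality readily available on $B_R(0)$ to produce a lower bound on $v_{n,p}$, and one must leverage the upper bound coming from the maximal choice of $x_{n,p}$, the normalization $v_{n,p}(0)=0$, and standard elliptic $L^q$ estimates to rule out degeneration of the rescaled profile on pieces of the (unknown) nodal set of $\upp$ entering the ball.
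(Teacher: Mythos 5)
Your scheme is the same Druet-type exhaustion the paper uses, and most of the skeleton is right: the choice of $x_{1,p}$ as a maximum point of $|\upp|$, the selection of $x_{n+1,p}$ as a maximizer of $R_{n,p}(\cdot)^2|\upp(\cdot)|^{p-1}$, the bound $|1+v_{n+1,p}/p|^{p-1}\le 1+o_p(1)$ coming from that maximal choice (which indeed yields both $\limsup\sup v_{n+1,p}\le 0$ and the two-sided Laplacian bound $|\lap v_{n+1,p}|\le 1+o_p(1)$, more economically than the paper's case analysis), the termination count via Lemma \ref{lemma:BoundEnergia}(iii), and the derivation of \eqref{pu_va_a_zero} from $(\mathcal{P}_3^k)$. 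However, there is a genuine gap exactly where you flag "the main obstacle": you never establish the local \emph{lower} bound on $v_{i,p}$, i.e.\ that the rescaled profiles do not degenerate to $-\infty$ on compact sets (and the same device is also what proves $\widetilde\Omega_{i,p}\to\R^2$, which you delegate to an unspecified "Liouville-type rigidity on a half-space"). The paper closes this by writing $v_{i,p}=\varphi_p+\psi_p$ with $-\lap\varphi_p=-\lap v_{i,p}$ and $\psi_p$ harmonic: $\varphi_p$ is uniformly bounded by elliptic theory from the Laplacian bound, $\psi_p$ is harmonic and bounded above (by \eqref{vNeg} plus the bound on $\varphi_p$), so Harnack applied to the \emph{harmonic part} forces either uniform boundedness or locally uniform divergence to $-\infty$; the latter is excluded because $\psi_p(0)=-\varphi_p(0)\ge -C$. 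The sign-changing character of $\upp$ is irrelevant to this step, since Harnack is never applied to $\upp$ or $v_{i,p}$ itself; your worry that "there is no Harnack inequality readily available" is thus resolved by the decomposition, but as written your proof does not contain the argument.

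A second, smaller omission: in the inductive step you assert that $R_{n,p}(x_{n+1,p})/\mu_{n+1,p}\to+\infty$ "is $(\mathcal{P}_1^{n+1})$", but that only gives $|x_{i,p}-x_{n+1,p}|/\mu_{n+1,p}\to+\infty$. The property $(\mathcal{P}_1^{n+1})$ also requires $|x_{i,p}-x_{n+1,p}|/\mu_{i,p}\to+\infty$ for $i\le n$, which the paper proves separately by contradiction: if this ratio stayed bounded by some $R$, then $(\mathcal{P}_2^n)$ would give $p|x_{i,p}-x_{n+1,p}|^2|\upp(x_{n+1,p})|^{p-1}\to R^2\bigl(1+\tfrac18 R^2\bigr)^{-2}<+\infty$, contradicting the blow-up of the supremum in the definition of $x_{n+1,p}$. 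You should add this argument.
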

\begin{proof}
We mainly follow the proof of Proposition 1 of \cite{Druet}, but we have to deal with an extra-difficulty because we allow the solutions $\upp$ to be sign-changing. We divide the proof in several steps and all the claims are up to a subsequence.

\

\textit{\underline{Step 1.}\\ There exists a family $(\xip)$ of points in $\Omega$ such that, after passing to a sequence $(\mathcal{P}^1_2)$ holds.}

\

\textit{\underline{Proof of Step 1}}\\
We let $x_{1,p}$ be a point in $\Omega$ where $|\upp|$ achieves its maximum. Without loss of generality we can assume that
\bel\label{x1p}
\upp(x_{1,p})=\max_\Omega \upp>0.
\eel
By Lemma \ref{lemma:BoundEnergia} \emph{(ii)} we have
$$
p\upp(x_{1,p})^{p-1}\to+\infty\quad\textrm{as $p\to+\infty$},
$$
so that, defining (as in \eqref{mip})
\[
\mu_{1,p}^{-2}=p\upp^{p-1}(x_{1,p}),
\]
we have that $\mu_{1,p}\to 0$. Let
\[
\widetilde{\Omega}_{1,p}=\frac{\Omega - x_{1,p}}{\mu_{1,p}}=\{x\in\R^2\,:\,x_{1,p}+\mu_{1,p}x\in\Omega\}
\]
and for $x\in\widetilde{\Omega}_{1,p}$
\[
v_{1,p}(x)=\fr{p}{\upp(x_{1,p})}(\upp(x_{1,p}+\mu_{1,p}x)-\upp(x_{1,p})).
\]
By \eqref{x1p}, we have
\begin{equation}\label{b}
v_{1,p}(0)=0\quad\textrm{and}\quad v_{1,p}\leq 0\textrm{  in $\widetilde{\Omega}_{1,p}$}
\end{equation}
moreover $v_{1,p}$ solves
\bel\label{lapv1p}
-\lap v_{1,p}=\left|1+\frac{v_{1,p}}{p}\right|^p\left(1+\frac{v_{1,p}}{p}\right)\quad\textrm{in $\widetilde{\Omega}_{1,p}$},
\eel
with \[\left|1+\frac{v_{1,p}}{p}\right|\leq 1\]
and \[v_{1,p}=-p\quad\textrm{on $\partial\widetilde{\Omega}_{1,p}$.}\]
Then
\bel\label{boundlap1}
|-\lap v_{1,p}|\leq 1\quad\textrm{in $\widetilde{\Omega}_{1,p}$}.
\eel
Using  \eqref{b} and \eqref{boundlap1} we prove that
\begin{equation}\label{tuttoR2}
\widetilde{\Omega}_{1,p}\to\R^2\ \mbox{ as }p\to+\infty.
\end{equation}Indeed since $\mu_{1,p}\rightarrow 0$ as $p\rightarrow + \infty$, either $\widetilde{\Omega}_{1,p}\rightarrow\mathbb R^2$ or $\widetilde{\Omega}_{1,p}\rightarrow\mathbb R\times ]-\infty, R[$ as $p\rightarrow +\infty$ for some $R\geq 0$ (up to a rotation). In the second case we let
\[
v_{1,p}=\varphi_p+\psi_p \ \mbox{ in }\widetilde{\Omega}_{1,p}\cap B_{2R+1}(0)
\]
with
$-\lap\varphi_p=-\lap v_{1,p}$ in $\widetilde{\Omega}_{1,p}\cap B_{2R+1}(0)$ and $\psi_p= v_{1,p}$ in $\partial\left(\widetilde{\Omega}_{1,p}\cap B_{2R+1}(0)\right)$.

Thanks to \eqref{boundlap1}  we have, by standard elliptic theory, that $\varphi_p$ is uniformly bounded in $\widetilde{\Omega}_{1,p}\cap B_{2R+1}(0)$. The function $\psi_p$ is harmonic in $\widetilde{\Omega}_{1,p}\cap B_{2R+1}(0)$, bounded from above by \eqref{b} and satisfies  $\psi_p=-p\rightarrow -\infty$ on $\partial\widetilde{\Omega}_{1,p}\cap B_{2R+1}(0)$. Since $\partial\widetilde{\Omega}_{1,p}\cap B_{2R+1}(0)\rightarrow (\mathbb{R}\times\{R\})\cap B_{2R+1}(0)$ as $p\rightarrow +\infty$  one easily gets that $\psi_p(0)\rightarrow -\infty$ as $p\rightarrow +\infty$ (if $R=0$ this is trivial, if $R>0$ it follows by Harnack inequality). This is  a contradiction since $\psi_p(0)=-\varphi_p(0)$ and $\varphi_p$ is bounded, hence \eqref{tuttoR2} is proved.
\\

Then for any $R>0$, $B_R(0)\subset\widetilde\Omega_{1,p}$ for $p$ sufficiently large. So $(v_{1,p})$ is a family of nonpositive functions with uniformly bounded Laplacian in $B_R(0)$ and with $v_{1,p}(0)=0$.

Thus, arguing as before, we write $v_{1,p}=\varphi_p+\psi_p $ where $\varphi_p$ is uniformly bounded in $B_R(0)$  and $\psi_p $ is an harmonic function which is uniformly bounded from above. By Harnack inequality, either $\psi_p $ is uniformly bounded in $B_R(0)$ or it tends to $-\infty$ on each compact set of $B_R(0)$.
The second alternative cannot happen because, by definition, $\psi_p(0)=v_{1,p}(0)-\varphi_p(0) =-\varphi_p(0)\geq -C$.
Hence we obtain that $v_{1,p}$ is uniformly bounded in $B_R(0)$, for all $R>0$. After passing to a subsequence, standard elliptic theory gives that $v_{1,p}$ is bounded in $ C^2_{loc}(\mathbb R^2)$ and, on each ball, $1+\frac{v_{1,p}}{p}>0$ for $p$ large. Thus
\bel\label{v1pv0}
v_{1,p}\to U\quad\textrm{in $C^1_{loc}(\R^2)$ as $p\to+\infty$},
\eel
with $U\in C^1(\R^2)$, $U\leq0$ and $U(0)=0$. Thanks to \eqref{lapv1p} and \eqref{v1pv0} we get
that $U$ is a solution of
\[
-\lap U=e^{U}\quad\textrm{in $\R^2$.}
\]
Moreover for any $R>0$, by \eqref{RemarkMaxCirca1}, we have
\begin{eqnarray*}
\int_{B_R(0)}e^{U}dx&\stackrel{\eqref{v1pv0}\,+\,\textrm{{Fatou}}}{\leq} &\int_{B_R(0)}\fr{|\upp(x_{1,p}+\mu_{1,p}x)|^{p+1}}{\upp^{p+1}(x_{1,p})}dx+o_p(1)\\
&=&\fr p{\upp^2(x_{1,p})}\int_{B_{R\mu_{1,p}}(x_{1,p})}|\upp(y)|^{p+1}dy+o_p(1)\\
&\stackrel{\eqref{RemarkMaxCirca1}}{\leq}&\fr p{(1-\ep)^2}\int_{B_{R\mu_{1,p}}(x_{1,p})}|\upp(y)|^{p+1} dy+o_p(1)\\
&\stackrel{\eqref{energylimit}}\leq&\fr p{(1-\ep)^2}\int_{\Omega}|\upp(y)|^{p+1} dy+o_p(1)<C<+\infty,
\end{eqnarray*}
so that $e^{U}\in L^1(\R^2)$. Thus, since $U(0)=0$, by virtue of the classification due to Chen and Li \cite{ChenLi} we obtain
\[
U(x)=\log\left(\fr1{1+\fr18 |x|^2}\right)^2.
\]
Then an easy computation shows that $\int_{\mathbb{R}^2}e^{U}=8\pi$.  This ends the proof of \textit{Step 1}.

\

\

\textit{\underline{Step 2.}\\ Assume that $(\mathcal{P}_1^n)$ and $(\mathcal{P}_2^n)$ hold for some $n\in\N\setminus\{0\}$. Then either $(\mathcal{P}_1^{n+1})$ and $(\mathcal{P}_2^{n+1})$ hold or $(\mathcal{P}_3^n)$ holds, namely there exists $C>0$ such that
$$
p R_{n,p}(x)^2 |\upp(x)|^{p-1}\leq C
$$
for all $p$ sufficiently large and all $x\in\Omega$, with $R_{n,p}$  defined as in \eqref{RNp}.
}

\

\textit{\underline{Proof of Step 2}}\\
Let $n\in\N\setminus\{0\}$ and assume that $(\mathcal{P}_1^n)$ and $(\mathcal{P}_2^n)$ hold while
\bel\label{Pknonvale}
\sup_{x\in\Omega}\left(p R_{n,p}(x)^2 |\upp(x)|^{p-1}\right)\to+\infty\quad\textrm{as $p\to+\infty$}.
\eel
We must prove that $(\mathcal{P}_1^{n+1})$ and $(\mathcal{P}_2^{n+1})$ hold. We let $x_{n+1,p}\in\bar\Omega$ be such that
\bel\label{xk+1}
p R_{n,p}(x_{n+1,p})^2 |\upp(x_{n+1,p})|^{p-1}=\sup_{x\in\Omega}\left(p R_{n,p}(x)^2 |\upp(x)|^{p-1}\right).
\eel
Clearly $x_{n+1,p}\in\Omega$ because $\upp=0$ on $\partial \Omega$.
By \eqref{xk+1} and since $\Omega$ is bounded it is clear that
$$
p|\upp(x_{n+1,p})|^{p-1}\to+\infty\quad\textrm{as $p\to+\infty.$}
$$
We claim that
\bel\label{claimxk+1}
\fr{|\xip-x_{n+1,p}|}{\mu_{i,p}}\to+\infty\quad\textrm{as $p\to+\infty$}
\eel
for all $i=1,\ldots,n$ and $\mu_{i,p}$ as in \eqref{mip}. In fact, assuming by contradiction that there exists $i\in\{1,\ldots,n\}$ such that $|\xip-x_{n+1,p}|/\mip\to R$ as $p\to+\infty$ for some $R\geq0$, thanks to $(\mathcal{P}_2^n)$, we get
\[
\lim_{p\to+\infty}p|\xip-x_{n+1,p}|^2 |\upp(x_{n+1,p})|^{p-1}= R^2\left(\fr{1}{1+\fr18R^2}\right)^2<+\infty,
\]
against \eqref{xk+1}. \\
Setting
\bel\label{mk+1}
(\mu_{n+1,p})^{-2}=p |\upp(x_{n+1,p})|^{p-1}.
\eel
by \eqref{Pknonvale} and \eqref{xk+1} we deduce that
\bel\label{Rkmk+1}
\fr{R_{n,p}(x_{n+1,p})}{\mu_{n+1,p}}\to+\infty\quad\textrm{as $p\to+\infty$.}
\eel
Then \eqref{mk+1}, \eqref{Rkmk+1} and $(\mathcal{P}_1^n)$ imply that $(\mathcal{P}_1^{n+1})$ holds with the added sequence $(x_{n+1,p})$. Indeed we define the rescaled domain
\[
\widetilde{\Omega}_{n+1,p}=\{x\in\R^2\,:\, x_{n+1,p}+\mu_{n+1,p}x\in\Omega\},
\]
and, for $x\in\widetilde{\Omega}_{n+1,p}$, the rescaled functions
\bel\label{vk+1p}
v_{n+1,p}(x)=\fr{p}{\upp(x_{n+1,p})}(\upp(x_{n+1,p}+\mu_{n+1,p}x)-\upp(x_{n+1,p})),
\eel
which, by \eqref{problem}, satisfy
\bel\label{lapvk+1p1}
-\lap v_{n+1,p}(x)=\fr{|\upp(x_{n+1,p}+\mu_{n+1,p}x)|^{p-1}\upp(x_{n+1,p}+\mu_{n+1,p}x)}{|\upp(x_{n+1,p})|^{p-1}\upp(x_{n+1,p})}\qquad\textrm{in $\widetilde{\Omega}_{n+1,p}$},
\eel
or equivalently
\bel\label{lapvk+1p2}
-\lap v_{n+1,p}(x)=\left|1+\fr{v_{n+1,p}(x)}{p}\right|^{p-1}\left(1+\fr{v_{n+1,p}(x)}{p}\right)\qquad\textrm{in $\widetilde{\Omega}_{n+1,p}$}.
\eel
Fix $R>0$ and let $(z_{p})$ be any point in $\widetilde{\Omega}_{n+1,p}\cap B_R(0)$, whose corresponding points in $\Omega$ is
$$
x_p=x_{n+1,p}+\mu_{n+1,p} z_p.
$$
Thanks to the definition of $x_{n+1,p}$ we have
\begin{equation}\label{r}
p R_{n,p}(x_p)^2|\upp(x_p)|^{p-1}\leq p R_{n,p}(x_{n+1,p})^2|\upp(x_{n+1,p})|^{p-1}.
\end{equation}
Since $|x_p-x_{n+1,p}|\leq R\mu_{n+1,p}$ we have
\begin{eqnarray*}
R_{n,p}(x_p)&\geq&\min_{i=1,\ldots,n}|x_{n+1,p}-\xip|-|x_p-x_{n+1,p}|\\
&\geq&R_{n,p}(x_{n+1,p})-R\mu_{n+1,p};
\end{eqnarray*}
and, analogously,
$$
R_{n,p}(x_p)\leq R_{n,p}(x_{n+1,p})+R\mu_{n+1,p}.
$$
Thus, by \eqref{Rkmk+1} we get
$$
R_{n,p}(x_p)=(1+o(1))R_{n,p}(x_{n+1,p})
$$
and in turn from \eqref{r}
\bel\label{1+o1}
|\upp(x_p)|^{p-1}\leq(1+o(1))|\upp(x_{n+1,p})|^{p-1}.
\eel

In the following we show that for any compact subset $K$ of $\R^2$
\begin{equation}\label{boundLaplaciano}
-1+o(1)\leq-\lap v_{n+1,p}\leq 1+o(1)
\qquad\textrm{in $\widetilde{\Omega}_{n+1,p}\cap K$}
\end{equation}
and
\begin{equation}\label{vNeg}
\limsup_{p\to+\infty}\sup_{\widetilde \Omega_{n+1,p}\cap K}v_{n+1,p}\leq0,
\end{equation}.
In order to prove \eqref{boundLaplaciano} and \eqref{vNeg} we will distinguish different cases.\\

\begin{itemize}
\item[$(i)$] \emph{Assume that $v_{n+1,p}(z_p)\geq0$.}\\
If $\upp(x_{n+1,p})>0$ then $\upp(x_p)=\fr{\upp(x_{n+1,p})}{p}v_{n+1,p}(z_p)+\upp(x_{n+1,p})\geq\upp(x_{n+1,p})>0,
$
while if $\upp(x_{n+1,p})<0$ then
$
\upp(x_p)=\fr{\upp(x_{n+1,p})}{p}v_{n+1,p}(z_p)+\upp(x_{n+1,p})\leq\upp(x_{n+1,p})<0.
$
So in both cases
\[
\frac{\upp(x_p)}{\upp(x_{n+1,p})}=\frac{|\upp(x_p)|}{|\upp(x_{n+1,p})|}
\]
By \eqref{1+o1} we get $|\upp(x_p)|^p\leq(1+o(1))|\upp(x_{n+1,p})|^p$ and so by \eqref{lapvk+1p1}
\begin{equation}\label{firstHalfLapl}
(0\leq)-\lap v_{n+1,p}(z_p)=\frac{|\upp(x_p)|^p}{|\upp(x_{n+1,p})|^p}
\leq 1+o(1).
\end{equation}
Moreover, since \eqref{lapvk+1p2} implies $-\lap v_{n+1,p}(z_p)=e^{v_{n+1,p}(z_p)}+o(1)$, we get
\[
\limsup_{p\to+\infty}v_{n+1,p}(z_p)\leq 0.
\]
By the arbitrariness of $z_p$ we obtain
\eqref{vNeg}.\\
\item[$(ii)$] \emph{Assume that $v_{n+1,p}(z_p)\leq0$.} We distinguish two cases.
\begin{itemize}
\item[$1.$]{\emph{$\upp(x_{n+1,p})>0$.}}\\
Then
$
\upp(x_p)=\fr{\upp(x_{n+1,p})}p v_{n+1,p}(z_p)+\upp(x_{n+1,p})\leq \upp(x_{n+1,p}).
$
So either \begin{itemize}
\item[$1.1$]$\upp(x_p)\geq0$ and then $(0\leq) -\lap v_{n+1,p}(z_p)\leq1$, or
\item[$1.2$]$\upp(x_p)<0$ and then by \eqref{1+o1} $$0\geq-\lap v_{n+1,p}(z_p)=-\fr{|\upp(x_p)|^{p}}{\upp(x_{n+1,p})^p}\geq-1+o(1).$$
\end{itemize}
\item[$2.$]{\emph{$\upp(x_{n+1,p})<0$.}}\\ Then
$
\upp(x_p)=\fr{\upp(x_{n+1,p})}p v_{n+1,p}(z_p)+\upp(x_{n+1,p})\geq \upp(x_{n+1,p}).
$
So either \begin{itemize}
\item[$2.1$]$\upp(x_p)\leq0$ and then $(0\leq) -\lap v_{n+1,p}(z_p)\leq1$, or
\item[$2.2$]$\upp(x_p)>0$ and then by \eqref{1+o1} $$0\geq-\lap v_{n+1,p}(z_p)=-\fr{\upp(x_p)^{p}}{|\upp(x_{n+1,p}|)^p}\geq-1+o(1).$$
\end{itemize}
\end{itemize}
In the end in both case $1$ and case $2$ we have proved that, as $p\rightarrow +\infty$
\begin{equation}\label{firstHalfLap2}
-1+o(1)\leq-\lap v_{n+1,p}(z_p)\leq 1+o(1)
\end{equation}
\end{itemize}
Putting together \eqref{firstHalfLapl} and \eqref{firstHalfLap2} it follows that \eqref{boundLaplaciano} holds.
\\
\\
\\

Using \eqref{boundLaplaciano} and \eqref{vNeg} we can prove, similarly as in \textit{Step 1}, that
\begin{equation}\widetilde{\Omega}_{n+1,p}\rightarrow\mathbb R^2\ \mbox{ as }p\rightarrow +\infty.
\end{equation}

Indeed, since $\mu_{n+1,p}\rightarrow 0$ as $p\rightarrow + \infty$, either $\widetilde{\Omega}_{n+1,p}\rightarrow\mathbb R^2$ or $\widetilde{\Omega}_{n+1,p}\rightarrow\mathbb R\times ]-\infty, R[$ as $p\rightarrow +\infty$ for some $R\geq 0$ (up to a rotation). In the second case we let
\[
v_{n+1,p}=\varphi_p+\psi_p \ \mbox{ in }\widetilde{\Omega}_{n+1,p}\cap B_{2R+1}(0)
\]
with
$-\lap\varphi_p=-\lap v_{n+1,p}$ in $\widetilde{\Omega}_{n+1,p}\cap B_{2R+1}(0)$ and $\psi_p= v_{n+1,p}$ in $\partial\left(\widetilde{\Omega}_{n+1,p}\cap B_{2R+1}(0)\right)$.

Thanks to \eqref{boundLaplaciano},  since $\varphi_p= v_{n+1,p}$ in $\partial\left(\widetilde{\Omega}_{n+1,p}\cap B_{2R+1}(0)\right)$, we have, by standard elliptic theory, that $\varphi_p$ is uniformly bounded in $\widetilde{\Omega}_{n+1,p}\cap B_{2R+1}(0)$. The function $\psi_p$ is  harmonic  in $\widetilde{\Omega}_{n+1,p}\cap B_{2R+1}(0)$, bounded from above by \eqref{vNeg} and satisfies  $\psi_p=-p\rightarrow -\infty$ on $\partial\widetilde{\Omega}_{n+1,p}\cap B_{2R+1}(0)$. Since $\partial\widetilde{\Omega}_{n+1,p}\cap B_{2R+1}(0)\rightarrow (\mathbb{R}\times\{R\})\cap B_{2R+1}(0)$ as $p\rightarrow +\infty$   one easily gets that $\psi_p(0)\rightarrow -\infty$ as $p\rightarrow +\infty$ (if $R=0$ this is trivial, if $R>0$ it follows by Harnack inequality). This is  a contradiction since $\psi_p(0)=-\varphi_p(0)$ and $\varphi_p$ is bounded. Therefore the limit domain of $\widetilde{\Omega}_{n+1,p}$ is the whole $\R^2$.
\\
\\
\\
Then for any $R>0$, $B_R(0)\subset\widetilde{\Omega}_{n+1,p}$ for $p$ large enough and $v_{n+1,p}$ are  functions with uniformly bounded laplacian in $B_R(0)$ and with $v_{n+1,p}(0)=0$. So, by Harnack inequality, $v_{n+1,p}$ is uniformly bounded in $B_R(0)$ for all $R>0$ and then $v_{n+1,p}\to U$ in $C^1_{loc}(\R^2)$ as $p\to+\infty$ with $U\in C^1(\R^2)$,  $U(0)=0$ and, by \eqref{vNeg}, $U\leq0$.\\
Passing to the limit in \eqref{lapvk+1p2} we get
$$
-\lap v_{n+1,p}(x)\to e^{U(x)}\quad  \textrm{ as $p\to+\infty$},
$$
and so $-\lap U=e^{U}$ in $\R^2$.\\
Next for any $R>0$
$$
\int_{B_R(0)}e^{U}\,dx\leq p\int_{B_{R\mu_{n+1,p}}(x_{n+1,p})}\upp\lap\upp\,dx+o_p(1)\leq p\int_{\Omega}|\na \upp|^2\,dx+o_p(1),
$$
so that $e^{U}\in L^1(\R^2)$. By \cite{ChenLi} and $v_{n+1,p}(0)=0$ we have $U(x)=\log\left(\fr1{1+\fr18 x^2}\right)^2$.

This proves that $(\mathcal{P}_2^{n+1})$ holds with the added points $(x_{n+1,p})$, thus \emph{Step 2} is proved.

\

\

\

\textit{\underline{Step 3.} \\
Proof of Proposition \ref{prop:x1N} completed.}

\

\textit{\underline{Proof of Step 3}}\\
From \emph{Step 1.} we have that $(\mathcal{P}_1^1)$ and $(\mathcal{P}_2^1)$ hold. Then, by \emph{Step 2}, either $(\mathcal{P}_1^2)$ and $(\mathcal{P}_2^2)$ hold or
$(\mathcal{P}_3^1)$ holds. In the last case the assertion is proved with $k=1$. In the first case we go on and proceed with the same alternative until we reach a number $k\in\N\setminus\{0\}$ for which $(\mathcal{P}_1^{k})$, $(\mathcal{P}_2^{k})$ and $(\mathcal{P}_3^{k})$ hold up to a sequence. Note that this is possible because the solutions $u_p$ satisfy  \eqref{energylimit} and Lemma \ref{lemma:BoundEnergia} hols and hence the maximal number $k$ of families of points for which
$(\mathcal{P}_1^{k})$, $(\mathcal{P}_2^{k})$ hold must be finite.

\

Moreover, given any other family of points $x_{k+1,p}$, it is impossible to extract a new sequence from it such that $(\mathcal{P}_1^{k+1})$, $(\mathcal{P}_2^{k+1})$ and $(\mathcal{P}_3^{k+1})$ hold together with the points $(x_{i,p})_{i=1,..,k+1}$. Indeed if $(\mathcal{P}_1^{k+1})$ was verified then $$\frac{|x_{k+1,p}-\xip|}{\mu_{k+1,p}}\to+\infty\qquad\textrm{as $p\to+\infty$,\quad for any $i\in\{1,\ldots,k\}$},$$
but this would contradict $(\mathcal{P}_3^k)$.

\

Finally the proof of \eqref{pu_va_a_zero} is a direct consequence of $(\mathcal{P}^k_3)$. Indeed if $K$ is a compact subset of $\bar\Omega\setminus\mathcal S$ by $(\mathcal{P}^k_3)$ we have that there exists $C_K>0$ such that
$$
p|\upp(x)|^{p-1}\leq C_K\qquad\textrm{for all $x\in K$ and $p$ sufficiently large.}
$$
Then by  \eqref{problem} $\|\lap(\sqrt{p} \upp)\|_{L^\infty(K)}\leq C_K\frac{\|u_p\|_{L^\infty(K)}}{\sqrt{p}}\to0$ uniformly in $K$, as $p\to+\infty$. Hence standard elliptic theory shows that $\sqrt{p}u_p\to w$ in $C^1(K)$, for some $w$. But by  Lemma \ref{lemma:BoundEnergia}-(iv) we know that
$\sqrt{p}u\rightharpoonup 0$, so $w=0$. This ends the proof.
\end{proof}

\

\

We conclude this section showing some consequences of Proposition \ref{prop:x1N}.

\

\begin{remark}\label{rem:nonvedobordo}
Under the assumptions of Proposition \ref{prop:x1N} we have
$$
\fr{dist(\xip,\partial\Omega)}{\mip}\stackrel{p\to+\infty}{\to}+\infty,\qquad\textrm{for all $i\in\{1,\ldots,k\}$}.
$$
\end{remark}
\begin{corollary}\label{cor:nonvedoNL}
Under the assumptions of Proposition \ref{prop:x1N} if $u_p$ is sign-changing it follows that
$$
\fr{dist(\xip,NL_p)}{\mip}\stackrel{p\to+\infty}{\to}+\infty,\qquad\textrm{for all $i\in\{1,\ldots,k\}$}
$$
where, as in the Section \ref{section:intro},  $NL_p$ denotes the  nodal line of $\upp$.\\

As a consequence, for any $i\in\{1,\ldots,k\}$, letting $\mathcal{N}_{i,p}\subset\Omega$ be the nodal domain of $u_p$ containing $x_{i,p}$ and setting $u_p^i:=u_p\chi_{\mathcal{N}_{i,p}}$ ($\chi_A$ is the characteristic function of the set $A$),
then the scaling of $u_p^i$ around $x_{i,p}$:
\[
z_{i,p}(x):=\fr{p}{\upp(\xip)}(\upp^i(\xip+\mip x)-\upp(\xip)),
\]
defined on $\widetilde{\mathcal{N}}_{i,p}:=\frac{\mathcal{N}_{i,p}-x_{i,p}}{\mu_{i,p}}$, converges to $U$ in $C^1_{loc}(\mathbb R^2)$, where $U$ is the same function defined in $(\mathcal{P}_2^k)$.
\end{corollary}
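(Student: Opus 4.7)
The plan is to proceed in two steps, the first being the distance estimate and the second being the convergence of the restricted scaling, which is essentially a consequence of the first.

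\textbf{Step 1 (distance to nodal line).} I would argue by contradiction. Suppose that, along some subsequence, $\text{dist}(\xip,NL_p)/\mip$ stays bounded; then we can pick $y_p\in NL_p$ with $y_p=\xip+\mip\xi_p$ and $|\xi_p|\leq C$, so that, up to extracting, $\xi_p\to\xi_0\in\R^2$. Since by definition $u_p(y_p)=0$, the rescaled function satisfies
\[
v_{i,p}(\xi_p)=\fr{p}{\upp(\xip)}\bigl(\upp(y_p)-\upp(\xip)\bigr)=-p.
\]
But $(\mathcal{P}_2^k)$ gives $v_{i,p}\to U$ in $C^1_{loc}(\R^2)$, and since $\xi_p\to\xi_0$, one has $v_{i,p}(\xi_p)\to U(\xi_0)\in\R$, contradicting $v_{i,p}(\xi_p)=-p\to-\infty$.

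\textbf{Step 2 (convergence of $z_{i,p}$).} Fix $R>0$. By the just proven estimate together with Remark \ref{rem:nonvedobordo}, for $p$ sufficiently large the ball $B_{R\mip}(\xip)$ lies inside $\Omega$ and does not meet $NL_p$; since $\xip$ belongs to the nodal domain $\mathcal{N}_{i,p}$, connectedness forces $B_{R\mip}(\xip)\subset\mathcal{N}_{i,p}$. Consequently $u_p^i\equiv u_p$ on this ball, and therefore $z_{i,p}(x)=v_{i,p}(x)$ for every $x\in B_R(0)$ and all $p$ large. In particular $B_R(0)\subset\widetilde{\mathcal{N}}_{i,p}$ for large $p$, so $\widetilde{\mathcal{N}}_{i,p}$ exhausts $\R^2$. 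The $C^1_{loc}(\R^2)$ convergence $z_{i,p}\to U$ then follows immediately from $(\mathcal{P}_2^k)$ and the arbitrariness of $R$.

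The only non-routine point is Step 1; once it is settled, Step 2 is pure bookkeeping, relying on the fact that the cut-off by $\chi_{\mathcal{N}_{i,p}}$ has no effect on any fixed compact set of the rescaled picture because the nodal line has escaped to infinity. The conceptual key is simply that $u_p$ vanishes on $NL_p$, so a nodal point at bounded rescaled distance from $\xip$ would force the finite value $U(\xi_0)$ to equal $-\infty$, which is impossible.
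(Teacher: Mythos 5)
Your proof is correct and follows essentially the same route as the paper: the contradiction in Step 1 (a nodal point $y_p$ at bounded rescaled distance forces $v_{i,p}$ to take the value $-p$ at a point converging in $\R^2$, contradicting the finite limit $U(\xi_0)$ given by $(\mathcal{P}_2^k)$) is exactly the paper's argument. Your Step 2 just makes explicit the bookkeeping that the paper leaves implicit in the phrase ``as a consequence,'' and it is correct.
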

\begin{proof}
Let us suppose by contradiction that
$$
\fr{dist(\xip,NL_p)}{\mip}\stackrel{p\to+\infty}{\to}\ell\geq0,
$$
then there exist $y_p\in NL_p$ such that $\fr{|\xip-y_p|}{\mip}\to\ell$.
Setting
$$
v_{i,p}(x)=\fr{p}{\upp(\xip)}(\upp(\xip+\mip x)-\upp(\xip)),
$$
on the one hand
$$
v_{i,p}(\fr{y_p-\xip}{\mip})=-p\stackrel{p\to+\infty}{\to}-\infty,
$$
on the other hand by $(\mathcal{P}_2^k)$ and up to subsequences
$$
v_{i,p}(\fr{y_p-\xip}{\mip})\stackrel{p\to+\infty}{\to}U(x_{\infty})>-\infty,
$$
where $x_\infty=\lim_{p\rightarrow +\infty} \fr{y_p-\xip}{\mip}\in\R^2$ and so $|x_\infty|=\ell$. Thus we have obtained a contradiction which proves the assertion.
\end{proof}

\

\

\begin{proposition}\label{lemma:rapportoMuBounded} Let $(x_p)_p\subset\Omega$ be a family of points such that  $p |u_p(x_p)|^{p-1}\rightarrow +\infty$ and define $\mu_p$  in the usual way through $\mu_p^{-2}:=p |u_p(x_p)|^{p-1}$.
By $(\mathcal{P}_3^{k})$, up to a sequence, $R_{k,p}(x_p)=|x_{i,p}-x_p|$, for a certain  $i\in\{1,\dots,k\}$. Then, for $p$ large
\[
\frac{\mu_{i,p}}{\mu_p}\leq  1.
\]
\end{proposition}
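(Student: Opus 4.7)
The plan is to translate the inequality into the pointwise statement $|u_p(x_{i,p})|\geq|u_p(x_p)|$ (which, by the definitions $\mu_{i,p}^{-2}=p|u_p(x_{i,p})|^{p-1}$ and $\mu_p^{-2}=p|u_p(x_p)|^{p-1}$, is exactly equivalent to $\mu_{i,p}\leq\mu_p$) and then to read it off from the bubble profile around $x_{i,p}$ given by $(\mathcal{P}_2^{k})$. First I would apply $(\mathcal{P}_3^{k})$ at the point $x_p$: since $R_{k,p}(x_p)=|x_{i,p}-x_p|$, it yields $p|x_p-x_{i,p}|^{2}|u_p(x_p)|^{p-1}\leq C$, i.e.\ $|x_p-x_{i,p}|\leq\sqrt{C}\,\mu_p$. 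Setting $y_p:=(x_p-x_{i,p})/\mu_{i,p}$, this rewrites as the key bound $|y_p|\leq\sqrt{C}\,\mu_p/\mu_{i,p}$, and from the very definition of $v_{i,p}$ one has $u_p(x_p)=u_p(x_{i,p})(1+v_{i,p}(y_p)/p)$, from which the working identity
\[
\Bigl(\frac{\mu_{i,p}}{\mu_p}\Bigr)^{2}=\frac{|u_p(x_p)|^{p-1}}{|u_p(x_{i,p})|^{p-1}}=\Bigl|1+\frac{v_{i,p}(y_p)}{p}\Bigr|^{p-1}
\]
follows.

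I would then complete the proof by a trichotomy on the limit, along an arbitrary subsequence, of $\rho_p:=\mu_{i,p}/\mu_p$. If $\rho_p\to 0$ the inequality is trivial. If $\rho_p\to\lambda\in(0,+\infty)$, the key bound gives $|y_p|\leq 2\sqrt{C}/\lambda$ for $p$ large, so $y_p$ stays in a fixed compact set of $\R^{2}$; the $C^{1}_{loc}$ convergence $v_{i,p}\to U$ from $(\mathcal{P}_2^{k})$ together with the continuity of $U$ then gives, up to a further subsequence, $v_{i,p}(y_p)\to U(y_\infty)\leq 0$, and passing to the limit in the working identity via the elementary fact that $(1+a_p/p)^{p-1}\to e^{a}$ whenever $a_p\to a\in\R$ yields $\lambda^{2}=e^{U(y_\infty)}\leq 1$, hence $\lambda\leq 1$. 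If instead $\rho_p\to+\infty$, the key bound forces $y_p\to 0$, so $v_{i,p}(y_p)\to U(0)=0$, and the working identity gives $\rho_p^{2}\to 1$, contradicting $\rho_p\to+\infty$. All in all $\limsup_{p\to+\infty}\rho_p\leq 1$, which is the claimed estimate.

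The only delicate point in this argument is the passage to the limit in $|1+v_{i,p}(y_p)/p|^{p-1}$; it relies on boundedness of the sequence $v_{i,p}(y_p)$, and this boundedness is exactly what the a priori estimate $(\mathcal{P}_3^{k})$ is tailored to deliver, via the confinement of $y_p$ in a compact set of $\R^{2}$ whenever $\mu_p/\mu_{i,p}$ is bounded away from $0$. In the opposite regime the same bound forces $y_p\to 0$, where the bubble $U$ vanishes, and the identity collapses to the contradiction used above; no further information on the nodal set or on the sign of $u_p$ is needed.
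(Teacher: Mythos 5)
Your proof is correct and follows essentially the same route as the paper's: both use $(\mathcal{P}_3^k)$ to confine $y_p=(x_p-x_{i,p})/\mu_{i,p}$, the $C^1_{loc}$ convergence from $(\mathcal{P}_2^k)$ to evaluate $v_{i,p}(y_p)$ in the limit, and the identity $(\mu_{i,p}/\mu_p)^2=|1+v_{i,p}(y_p)/p|^{p-1}\to e^{U(y_\infty)}\leq 1$, the paper merely splitting the argument into "bounded" then "$\leq 1$" where you run a trichotomy on subsequential limits. A minor point: you state the exponent identity with the correct square on $\mu_{i,p}/\mu_p$ (the paper omits it, a typo), and like the paper you actually obtain $\limsup_p \mu_{i,p}/\mu_p\leq 1$, which is what is used downstream.
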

\begin{proof}
Let us start by proving that $\frac{\mu_{i,p}}{\mu_p}$ is bounded for $p$ large. So we assume by contradiction that there exists a sequence $p_n\rightarrow +\infty$ such that
\begin{equation}\label{ipotAssurdo2}
\frac{\mu_{i,p_n}}{\mu_{p_n}}\rightarrow +\infty, \ \mbox{ as } p_n\rightarrow +\infty.
\end{equation}
By $(\mathcal{P}_3^{k})$ and \eqref{ipotAssurdo2} we have
\[
\frac{|x_{p_n}-x_{i,p_n}|}{\mu_{i, p_n}}=\frac{|x_{p_n}-x_{i,p_n}|}{\mu_{ p_n}}\frac{\mu_{p_n}}{\mu_{i,p_n}}\rightarrow 0,
\]
so that by $(\mathcal{P}_2^{k})$
\[
v_{i,p_n}\left(\frac{x_{p_n}-x_{i,p_n}}{\mu_{i, p_n}}\right)\rightarrow U(0)= 0.
\]
As a consequence
\[
\frac{\mu_{i,p_n}}{\mu_{p_n}}=\left(\frac{u_{p_n}(x_{p_n})}{u_{p_n}(x_{i,p_n})}\right)^{p_n-1}=\left(1+\frac{v_{i,p_n}\left(\frac{x_{p_n}-x_{i,p_n}}{\mu_{i,p_n}} \right)}{p_n}\right)^{p_n-1}\rightarrow e^{U(0)}=1,
\]
which contradicts \eqref{ipotAssurdo2}.

\

Next we show that $\tfrac\mip{\mu_p}\leq 1.$

 Assume by contradiction that there exists $\ell >1$ and a sequence $p_n\rightarrow +\infty$ such that
\begin{equation}\label{ipotAssurdo3}
\frac{\mu_{i,p_n}}{\mu_{p_n}}\rightarrow \ell, \ \mbox{ as } p_n\rightarrow +\infty.
\end{equation}
By $(\mathcal{P}_3^{k})$ and \eqref{ipotAssurdo3} we have
\[
\frac{|x_{p_n}-x_{i,p_n}|}{\mu_{i, p_n}}=\frac{|x_{p_n}-x_{i,p_n}|}{\mu_{ p_n}}\frac{\mu_{p_n}}{\mu_{i,p_n}}\leq \frac{2\sqrt C}{\ell}
\]
so that by $(\mathcal{P}_2^{k})$ there exists $x_\infty\in\mathbb R^2$, $|x_\infty|\leq \frac{2\sqrt C}{\ell}$ such that, up to a subsequence
\[
v_{i,p_n}\left(\frac{x_{p_n}-x_{i,p_n}}{\mu_{i, p_n}}\right)\rightarrow U(x_\infty)\leq 0.
\]
As a consequence
\[
\frac{\mu_{i,p_n}}{\mu_{p_n}}=\left(\frac{u_{p_n}(x_{p_n})}{u_{p_n}(x_{i,p_n})}\right)^{p_n-1}=\left(1+\frac{v_{i,p_n}\left(\frac{x_{p_n}-x_{i,p_n}}{\mu_{i,p_n}} \right)}{p_n}\right)^{p_n-1}\rightarrow e^{U(x_\infty)}.
\]
By \eqref{ipotAssurdo3} and the assumption $\ell >1$ we deduce
\[U(x_0)=\log \ell +o_p(1)>0\]
reaching a contradiction.
\end{proof}

\

\begin{proposition}\label{lemma:rappMuZero}
Let $x_p$ and $x_{i,p}$ be as in the statement of Proposition \ref{lemma:rapportoMuBounded}.
If \begin{equation}\label{ConditionNonVedo}\tfrac{|x_p-\xip|}{\mip}\to+\infty,\end{equation}
then \[\tfrac{\mip}{\mu_p}\to 0.\]
\end{proposition}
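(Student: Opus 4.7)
The plan is to use property $(\mathcal{P}_3^k)$ applied at the point $x_p$ to control $|x_p-\xip|$ in terms of $\mu_p$, and then combine this with the hypothesis \eqref{ConditionNonVedo} that $|x_p-\xip|/\mip \to +\infty$ to conclude.

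More precisely, since by assumption $R_{k,p}(x_p)=|x_p-\xip|$, property $(\mathcal{P}_3^k)$ evaluated at $x=x_p$ gives
\[
p|x_p-\xip|^2|\upp(x_p)|^{p-1}\leq C
\]
for $p$ large. Recalling that $\mu_p^{-2}=p|\upp(x_p)|^{p-1}$, this reads
\[
\frac{|x_p-\xip|^2}{\mu_p^2}\leq C,
\qquad\text{i.e.,}\qquad
|x_p-\xip|\leq \sqrt{C}\,\mu_p.
\]

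Dividing by $\mip$ and using hypothesis \eqref{ConditionNonVedo}, we obtain
\[
\frac{\mu_p}{\mip}\geq \frac{1}{\sqrt{C}}\cdot\frac{|x_p-\xip|}{\mip}\longrightarrow+\infty
\]
as $p\to+\infty$, which is exactly the desired conclusion $\mip/\mu_p\to 0$.

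There is no real obstacle here: the whole content of the proposition is that $(\mathcal{P}_3^k)$, which is the concentration estimate built into the finite selection of blow-up families, forces any further point whose blow-up scale is not asymptotically dominated by the nearest $\xip$ to lie within $O(\mu_p)$ of $\xip$. Note in particular that, contrary to Proposition \ref{lemma:rapportoMuBounded}, here one cannot argue via the $C^1_{loc}$ convergence of $v_{i,p}$, since the rescaled points $(x_p-\xip)/\mip$ escape to infinity by \eqref{ConditionNonVedo}; it is precisely the uniform bound $(\mathcal{P}_3^k)$, valid on the whole of $\Omega$, that substitutes for that local information.
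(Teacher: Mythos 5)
Your proof is correct and rests on exactly the same ingredient as the paper's: evaluating $(\mathcal{P}_3^k)$ at $x_p$ to obtain $|x_p-x_{i,p}|\leq\sqrt{C}\,\mu_p$ and then dividing by $\mu_{i,p}$. The paper phrases this as a contradiction argument (using Proposition \ref{lemma:rapportoMuBounded} to extract a subsequence with $\mu_{i,p_n}/\mu_{p_n}\to\ell\in(0,1]$), whereas your direct version bypasses that step, but the substance is identical.
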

\begin{proof}
By Proposition \ref{lemma:rapportoMuBounded} we know that
\[
\frac{\mu_{i,p}}{\mu_{p}}\leq 1.
\]
Assume by contradiction that \eqref{ConditionNonVedo} holds but there exists $0<\ell \leq 1$ and a sequence $p_n\rightarrow +\infty$ such that
\begin{equation}\label{ipotAssurdo4}
\frac{\mu_{i,p_n}}{\mu_{p_n}}\rightarrow \ell, \ \mbox{ as } n\rightarrow +\infty.
\end{equation}
Then \eqref{ipotAssurdo4} and $(\mathcal{P}_3^{k})$ imply
\[
\frac{|x_{p_n}-x_{i,p_n}|}{\mu_{i, p_n}}=\frac{|x_{p_n}-x_{i,p_n}|}{\ell\  \mu_{p_n}}+o_{n}(1)\leq \frac{C}{\ell} +o_n(1) \ \mbox{ as }n\rightarrow +\infty
\]
which contradicts \eqref{ConditionNonVedo}.
\end{proof}

\

\begin{remark} \label{remarkSegnoNegativoMu}
If  $x_p$ and $x_{i,p}$ have opposite sign, i.e.
\[u_p(x_p)u_p(x_{i,p})<0,\]
then, by Corollary \ref{cor:nonvedoNL}, necessarily \eqref{ConditionNonVedo} holds.
Hence in this case \[\tfrac{\mip}{\mu_p}\to 0.\]
\end{remark}

\

\

\section{$G$-symmetric case: asymptotic analysis about the maximum points} \label{Section:GSymmetricMax}

In this section we start the asymptotic analysis which leads to the proof of Theorem \ref{TeoremaPrincipaleCasoSimmetrico}.

So we assume that $\Omega\subset\R^2$ is a $G$-symmetric domain as in the statement of Theorem \ref{TeoremaPrincipaleCasoSimmetrico}.

In particular we recall the hypothesis
\begin{equation}\label{primaHpSimm}
|G|\geq 4e.
\end{equation}
Then we consider a family $(\upp)$ of sign-changing $G$-symmetric solutions of \eqref{problem} with the properties listed in Theorem \ref{TeoremaPrincipaleCasoSimmetrico}.

In particular $u_p$ satisfies
\bel\label{assumptionEnergy}
p\int_{\Omega}|\nabla u_p|^2\leq \alpha\,8\pi e
\eel
for some $\alpha <5$ and $p$  large.

\

In the sequel we keep all notations introduced in Section \ref{section:intro} and Section \ref{SectionGeneralAnalysis} and add the following ones:
\begin{itemize}
\item $\mathcal{N}_p^{\pm}\subset\Omega$ denotes the positive/negative nodal domain of $u_p$

\item $\widetilde{\mathcal{N}}_p^{\pm}$ are the rescaled nodal domains about the points $x_p^{\pm}$ by the parameters $\mu_p^{\pm}$ defined in the introduction, i.e. \[\widetilde{\mathcal{N}}_p^{\pm}:=\frac{\mathcal{N}_p^{\pm}-x_p^{\pm}}{\mu_p^{\pm}}=\{x\in\mathbb R^2: x_p^{\pm}+\mu_p^{\pm}x\in \mathcal{N}_p^{\pm}\}.\]
\end{itemize}

\

We recall an energy lower bound (see for example \cite{DeMarchisIanniPacella}) and some obvious properties deriving from \eqref{assumptionEnergy}

\begin{lemma}\label{LemmaLowerUpper}
For any $\epsilon >0$ there exists $p_{\epsilon}$ such that
\begin{equation}\label{LowerEnergy}
pE_p(u_p^{\pm})\geq \  4\pi e -\ \epsilon \quad \forall p\geq p_{\epsilon}.
\end{equation}
Moreover
\[E_p(u_p)\rightarrow 0,  \ E_p(u_p^{\pm})\rightarrow 0, \ \|\nabla u_p\|_2\rightarrow 0, \ \|\nabla u_p^{\pm}\|_2\rightarrow 0.\]
\end{lemma}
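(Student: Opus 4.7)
The plan is to split the proof into the lower bound $pE_p(u_p^\pm)\geq 4\pi e-\epsilon$, which is the nontrivial part, and the four convergences, which follow easily from the energy bound \eqref{assumptionEnergy}.

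For the lower bound I would focus on $u_p^+$ (the case of $u_p^-$ is identical, swapping signs). The idea is that the restriction of $u_p^+$ to its positive nodal region solves a positive Lane-Emden problem on its own nodal domain: since $u_p$ vanishes on $NL_p\cup\partial\Omega$, we have $u_p^+\in H_0^1(\mathcal N_p^+)$ and $-\Delta u_p^+=(u_p^+)^p$ in $\mathcal N_p^+$, with zero Dirichlet datum on $\partial\mathcal N_p^+$. Testing with $u_p^+$ itself gives the Nehari-type identity $\|\nabla u_p^+\|_2^2=\|u_p^+\|_{p+1}^{p+1}$, hence $u_p^+$ lies on the Nehari manifold of $\mathcal N_p^+$ and, by \eqref{energiaSuSoluzioni} applied on $\mathcal N_p^+$,
\[
E_p(u_p^+)=\Big(\tfrac12-\tfrac1{p+1}\Big)\|\nabla u_p^+\|_2^2\geq c_p(\mathcal N_p^+),
\]
where $c_p(D)$ denotes the least energy (mountain pass/Nehari) level of the positive Lane-Emden problem on a bounded open set $D$. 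Extending admissible functions by zero from $\mathcal N_p^+$ to $\Omega$ yields the standard domain monotonicity $c_p(\mathcal N_p^+)\geq c_p(\Omega)$. Finally I would invoke the classical Ren-Wei asymptotic from \cite{RenWei1, RenWei2}: for any smooth bounded planar domain $\Omega$ one has $p\,c_p(\Omega)\to 4\pi e$ as $p\to+\infty$. Chaining these three facts gives $p\,E_p(u_p^+)\geq p\,c_p(\Omega)\geq 4\pi e-\epsilon$ for $p$ large.

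The ``moreover'' part is essentially bookkeeping. From \eqref{assumptionEnergy} we have $\|\nabla u_p\|_2^2\leq 8\pi e\,\alpha/p\to 0$. Since $\mathcal N_p^+$ and $\mathcal N_p^-$ are disjoint, $\|\nabla u_p\|_2^2=\|\nabla u_p^+\|_2^2+\|\nabla u_p^-\|_2^2$, and therefore $\|\nabla u_p^\pm\|_2\to 0$ as well. The Nehari identity $E_p(u_p)=(\tfrac12-\tfrac1{p+1})\|\nabla u_p\|_2^2$, together with its analogue for $u_p^\pm$ just derived above, then forces $E_p(u_p)\to 0$ and $E_p(u_p^\pm)\to 0$.

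The main obstacle is the sharp constant $4\pi e$ itself: it is not elementary and ultimately rests on the Ren-Wei asymptotic of least energy positive solutions of \eqref{problem} in two dimensions. Once that quantitative input is available, the reduction of $u_p^\pm$ to a positive Lane-Emden problem on a sub-domain together with the trivial monotonicity $c_p(\mathcal N_p^\pm)\geq c_p(\Omega)$ is routine. This is precisely why the lemma is only quoted and a reference to \cite{DeMarchisIanniPacella} is given.
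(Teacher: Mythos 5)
Your argument is correct and is essentially the proof the paper has in mind: the paper only quotes the lemma with a pointer to \cite{DeMarchisIanniPacella}, and the standard proof there is exactly your reduction (Nehari identity for $u_p^{\pm}$, monotonicity of the least-energy level under inclusion of domains, and the Ren--Wei asymptotic $p\,c_p(\Omega)\to 4\pi e$), while the ``moreover'' part follows from \eqref{assumptionEnergy} and \eqref{energiaSuSoluzioni} as you say. The only cosmetic remark is that you can skip the nodal domain entirely: testing \eqref{problem} with $u_p^{\pm}\in H^1_0(\Omega)$ already gives $\|\nabla u_p^{\pm}\|_2^2=\|u_p^{\pm}\|_{p+1}^{p+1}$, so $u_p^{\pm}$ lies on the Nehari manifold of $\Omega$ itself and $E_p(u_p^{\pm})\geq c_p(\Omega)$ directly.
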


\

From now on we assume w.l.o.g., as in Section \ref{section:intro}, that the $L^{\infty}$-norm of $u_p$ is achieved at the maximum point $x_p^+$ i.e. \[u_p(x_p^+)=\|u_p\|_{\infty}\geq -u_p(x_p^-).\]

Thanks to \eqref{assumptionEnergy} we are entitled to apply Proposition \ref{prop:x1N} to the solutions $(u_p)$.\\

\

\

For the rescaling about $x^+_p$ we have

\begin{proposition} \label{rem:x^+=x_1}
The rescaled function
\begin{equation}
v_p^+(x):=\fr{p}{\upp(x_p^+)}(\upp(x_p^++\mu_p^+ x)-\upp(x_p^+))
\end{equation}
defined on $\widetilde{\Omega}_{p}^+$ (see Section \ref{section:intro} for the definition) converges to $U$ in $C^1_{loc}(\mathbb R^2)$, where $U$ is the  function introduced in \eqref{v0}.

Moreover also the scaling of $u_p^+$ around $x_p^+$:
\begin{equation}
z_{p}^+(x):=z_{1,p}(x)=\fr{p}{\upp(x_p^+)}(\upp^+(x_p^++\mu_p^+ x)-\upp(x_p^+))
\end{equation}
defined on $\widetilde{\mathcal{N}}_{p}^+$ converges to $U$ in $C^1_{loc}(\mathbb R^2)$.
\end{proposition}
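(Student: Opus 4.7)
The plan is to identify $x_p^+$ with the first concentration point produced by the construction in Proposition \ref{prop:x1N}, and then read off both conclusions from $(\mathcal{P}_2^1)$ and Corollary \ref{cor:nonvedoNL}.

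First I would observe that by our standing normalization $u_p(x_p^+)=\|u_p\|_\infty=\max_{\bar\Omega}|u_p|$, so $x_p^+$ is a point where $|u_p|$ achieves its maximum and $u_p(x_p^+)>0$. Looking back at Step 1 of the proof of Proposition \ref{prop:x1N}, the first sequence $(x_{1,p})$ is chosen precisely as a maximum point of $|u_p|$ with $u_p(x_{1,p})=\max_\Omega u_p>0$; hence we are entitled to take $x_{1,p}=x_p^+$ in the construction. With this choice $\mu_{1,p}=\mu_p^+$ and $v_{1,p}\equiv v_p^+$ on $\widetilde{\Omega}_p^+=\widetilde{\Omega}_{1,p}$. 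Property $(\mathcal{P}_2^1)$, which is part of the conclusion of Proposition \ref{prop:x1N}, then yields $v_p^+\to U$ in $C^1_{loc}(\R^2)$, which is the first assertion.

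For the second assertion, since $u_p(x_p^+)>0$ the nodal domain $\mathcal N_{1,p}$ of $u_p$ containing $x_{1,p}=x_p^+$ coincides with $\mathcal N_p^+$, and consequently $u_p^1=u_p\chi_{\mathcal N_{1,p}}=u_p^+$ and $z_{1,p}=z_p^+$ on $\widetilde{\mathcal N}_p^+$. Corollary \ref{cor:nonvedoNL} then gives $z_p^+\to U$ in $C^1_{loc}(\R^2)$. More concretely, the mechanism behind Corollary \ref{cor:nonvedoNL} applied to our setting is the following: by Corollary \ref{cor:nonvedoNL} one has $\mathrm{dist}(x_p^+,NL_p)/\mu_p^+\to+\infty$, so for every $R>0$ and all $p$ large $B_R(0)\subset\widetilde{\mathcal N}_p^+$, whence on $B_R(0)$ the rescaled functions $z_p^+$ and $v_p^+$ coincide and therefore share the same $C^1_{loc}$ limit $U$.

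There is essentially no obstacle here beyond bookkeeping: the content of the statement is already packaged in Proposition \ref{prop:x1N} and Corollary \ref{cor:nonvedoNL}, and the only thing to check is that the maximum point $x_p^+$ is an admissible choice for the first sequence $(x_{1,p})$, which is immediate from the normalization $\|u_p\|_\infty=u_p(x_p^+)$.
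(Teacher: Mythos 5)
Your proposal is correct and follows essentially the same route as the paper: the authors likewise note that since $x_p^+$ realizes $\|u_p\|_\infty$ it can serve as the first family $(x_{1,p})$ in \emph{Step 1} of Proposition \ref{prop:x1N}, which gives the convergence of $v_p^+$, and then invoke Corollary \ref{cor:nonvedoNL} for $z_p^+$. Your extra remarks spelling out why $z_p^+$ and $v_p^+$ coincide on each fixed ball are just the explicit content of that corollary.
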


\begin{proof}
Since at $x_p^+$ the $L^{\infty}$-norm of $u_p$ is achieved, the proof of the convergence of $v_p^+$ is the same as that of \textit{Step 1} of Proposition \ref{prop:x1N}. The convergence of $z_p^+$ then comes from Corollary \ref{cor:nonvedoNL}.
\end{proof}

\

The previous Lemma \ref{LemmaLowerUpper} and Proposition \ref{rem:x^+=x_1} hold regardless the symmetry of $\Omega$. In the sequel using the symmetry assumptions on $\Omega$ and on our solution we will derive more specific and precise results.

\

Now we apply Proposition \ref{prop:x1N}, which gives a maximal number $k$ of families of points $(x_{i,p})$, $i=1,\ldots,k$, in $\Omega$ such that, up to a sequence, $(P^k_1)$, $(P^k_2)$ and $(P^k_3)$ hold for our solutions.

Then we get

\begin{proposition}\label{MaxVaazero} Defining $\mu_{i,p}$ as in \eqref{mip}, for $i=1,\dots,k$, we have
\[
\frac{|x_{i,p}|}{\mu_{i,p}}\ \mbox{ is bounded}
\]
and in particular $|x_{i,p}|\rightarrow 0$, $i=1,\dots,k$, namely the set of concentration points $\mathcal{S}=\{O\}$.
\end{proposition}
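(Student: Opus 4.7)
The plan is to argue by contradiction: assume that $|\xip|/\mip\to+\infty$ along a subsequence for some $i\in\{1,\dots,k\}$, and exploit the $G$-symmetry to manufacture many same-sign bubbles that, together with the independent energy lower bound for the opposite nodal part, overshoot the hypothesis \eqref{assumptionEnergy}. The numerical thresholds $|G|\geq 4e$ and $\alpha<5$ are sharp for this scheme, since they guarantee $|G|+e\geq 5e>\alpha e$, which is what will produce the contradiction.

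\textbf{Orbit construction.} Fix such an index $i$ and set $y_{g,p}:=g\cdot\xip$ for $g\in G$. Because $G$ acts by nontrivial rotations about $O$, the stabilizer of $\xip\neq O$ is trivial, so the orbit has exactly $|G|$ distinct points. Using $u_p\circ g=u_p$, one checks $|u_p(y_{g,p})|=|u_p(\xip)|$ (so the rescaling parameter attached to $y_{g,p}$ is again $\mip$) and that the corresponding rescaled function equals $v_{i,p}\circ g^{-1}$; since the limit profile $U$ in \eqref{v0} is radial, $(\mathcal{P}_2^k)$ propagates to $v_{g,p}\to U$ in $C^1_{\rm loc}(\R^2)$. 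The pairwise separations satisfy $|y_{g,p}-y_{g',p}|\geq 2\sin(\pi/|G|)\,|\xip|$ for $g\neq g'$, so dividing by $\mip$ and invoking the contradiction hypothesis gives $|y_{g,p}-y_{g',p}|/\mip\to+\infty$. Hence $(\mathcal{P}_1^{|G|})$ and $(\mathcal{P}_2^{|G|})$ hold for the orbit $\{y_{g,p}\}_{g\in G}$.

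\textbf{Energy contradiction.} After replacing $u_p$ by $-u_p$ if necessary, assume $u_p(\xip)>0$, so all orbit points lie in $\mathcal N_p^+$. By Corollary \ref{cor:nonvedoNL} the balls $B_{R\mip}(y_{g,p})$ are contained in $\mathcal N_p^+$ and are pairwise disjoint for $p$ large. Arguing as in the proof of Lemma \ref{lemma:BoundEnergia}(iii) on these $|G|$ balls, and using $\alpha_i=\liminf|u_p(\xip)|\geq 1$ from \eqref{RemarkMaxCirca1}, one obtains
\[
p\,\|\nabla u_p^+\|_2^2\,=\,p\int_{\mathcal N_p^+}|u_p|^{p+1}\,dx\,\geq\,8\pi|G|+o(1).
\]
Independently, Lemma \ref{LemmaLowerUpper} applied to $u_p^-$ yields $p\,\|\nabla u_p^-\|_2^2\geq 8\pi e+o(1)$. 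Summing and comparing with \eqref{assumptionEnergy},
\[
8\pi\bigl(|G|+e\bigr)+o(1)\,\leq\, p\int_\Omega|\nabla u_p|^2\,dx\,\leq\, \alpha\cdot 8\pi e,
\]
which forces $|G|\leq(\alpha-1)e<4e$, contradicting $|G|\geq 4e$. Therefore $|\xip|/\mip=O(1)$ for every $i$, and since $\mip\to0$ this gives $\xip\to O$ and $\mathcal S=\{O\}$.

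The delicate step is isolating the \emph{one-sided} lower bound on $p\|\nabla u_p^+\|_2^2$ rather than only on the total $p\int_\Omega|\nabla u_p|^2$: this rests on Corollary \ref{cor:nonvedoNL}, which keeps each concentration ball strictly inside its own nodal component, so that the local contributions of $|u_p|^{p+1}$ genuinely accumulate inside $\mathcal N_p^+$. Once this is in place, the sign-blind energy bound of Lemma \ref{LemmaLowerUpper} for $u_p^-$ supplies the extra $8\pi e$ that the numerology $|G|+e>\alpha e$ needs to produce the contradiction.
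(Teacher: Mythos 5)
Your proof is correct and follows essentially the same strategy as the paper's: assume $|x_{i,p}|/\mu_{i,p}\to+\infty$, use the $G$-orbit to produce $|G|$ asymptotically disjoint bubbles inside one nodal region each contributing $8\pi$ to $p\int|u_p|^{p+1}$, add the $8\pi e$ lower bound for the opposite nodal part from Lemma \ref{LemmaLowerUpper}, and contradict \eqref{assumptionEnergy} via $|G|\leq(\alpha-1)e<4e$. The only cosmetic difference is that you work with fixed balls $B_{R\mu_{i,p}}$ and let $R\to\infty$ afterwards (packaging the estimate as in Lemma \ref{lemma:BoundEnergia}(iii)), whereas the paper uses a single expanding radius $R_n$; the numerology and the use of Corollary \ref{cor:nonvedoNL} to keep the balls inside $\mathcal N_p^+$ are identical.
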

\begin{proof}

Without loss of generality we can assume that either $(x_{i,p})_p\subset \mathcal{N}_p^{+}$ or $(x_{i,p})_p\subset \mathcal{N}_p^{-}$. We prove the result in the case $(x_{i,p})_p\subset \mathcal{N}_p^{+}$, the other case being similar. Moreover in order to simplify the notation we drop the dependence on $i$ namely we set
$x_{p}:=x_{i,p}$ and $\mu_{p}:=\mu_{i,p}$.

Let $h:=|G|$ and let us denote by $g^j$, $j=0,\dots, h-1$, the elements of $G$. We consider the rescaled nodal domains
\[
\widetilde{\mathcal{N}_{p}}^{j,+} :=\{x\in\mathbb R^2\ : \ \mu_p x +g^jx_p\in \mathcal N_p^+\}
,\ \ j=0,\dots, h-1,\]
 and the rescaled functions $z_{p}^{j,+}(x): \widetilde{\mathcal{N}_{p}}^{j,+}\rightarrow\R$ defined by
\begin{equation}\label{z_j} z_{p}^{j,+}(x):=\frac{p}{u_{p}^+(x_{p})}\left( u_{p}^+(\mu_{p} x+g^jx_{p})-u_{p}^+(x_{p}) \right), \ \ j=0,\dots, h-1.\end{equation}
 Hence it's not difficult to see (as in Corollary \ref{cor:nonvedoNL}) that each $z_{p}^{j,+}$ converges to $U(x)=\log \left(\frac{1}{(1+\frac{1}{8}|x|^2)^2}\right)$ in $C^1_{loc}(\mathbb R^2)$, as $p\rightarrow \infty$ and $8\pi =\int_{\R^2}e^{U}dx$.\\

Assume by contradiction that there exists a sequence $p_n\rightarrow +\infty$ such that $\frac{|x_{p_n}|}{\mu_{p_n}}\rightarrow + \infty$.  Then, since the $h$ distinct points $g^j x_{p_n}$, $j=0,\ldots, h-1$, are the vertex of a regular polygon centered in $O$,   $d_n:=|g^j x_{p_n}-g^{j+1}x_{p_n}|=2\widetilde d_n \sin{\frac{\pi}{h}}$, where $\widetilde d_n:=|g^jx_{p_n}|$, $j=0,..,h-1$,  and so we also have that $\frac{d_n}{\mu_{p_n}}\rightarrow +\infty$.\\

Let \begin{equation}\label{R_n}R_{n}:=\min\left\{\frac{d_n}{3},\frac{d(x_{p_n},\partial\Omega)}{2},\frac{d(x_{p_n},NL_{p_n})}{2}\right\},
\end{equation}
then  by construction  $B_{R_n}(g^j x_{p_n})\subseteq \mathcal{N}_{p_n}^+$ for $j=0,\dots,h-1$,
\begin{equation}\label{palleDisgiunte} B_{R_n}(g^j x_{p_n})\cap B_{R_n}(g^l x_{p_n}) =\emptyset,\ \ \mbox{ for }j\neq l
\end{equation}
and
\begin{equation}\label{invadeR2}
\frac{R_n}{\mu_{p_n}}\rightarrow  +\infty.
\end{equation}

Using \eqref{invadeR2}, the convergence of $z_{p_n}^{j,+}$ to $U$,  \eqref{RemarkMaxCirca1} and Fatou's lemma, we have
\begin{eqnarray}\label{betterEstimate}
8\pi &=&\int_{\R^2}e^{U}dx\nonumber
\\
&\stackrel{\textrm{Fatou + conv.} + \eqref{invadeR2}}{\leq}& \lim_n \int_{ B_{\frac{R_n}{\mu_{p_n}}}(0)  } e^{z_{p_n}^{j,+} + (p_n+1)\left(\log{\left|1+\frac{z_{p_n}^{j,+}}{p_n}\right|}-\frac{z_{p_n}^{j,+}}{(p_n+1)}\right)}dx\nonumber
\\
&= &\lim_n \int_{B_{\frac{R_n}{\mu_{p_n}}}(0)}\left|1+\frac{z_{p_n}^{j,+}}{p_n} \right|^{(p_n+1)}dx\nonumber
\\
&=& \lim_n \int_{B_{\frac{R_n}{\mu_{p_n}}}(0)}\left|\frac{ u_{p_n}^+(\mu_{p_n} x+g^jx_{p_n})}{ u_{p_n}^+(x_{p_n})}dx     \right|^{(p_n+1)}dx\nonumber
\\
&=& \lim_n \int_{B_{R_n}(g^jx_{p_n})}\frac{\left| u_{p_n}^+\right|^{(p_n+1)}}{(\mu_{p_n})^2 \left|u_{p_n}^+(x_{p_n})\right|^{(p_n+1)}}dx\nonumber
\\
&=&\lim_n \frac{p_n}{\left|u_{p_n}^+(x_{p_n})\right|^2}  \int_{B_{R_n}(g^jx_{p_n})} \left| u_{p_n}^+\right|^{(p_n+1)}dx\nonumber
\\
&\stackrel{\eqref{RemarkMaxCirca1}}{\leq}&
 \lim_n p_n\int_{B_{R_n}(g^jx_{p_n})} \left| u_{p_n}^+\right|^{(p_n+1)}dx.
\end{eqnarray}
Summing on $j=0,\dots, h-1$, using \eqref{palleDisgiunte}, \eqref{assumptionEnergy}, \eqref{LowerEnergy} and \eqref{energiaSuSoluzioni} we get:
\begin{eqnarray*}
h\cdot 8\pi
&\leq &
\lim_n\ p_n \sum_{j=0}^{h-1} \int_{B_{R_n}(g^jx_{p_n})} \left| u_{p_n}^+\right|^{(p_n+1)}dx
\\
&\stackrel{\mbox{\eqref{palleDisgiunte}}}{\leq}&
\lim_n\   p_n\int_{\mathcal{N}_{p_n}^+} \left| u_{p_n}^+\right|^{(p_n+1)}dx
\\
&= &
 \lim_n\left(   p_n\int_{\Omega} \left| u_{p_n}\right|^{(p_n+1)}-\   p_n\int_{\mathcal{N}_{p_n}^-} \left| u_{p_n}^-\right|^{(p_n+1)}\right)dx
\\
&\stackrel{\eqref{assumptionEnergy} + \eqref{LowerEnergy}}\leq &
  \left(\alpha-1\right)\ \cdot 8\pi e
  \\
  &\stackrel{\alpha<5}< &
\  4 \ \cdot 8\pi e
  \end{eqnarray*}
  which is in contradiction with our assumption \eqref{primaHpSimm} on $|G|$.
  \end{proof}

\
\begin{remark}\label{rem:buonNormaInfinitoAbbassaSimmetria}
If we knew that $\|u_p\|_{\infty}\geq \sqrt{e}$, then we would obtain a better estimate in \eqref{betterEstimate}, and so Proposition \ref{MaxVaazero} would hold under the weaker symmetry assumption $|G|\geq 4$.
\end{remark}

\

\begin{corollary} \label{regioneNodaleInterna} We have:
\begin{itemize}
\item[$(i)$] $O\in \mathcal{N}_p^+$ for $p$ large.
\item[$(ii)$] $ x_{i,p}\in\mathcal{N}_p^+\mbox{ for } p\mbox{ large and  }i=1,\dots,k.$
\end{itemize}
\end{corollary}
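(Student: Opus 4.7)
The plan is to combine the two size estimates already in hand. Proposition \ref{MaxVaazero} says that $|x_{i,p}|/\mu_{i,p}$ is uniformly bounded (in $p$), while Corollary \ref{cor:nonvedoNL} gives $dist(x_{i,p},NL_p)/\mu_{i,p}\to+\infty$. Putting these together, for $p$ sufficiently large
\[
|x_{i,p}|\;<\;dist(x_{i,p},NL_p),
\]
which says that the open disc centered at $x_{i,p}$ of radius $dist(x_{i,p},NL_p)$ -- disjoint from $NL_p$ by definition, and hence entirely contained in the single nodal component of $u_p$ that contains $x_{i,p}$ -- already contains the origin $O$. Thus $O$ and $x_{i,p}$ lie in the same nodal region of $u_p$ for every $i\in\{1,\dots,k\}$ and every $p$ large.

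For part $(i)$, I would apply this observation with the index $i=1$. By the identification $x_p^+=x_{1,p}$ used throughout Section \ref{Section:GSymmetricMax} (see in particular Proposition \ref{rem:x^+=x_1}), the maximum point $x_p^+$ shares its nodal region with $O$. Since $x_p^+\in\mathcal{N}_p^+$ by definition of the maximum, this gives $O\in\mathcal{N}_p^+$.

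For part $(ii)$, the same general argument forces $O$ and $x_{i,p}$ to lie in the same nodal component for every $i$. By $(i)$ that component is $\mathcal{N}_p^+$, hence $x_{i,p}\in\mathcal{N}_p^+$. The hypothesis of Theorem \ref{TeoremaPrincipaleCasoSimmetrico} that $u_p$ has exactly two nodal regions enters here only to ensure that $\mathcal{N}_p^+$ is a single well-defined connected set, so that ``the component containing $O$'' coincides with $\mathcal{N}_p^+$.

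There is essentially no hard step: the corollary is a clean bookkeeping consequence of Proposition \ref{MaxVaazero} together with Corollary \ref{cor:nonvedoNL}. The one point that deserves a moment's attention is the identification $x_p^+=x_{1,p}$ already fixed in Proposition \ref{rem:x^+=x_1}; without it part $(i)$ would not be an instance of the general argument.
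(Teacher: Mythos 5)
Your argument is correct and is essentially the paper's own proof run in the direct rather than the contrapositive direction: both combine Proposition \ref{MaxVaazero} with Corollary \ref{cor:nonvedoNL} to get $|x_{i,p}|<\mathrm{dist}(x_{i,p},NL_p)$ for $p$ large and conclude that $x_{i,p}$ and $O$ must lie in the same nodal domain, which is $\mathcal{N}_p^+$ because it contains $x_p^+=x_{1,p}$. The only cosmetic point is that the full disc of radius $\mathrm{dist}(x_{i,p},NL_p)$ need not be contained in $\Omega$; it is cleaner to use the segment joining $x_{i,p}$ to $O$, which lies in $\Omega$ for $p$ large since $|x_{i,p}|\to 0$, has length smaller than $\mathrm{dist}(x_{i,p},NL_p)$, hence avoids $NL_p$ and stays in a single nodal component.
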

\begin{proof}
By the properties of the solutions $(u_p)$ we know that the nodal line  $NL_p$ is the boundary of a domain containing $O$ in his interior. Hence  if $O\not\in\mathcal{N}_{p_n}^+$ for a sequence $p_n\rightarrow\infty$, it would follow that
\begin{equation}\label{distanza}
d(x_{p_n}^+,NL_{p_n})\leq |x_{p_n}^+|.
\end{equation}
Dividing by $\mu_{p_n}^+$ and passing to the limit, from Corollary \ref{cor:nonvedoNL} (remember that $x_{p_n}^+$ has the role of $x_{1,p_n}$ in the general Proposition \ref{prop:x1N}) we get that
\[\frac{|x_{p_n}^+|}{\mu_{p_n}^+}\rightarrow +\infty\]
which is a contradiction  with
  Proposition \ref{MaxVaazero}. So $(i)$ holds.

To prove $(ii)$ let us argue again by contradiction assuming that for a
sequence  $p_n\rightarrow +\infty$, $u_{p_n}(x_{i,p_n})<0$ holds for some $i\in\{1,\dots, k\}$. Then
\[
d(x_{i,p_n},NL_{p_n})\leq |x_{i,p_n}|
\]
so that, exactly with the same proof as in i), we reach a contradiction  with Proposition \ref{MaxVaazero}. So $(ii)$ holds.
\end{proof}

\

\begin{proposition}\label{prop:bark=1}
The maximal number  $k$ of families of points $(x_{i,p})$, $i=1,\ldots, k$, for which $(P^k_1)$, $(P^k_2)$ and  $(P^k_3)$ hold is $1$.
\end{proposition}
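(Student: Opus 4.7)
The strategy is to use the $G$-invariance of $u_p$ to upgrade the boundedness of $|x_{i,p}|/\mu_{i,p}$ (given by Proposition \ref{MaxVaazero}) into the stronger statement $|x_{i,p}|/\mu_{i,p}\to 0$, and then to observe that this refined bound is incompatible with $(\mathcal P_1^k)$ as soon as $k\geq 2$.

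\emph{First step: $|x_{i,p}|/\mu_{i,p}\to 0$ for every $i=1,\dots,k$.} Fix $i$ and set $h:=|G|\geq 4e$. We may assume $x_{i,p}\neq O$ (otherwise the claim is trivial), so that the $G$-orbit of $x_{i,p}$ consists of the $h$ distinct vertices $\{g^jx_{i,p}\}_{j=0}^{h-1}$ of a regular polygon centered at $O$. By $G$-invariance of $u_p$ we have $u_p(g^jx_{i,p})=u_p(x_{i,p})$, which translates, via the rescaled function $v_{i,p}$ of $(\mathcal P_2^k)$, into
\[
v_{i,p}\!\left(\frac{g^jx_{i,p}-x_{i,p}}{\mu_{i,p}}\right)=0 \qquad\text{for all }j=0,\dots,h-1.
\]
The norm of $y^j_{i,p}:=(g^jx_{i,p}-x_{i,p})/\mu_{i,p}$ equals $2|x_{i,p}|\sin(\pi j/h)/\mu_{i,p}$, which is bounded by Proposition \ref{MaxVaazero}; so, up to a subsequence, $y^j_{i,p}\to y^j_\infty\in\mathbb R^2$. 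The $C^1_{loc}(\mathbb R^2)$ convergence $v_{i,p}\to U$ forces $U(y^j_\infty)=0$, and since the explicit formula \eqref{v0} gives $U<0$ off the origin, we deduce $y^j_\infty=0$. Taking $j=1$ (so that $\sin(\pi/h)>0$) yields $|x_{i,p}|/\mu_{i,p}\to 0$.

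\emph{Second step: a contradiction if $k\geq 2$.} Pick any $i\neq j$ in $\{1,\dots,k\}$. Because $(\mathcal P_1^k)$ is symmetric in its indices, both
\[
\frac{|x_{i,p}-x_{j,p}|}{\mu_{i,p}}\longrightarrow +\infty \qquad\text{and}\qquad \frac{|x_{i,p}-x_{j,p}|}{\mu_{j,p}}\longrightarrow +\infty.
\]
On the other hand, combining the triangle inequality with the first step gives $|x_{i,p}-x_{j,p}|\leq |x_{i,p}|+|x_{j,p}|=o(\mu_{i,p})+o(\mu_{j,p})$. Dividing by $\mu_{i,p}$ forces $\mu_{j,p}/\mu_{i,p}\to+\infty$, while dividing by $\mu_{j,p}$ forces $\mu_{i,p}/\mu_{j,p}\to+\infty$ — an immediate contradiction. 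Hence $k=1$.

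The only delicate part is the first step, where one must combine the boundedness of the rescaled orbit positions (itself a consequence of the symmetry-based argument of Proposition \ref{MaxVaazero}) with the uniqueness of the zero of the limit bubble $U$; the rest is essentially algebraic book-keeping.
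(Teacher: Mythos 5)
Your proof is correct, but it takes a slightly different (and somewhat longer) route than the paper's. The paper argues directly from the boundedness $|x_{i,p}|/\mu_{i,p}\leq C$ of Proposition \ref{MaxVaazero}: identifying $x_{1,p}=x_p^+$ and using that $\mu_{1,p}=\mu_p^+\leq\mu_{j,p}$ (since $x_p^+$ is a global maximum point of $|u_p|$), it gets $\frac{|x_{1,p}-x_{j,p}|}{\mu_{j,p}}\leq\frac{|x_{1,p}|}{\mu_{1,p}}+\frac{|x_{j,p}|}{\mu_{j,p}}\leq C$, contradicting $(\mathcal{P}_1^k)$ in one line. You instead first upgrade the bound to $|x_{i,p}|/\mu_{i,p}\to 0$ via the $G$-orbit argument (essentially re-running the mechanism of Proposition \ref{prop:MaxVaazeroVelocemente}, correctly), and then exploit the symmetry of $(\mathcal{P}_1^k)$ in $i$ and $j$ to force both $\mu_{j,p}/\mu_{i,p}\to+\infty$ and $\mu_{i,p}/\mu_{j,p}\to+\infty$. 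Your second step is actually self-sufficient: with only the boundedness $|x_{i,p}|/\mu_{i,p}\leq C$ one already gets $\frac{|x_{i,p}-x_{j,p}|}{\mu_{i,p}}\leq C+C\frac{\mu_{j,p}}{\mu_{i,p}}$ and the symmetric estimate, so the same double divergence and the same contradiction follow; your first step is therefore an unnecessary detour, though a harmless one. What your version buys is that it never uses the special role of $x_{1,p}$ as the global maximum point (i.e. the ordering of the scales $\mu_{i,p}$), which makes the argument apply verbatim to any pair of indices; what the paper's version buys is brevity.
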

\begin{proof}
Let us assume by contradiction that $k > 1$ and set $x^+_p=x_{1,p}$. For a family $(x_{j,p})$, $j\in\{2,\ldots, k\}$ by  Proposition \ref{MaxVaazero}, there exists $C>0$ such that
\[
\frac{|x_{1,p}|}{\mu_{1,p}}\leq C\quad\textrm{and}\quad\frac{|x_{j,p}|}{\mu_{j,p}}\leq C.
\]
Thus, since by definition $\mu^+_p=\mu_{1,p}\leq \mu_{j,p}$, also
\[
\frac{|x_{1,p}|}{\mu_{j,p}}\leq C.
\]
Hence
\[
\frac{|x_{1,p}-x_{j,p}|}{\mu_{j,p}}\leq\frac{|x_{1,p}|+|x_{j,p}|}{\mu_{j,p}}\leq C\quad \textrm{as $p\to+\infty$},
\]
which  contradicts $(\mathcal{P}_1^k)$.
\end{proof}

\

\

Then we easily get

\begin{corollary}\label{prop:MinVaazero}
There exists $C>0$ such that
for any family $(x_p)_p\subset \Omega$, one has
\begin{equation}\label{boundDaQ1}
\frac{|x_p|}{\mu_p}\leq C
\end{equation}
for $p$ large.
\end{corollary}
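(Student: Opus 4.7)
The plan is to combine the structural results already established in this section: $k=1$ (Proposition \ref{prop:bark=1}), the anchoring of the unique concentration family at the origin (Proposition \ref{MaxVaazero}), the comparison $\mu_{1,p}\leq\mu_p$ (Proposition \ref{lemma:rapportoMuBounded}), and the pointwise decay property $(\mathcal{P}_3^1)$.

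First I would reduce to the nontrivial case $p|u_p(x_p)|^{p-1}\to+\infty$: otherwise $\mu_p$ stays bounded away from zero along the sequence and the conclusion is immediate from the boundedness of $\Omega$. In the remaining regime $\mu_p\to 0$, so all the general propositions of Section \ref{SectionGeneralAnalysis} apply at the family $(x_p)$, and in particular Proposition \ref{lemma:rapportoMuBounded} is applicable (its standing hypothesis is exactly $p|u_p(x_p)|^{p-1}\to+\infty$).

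The core estimate is a triangle inequality $|x_p|\leq|x_p-x_{1,p}|+|x_{1,p}|$ together with separate bounds for the two terms. For the first term, property $(\mathcal{P}_3^1)$ evaluated at $x=x_p$ gives
\[
p\,|x_p-x_{1,p}|^2\,|u_p(x_p)|^{p-1}\leq C,
\]
that is, $|x_p-x_{1,p}|/\mu_p\leq\sqrt{C}$. For the second term, Proposition \ref{lemma:rapportoMuBounded} yields $\mu_{1,p}/\mu_p\leq 1$ (the index $i$ in its statement is forced to be $1$ because $k=1$), hence
\[
\frac{|x_{1,p}|}{\mu_p}\;\leq\;\frac{|x_{1,p}|}{\mu_{1,p}},
\]
and the right-hand side is uniformly bounded by Proposition \ref{MaxVaazero}. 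Adding the two contributions produces a uniform constant $C$ as claimed.

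No serious obstacle is expected: all the heavy lifting was done in the preceding propositions, and the corollary amounts to a short bookkeeping argument once $k=1$ and $\mu_{1,p}\leq\mu_p$ are in hand. The only point that requires a moment of care is checking that Proposition \ref{lemma:rapportoMuBounded} is legitimately applicable, which is why I first isolate the blow-up regime $p|u_p(x_p)|^{p-1}\to+\infty$ at the outset.
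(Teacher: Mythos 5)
Your proof is correct and follows essentially the same route as the paper: triangle inequality, $(\mathcal{P}_3^1)$ applied at $x_p$ with $k=1$, the comparison $\mu_p^+=\mu_{1,p}\leq\mu_p$, and Proposition \ref{MaxVaazero} for the term $|x_{1,p}|/\mu_{1,p}$. The only cosmetic difference is that you invoke Proposition \ref{lemma:rapportoMuBounded} for $\mu_{1,p}\leq\mu_p$, whereas the paper gets this directly from the fact that $x_p^+$ maximizes $|u_p|$; your preliminary reduction to the blow-up regime is harmless and slightly more careful than the paper's.
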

\begin{proof}
By Proposition \ref{MaxVaazero}, \eqref{boundDaQ1}  holds for $x_{p}^+$.
Since, by Proposition \ref{prop:bark=1}, $k=1$, applying $(\mathcal{P}_3^1)$ to the points $(x_p)$, for $x_p\neq x_p^+$, we have
\[
\frac{|x_p-x_{p}^+|}{\mu_p}\leq C.
\]
By definition, $\mu_p^+\leq \mu_p$, hence we get
\[
\frac{|x_p|}{\mu_p}\leq \frac{|x_p-x_{p}^+|}{\mu_p}+\frac{|x_{p}^+|}{\mu_p}\leq\frac{|x_p-x_{p}^+|}{\mu_p}+\frac{|x_{p}^+|}{\mu_p^+}\leq C.
\]
\end{proof}

\begin{proposition}\label{prop:MaxVaazeroVelocemente} Let $(x_p)\subset\Omega$ be such that $\mu_p^{-2}:=p|\upp(x_p)|^{p-1}\to +\infty$ and assume that the rescaled functions $v_p(x):=\fr{p}{\upp(x_{p})}(\upp(x_{p}+\mu_{p} x)-\upp(x_{p}))$ converge to $U$ in $C^1_{loc}(\R^2\setminus\{-\lim_{p}\tfrac{x_p}{\mu_p}\})$ as $p\rightarrow +\infty$ ($U$ as in \eqref{v0}). Then
\begin{equation}\label{limiteZero}
\frac{|x_p|}{\mu_p}\to 0\ \ \mbox{ as }\ p\rightarrow +\infty.
\end{equation}
As a byproduct we deduce that $v_p\rightarrow U$ in $C^1_{loc}(\mathbb{R}^2\setminus\{0\})$, as $p\rightarrow +\infty$.
\end{proposition}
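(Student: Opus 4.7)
The plan is to argue by contradiction exploiting the $G$-symmetry of $u_p$ together with the rigidity of the radial bubble $U$. By Corollary \ref{prop:MinVaazero} the ratio $|x_p|/\mu_p$ is already bounded, so along any subsequence one can extract a further subsequence along which $x_p/\mu_p\to x_\infty\in\mathbb R^2$. It therefore suffices to prove that any such limit must satisfy $x_\infty=0$.

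Assume, for contradiction, that $|x_\infty|=\ell>0$ along some subsequence. Let $g\in G$ be a generator of the cyclic group, so that $g$ is a non-trivial rotation about the origin. Since $u_p$ is $G$-symmetric, $u_p(g x_p)=u_p(x_p)$, and evaluating the rescaled function at
\[
y_p:=\frac{g x_p-x_p}{\mu_p}=\frac{(g-\Id)x_p}{\mu_p}
\]
yields $v_p(y_p)=\frac{p}{u_p(x_p)}(u_p(g x_p)-u_p(x_p))=0$ for every $p$. Along the subsequence $y_p\to (g-\Id)x_\infty$, and the crucial observation is that this limit point is different from the ``hole'' $-x_\infty$ of the assumed convergence: indeed $(g-\Id)x_\infty=-x_\infty$ would mean $g x_\infty=0$, which is impossible because $g$ is an isometry fixing the origin and $|x_\infty|=\ell>0$.

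Hence a neighborhood of $(g-\Id) x_\infty$ lies inside $\mathbb R^2\setminus\{-x_\infty\}$, where by hypothesis $v_p\to U$ in $C^1_{loc}$, so
\[
0=v_p(y_p)\longrightarrow U\bigl((g-\Id)x_\infty\bigr).
\]
Since $U(y)=-2\log\bigl(1+|y|^2/8\bigr)$ vanishes only at $y=0$, this forces $(g-\Id)x_\infty=0$, i.e.\ $x_\infty$ is a fixed point of a non-trivial rotation about the origin, which is possible only for $x_\infty=0$. This contradicts $|x_\infty|=\ell>0$ and proves $|x_p|/\mu_p\to 0$. The byproduct is then immediate: substituting $\lim x_p/\mu_p=0$ back into the hypothesis gives $v_p\to U$ in $C^1_{loc}(\mathbb R^2\setminus\{0\})$. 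I do not anticipate a serious obstacle; the only subtle point is checking that the symmetry-generated test point $y_p$ stays in the region of $C^1_{loc}$ convergence, which is exactly the content of the inequality $g x_\infty\neq 0$.
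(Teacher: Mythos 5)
Your proof is correct and follows essentially the same route as the paper: both exploit the $G$-symmetry to produce points $\frac{gx_p-x_p}{\mu_p}$ where $v_p$ vanishes identically, then use the $C^1_{loc}$ convergence away from the hole $-\lim_p\frac{x_p}{\mu_p}$ to derive a contradiction with the fact that $U$ vanishes only at the origin. The only cosmetic difference is that the paper selects $g$ with $|x_p-gx_p|=C_g|x_p|$, $C_g>1$, so that the limit point has modulus $C_g\ell>\ell$ and $U<0$ there directly, whereas you take any generator and conclude via the fixed-point property of nontrivial rotations; both steps are valid.
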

\begin{proof}
%
%

%
By Corollary \ref{prop:MinVaazero} we know that $\frac{|x_p|}{\mu_p}\leq C$. Assume by contradiction that $\fr{|x_p|}{\mu_p}\to\ell>0$. Let $g\in G$ such that  $|x_{p}-g x_{p}|=C_g|x_{p}|$ with constant $C_g> 1$ (such a $g$ exists because $G$ is a group of rotation about the origin).
Hence
\[
\frac{|x_{p}-g x_{p}|}{\mu_{p}}=C_g\frac{|x_{p}|}{\mu_{p}}\rightarrow C_g\ell > \ell.
\]
Then $x_{\infty}:=\lim_{p\rightarrow +\infty} \fr{g x_p-x_p}{\mu_p} \in\R^2\setminus\{-\lim_{p}\tfrac{x_p}{\mu_p}\}$ and so by the $C^1_{loc}$ convergence we get
\[
v_{p}(\fr{g x_p-x_p}{\mu_p})\to U(x_{\infty})<0\quad\textrm{as $p\to+\infty$}.\]
On the other side, for any $g\in G$, one also has
\[
v_{p}(\fr{gx_{p}-x_{p}}{\mu_{p}})=\fr{p}{\upp(x_{p})}(\upp(gx_{p})-\upp(x_{p}))=0,
\]
by the symmetry of $u_p$ and this gives a contradiction.
\end{proof}

\

\begin{proposition}\label{prop:distanzaxipNL}
Let $(x_p)$ be a family of points satisfying the same assumptions  as in Proposition \ref{prop:MaxVaazeroVelocemente}. Then
$$\mbox{either}\ \ \ \  \frac{dist (x_p, NL_p)}{\mu_p}\rightarrow +\infty\ \ \ \mbox{ or }\ \ \ \ \frac{dist (x_p, NL_p)}{\mu_p}\rightarrow 0 \ \ \ \ \mbox{ as $p\rightarrow +\infty$.}$$
 Moreover if $u_p(x_p)>0$ then $$\frac{dist (x_p, NL_p)}{\mu_p}\rightarrow +\infty \ \ \ \ \mbox{ as $p\rightarrow +\infty$.} $$
\end{proposition}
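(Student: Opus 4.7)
The proof splits naturally into the dichotomy and the assertion under $u_p(x_p)>0$.

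\emph{Dichotomy.} The first part is a direct adaptation of the argument in the proof of Corollary \ref{cor:nonvedoNL} to the weaker convergence at our disposal. Suppose by contradiction that, along a subsequence, $\mathrm{dist}(x_p, NL_p)/\mu_p \to \ell\in(0,+\infty)$, and pick $y_p\in NL_p$ with $(y_p-x_p)/\mu_p\to \eta_\infty$, $|\eta_\infty|=\ell>0$. Since $u_p(y_p)=0$, the definition of $v_p$ yields $v_p((y_p-x_p)/\mu_p)=-p\to-\infty$. However, as $\eta_\infty\neq 0$, the $C^1_{\mathrm{loc}}(\mathbb{R}^2\setminus\{0\})$ convergence of $v_p$ to $U$ provided by Proposition \ref{prop:MaxVaazeroVelocemente} forces $v_p((y_p-x_p)/\mu_p)\to U(\eta_\infty)\in\mathbb{R}$, a contradiction.

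\emph{The case $u_p(x_p)>0$.} By the dichotomy it suffices to exclude $\mathrm{dist}(x_p, NL_p)/\mu_p\to 0$. Assume the latter and pick $y_p\in NL_p$ with $|y_p-x_p|=o(\mu_p)$. Since $k=1$ by Proposition \ref{prop:bark=1}, property $(\mathcal{P}_3^1)$ applied at $x_p$ yields $|x_p-x_p^+|\leq\sqrt{C}\,\mu_p$, so by the triangle inequality
\[
\mathrm{dist}(x_p^+,NL_p)\leq |y_p-x_p^+|\leq |y_p-x_p|+|x_p-x_p^+|=O(\mu_p).
\]
On the other hand, Corollary \ref{cor:nonvedoNL} applied to the unique concentration family $x_p^+$ gives $\mathrm{dist}(x_p^+,NL_p)/\mu_p^+\to+\infty$, whence, dividing, $A_p:=\mu_p/\mu_p^+\to+\infty$.

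I then analyze $w_p:=(x_p^+-x_p)/\mu_p$, which satisfies $|w_p|\leq\sqrt{C}$. A direct computation gives
\[
v_p(w_p)=\frac{p}{u_p(x_p)}\bigl(u_p(x_p^+)-u_p(x_p)\bigr)=p\!\left(\frac{u_p(x_p^+)}{u_p(x_p)}-1\right).
\]
Using the identity $(u_p(x_p^+)/u_p(x_p))^{(p-1)/2}=A_p$ together with $A_p\to+\infty$, a short case discussion (according to whether $\log A_p=o(p)$, $\log A_p\sim cp$, or $\log A_p/p\to+\infty$) shows $v_p(w_p)\to+\infty$. If $|w_p|$ does not converge to $0$, extracting a subsequence with $w_p\to w_\infty\neq 0$ and using the $C^1_{\mathrm{loc}}(\mathbb{R}^2\setminus\{0\})$ convergence would also give $v_p(w_p)\to U(w_\infty)\leq 0$, a contradiction. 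The remaining subcase $|w_p|\to 0$ is the main obstacle: in this situation $(x_p-x_p^+)/\mu_p^+$ must escape to infinity (otherwise $v_p^+\to U$ in $C^1_{\mathrm{loc}}(\mathbb{R}^2)$ from Proposition \ref{rem:x^+=x_1} would force $A_p$ bounded), and I plan to conclude by a careful matching between $v_p$ and $v_p^+$ on an intermediate annulus, exploiting the logarithmic decay of the Liouville bubble $U$ at infinity to contradict the required local convergence $v_p(y)\to U(y)$ for bounded $y\neq 0$.
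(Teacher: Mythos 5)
Your proof of the dichotomy is correct and is exactly the paper's argument: exclude a finite nonzero limit $\ell$ of $\mathrm{dist}(x_p,NL_p)/\mu_p$ by evaluating $v_p$ at the rescaled nodal point, where it equals $-p\to-\infty$ while the $C^1_{loc}(\R^2\setminus\{0\})$ convergence forces a finite limit $U(x_\infty)$ with $|x_\infty|=\ell\neq0$.

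The second part, however, has a genuine gap. Your reduction (via $(\mathcal{P}_3^1)$ and Corollary \ref{cor:nonvedoNL}) to $A_p=\mu_p/\mu_p^+\to+\infty$ and the computation $v_p(w_p)\to+\infty$ are fine, and they do dispose of the subcase $w_p\to w_\infty\neq0$. But the subcase $|w_p|\to0$ is left as a ``plan'': there you only know $\mu_p^+\ll|x_p-x_p^+|\ll\mu_p$, and neither convergence statement at your disposal controls $u_p$ at these intermediate scales --- $v_p\to U$ only away from the origin of the $\mu_p$-picture, and $v_p^+\to U$ only at scale $\mu_p^+$ --- while the nodal line also sits at distance $o(\mu_p)$ from $x_p$ with unknown geometry. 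The proposed ``matching on an intermediate annulus'' is precisely the hard analytical step and is not carried out, so the proof is incomplete. The paper avoids this entirely with a softer, purely topological argument that you should be able to reproduce: assuming $\mathrm{dist}(x_p,NL_p)/\mu_p\to0$, pick $z_p\in NL_p$ with $|x_p-z_p|=o(\mu_p)$ and $y_p\in\partial\Omega$ with $|x_p-y_p|/\mu_p\to+\infty$, join them by a continuous curve $\gamma_p$ contained in $\overline{\mathcal{N}_p^-}$ (possible since $NL_p=\partial\mathcal{N}_p^+\cap\Omega$ and $\mathcal{N}_p^-$ reaches $\partial\Omega$), and by continuity find $s_p=\gamma_p(t_p)$ with $|x_p-s_p|/\mu_p\to1$. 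Since $u_p(s_p)\leq0$ and $u_p(x_p)>0$, one gets $v_p\bigl((s_p-x_p)/\mu_p\bigr)\leq-p\to-\infty$, contradicting $v_p\bigl((s_p-x_p)/\mu_p\bigr)\to U(x_\infty)$ with $|x_\infty|=1$. This uses only the sign of $u_p(x_p)$ and the global structure of the nodal set, with no reference to $x_p^+$ or to $A_p$; I recommend replacing your Step on the case $u_p(x_p)>0$ by this argument.
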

\begin{proof}
By Proposition \ref{prop:MaxVaazeroVelocemente} the rescaled functions $v_p$ converge to $U$ in $C^1_{loc}(\mathbb R^2\setminus \{0\})$. Therefore in order to prove the first assertion we can argue exactly as in the proof of Corollary \ref{cor:nonvedoNL} but now we cannot  exclude $\ell=0$ because we do not have the convergence of $v_p$ in the whole $\R^2$.\\

If instead we know that $u_p(x_p)>0$, then we show that the second alternative cannot occur.
Indeed, assume by contradiction that there exists $z_p\in NL_p$ such that $\frac{|x_p- z_p|}{\mu_p}\rightarrow 0$. Let $y_p\in\partial\Omega$ such that $\frac{|x_p- y_p|}{\mu_p}\rightarrow +\infty$ and define a continuous curve $\gamma_p:[0,1]\rightarrow \mathcal N_p^-$ such that $\gamma_p(0)=z_p$, $\gamma_p(1)=y_p$. Then, by continuity, there exists $t_p\in [0,1]$ such that   $\frac{|x_p- s_p|}{\mu_p}\rightarrow 1$ for $s_p:=\gamma_p(t_p)$. Therefore $v_p\left(\frac{s_p-x_p}{\mu_p} \right)\rightarrow U(x_{\infty})<0$ as $p\rightarrow +\infty$ for a point $x_{\infty}$ such that $|x_{\infty}|=1$. On the other hand, since $u_p(x_p)>0$, it follows also that
 $v_p\left(\frac{s_p-x_p}{\mu_p} \right)\leq -p \rightarrow -\infty$, giving a contradiction.
\end{proof}

\

\
%
%

\

\section{$G$-symmetric case: asymptotic analysis about the minimum points and proof of Theorem \ref{TeoremaPrincipaleCasoSimmetrico}}
\label{Section:GSymmetricMin}

As defined in the introduction we consider a family $(x_p^-)$ of minimum points of $u_p$.

By Lemma \ref{lemma:BoundEnergia} we have that $p|u_p(x_p^-)|^{p-1}\rightarrow +\infty$ as $p\rightarrow +\infty$. So defining $\mu_p^-$ by $(\mu_p^-)^{-2}:=p|u_p(x_p^-)|^{p-1} $, we have by $(\mathcal{P}_3^1)$ that
\begin{equation}\label{minimVaAZerotilde}
\frac{|x_p^+-x_p^-|}{\mu_p^-}\leq C
\end{equation}
for $p$ large. Moreover, since we already know that $\frac{d(x_p^+,NL_p)}{\mu_p^+}\rightarrow +\infty$ as $p\rightarrow +\infty,$ we deduce $\frac{|x_p^+-x_p^-|}{\mu_p^+}\rightarrow +\infty$ as $ p\rightarrow +\infty$, and in turn by \eqref{minimVaAZerotilde} we get
\begin{equation}
\label{rapportoMu}
\frac{\mu_p^+}{\mu_p^-}\rightarrow 0 \ \ \mbox{ as }\ p\rightarrow +\infty.
\end{equation}

Note that \eqref{minimVaAZerotilde} and \eqref{rapportoMu} more generally hold for any family of points $(x_p)$ such that $u_p(x_p)<0$ and $p|u_p(x_p)|^{p-1}\rightarrow +\infty$.

\

By Corollary \ref{prop:MinVaazero} we have
\bel\label{minimVaAZero}
 \frac{|x_p^-|}{\mu_p^-}\leq C,
\eel
so there are two possibilities: either $\fr{|x_p^-|}{\mu_p^-}\to\ell>0$ or $\fr{|x_p^-|}{\mu_p^-}\to0$ as $p\rightarrow +\infty$, up to subsequences. We will exclude the latter case.
\\

We start with a preliminary result:
\begin{lemma} \label{Lemma:scalingPreliminareOrigine}
For $x\in \frac{\Omega}{|x_p^-|}:=\{y\in\mathbb R^2\ :\ y |x_p^-|\in\Omega \}$ let us define the rescaled function
\[
w_p^-(x):=\frac{p}{u_p(x_p^-)}\left(u_p(|x_p^-|x) -u_p(x_p^-)\right).
\]
Then
\begin{equation}
\label{v1pv0versione2}
w_p^-\rightarrow \gamma\ \mbox{  in }\ C^1_{loc}(\R^2\setminus\{0\})\ \mbox{ as }\ p\rightarrow +\infty,
\end{equation}
 where $\gamma\in C^1(\R^2\setminus\{0\})$, $\gamma\leq0$, $\gamma(x_{\infty})=0$ for a point $x_{\infty}\in\partial B_1(0)$ and it is a solution to
 \[
 -\Delta \gamma =\ell^2 e^{\gamma} \ \mbox{ in }\ R^2\setminus\{0\}.
 \]
In particular $\gamma\equiv 0$ when  $\ell =0$.
\end{lemma}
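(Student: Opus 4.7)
A direct differentiation of $w_p^-$, using $-\Delta u_p=|u_p|^{p-1}u_p$, yields the equation
\[
-\Delta w_p^-(x)=\left(\frac{|x_p^-|}{\mu_p^-}\right)^{\!\!2}\left|1+\frac{w_p^-(x)}{p}\right|^{p-1}\!\!\left(1+\frac{w_p^-(x)}{p}\right)
\]
on $\Omega/|x_p^-|$, whose outer coefficient tends to $\ell^2$. Since $x_p^-$ is the global minimum of $u_p$ and $u_p(x_p^-)<0$, the numerator $u_p(|x_p^-|x)-u_p(x_p^-)\geq 0$ and the denominator is negative, forcing $w_p^-\leq 0$ throughout its domain. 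The identity $w_p^-(x_p^-/|x_p^-|)=0$ is immediate, and, up to a subsequence, $x_p^-/|x_p^-|\to x_\infty\in\partial B_1(0)$.

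The analytic core is a uniform bound on $|\Delta w_p^-|$ on compact subsets of $\R^2\setminus\{0\}$, which I would extract from $(\mathcal{P}_3^1)$. Substituting $x=|x_p^-|y$ in $p|x-x_p^+|^2|u_p(x)|^{p-1}\leq C$ and dividing by $p|u_p(x_p^-)|^{p-1}=(\mu_p^-)^{-2}$ gives
\[
\left|\frac{|x_p^-|y-x_p^+}{\mu_p^-}\right|^{2}\left|1+\frac{w_p^-(y)}{p}\right|^{p-1}\leq C.
\]
Corollary \ref{prop:MinVaazero} applied to $x_p^+$, combined with $\mu_p^+/\mu_p^-\to 0$ (recorded just before the lemma), yields $|x_p^+|/\mu_p^-\to 0$; together with $|x_p^-|/\mu_p^-\to\ell$ this shows that the prefactor $||x_p^-|y-x_p^+|/\mu_p^-$ is bounded below by a positive constant, uniformly in $y$ on any compact $K\Subset\R^2\setminus\{0\}$, as soon as $\ell>0$. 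This produces a uniform bound on $|1+w_p^-/p|^{p-1}$ and hence on the right-hand side of the PDE. In the borderline case $\ell=0$ I would rearrange the same inequality as $(|x_p^-|/\mu_p^-)^2|1+w_p^-/p|^{p-1}\leq C|x_p^-|^2/||x_p^-|y-x_p^+|^2$: the vanishing of the outer factor $(|x_p^-|/\mu_p^-)^2$ then compensates for any degeneration of the second factor and still yields control of the full nonlinearity.

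With $|\Delta w_p^-|$ uniformly controlled on compacts of $\R^2\setminus\{0\}$, I would mimic the splitting argument of \emph{Step 1} of Proposition \ref{prop:x1N}: on each compact $K\Subset\R^2\setminus\{0\}$ write $w_p^-=\varphi_p+\psi_p$ with $-\Delta\varphi_p=-\Delta w_p^-$ and $\varphi_p=0$ on $\partial K$ (uniformly bounded by standard elliptic theory) and $\psi_p$ harmonic with boundary datum $w_p^-$. The bound $w_p^-\leq 0$ forces $\psi_p\leq C$, while $\psi_p(x_p^-/|x_p^-|)=-\varphi_p(x_p^-/|x_p^-|)$ prevents $\psi_p$ from diverging to $-\infty$ near $x_\infty$; Harnack's inequality then yields local $L^\infty$-bounds on $\psi_p$, hence on $w_p^-$, and standard elliptic estimates upgrade the convergence to $C^1_{loc}(\R^2\setminus\{0\})$. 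Passing to the limit in the PDE, using $(1+w_p^-/p)^p\to e^\gamma$ pointwise together with dominated convergence, gives $-\Delta\gamma=\ell^2 e^\gamma$ on $\R^2\setminus\{0\}$, while $\gamma\leq 0$ and $\gamma(x_\infty)=0$ survive the limit. The case $\ell=0$ is then immediate: $\gamma$ is harmonic and nonpositive on $\R^2\setminus\{0\}$ and attains its maximum $0$ at the interior point $x_\infty$ (since $|x_\infty|=1$), so the strong maximum principle forces $\gamma\equiv 0$. The main obstacle I anticipate is precisely the regime $\ell=0$ with $|x_p^+|$ comparable to $|x_p^-|$, where the clean bound used for $\ell>0$ breaks down and one must argue via the ``$\rho_p$-absorbed'' form of the estimate above.
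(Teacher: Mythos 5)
Your overall architecture matches the paper's: the same equation for $w_p^-$, the same normalization facts ($w_p^-\leq 0$, $w_p^-(x_p^-/|x_p^-|)=0$), the same splitting-plus-Harnack argument borrowed from \textit{Step 1} of Proposition \ref{prop:x1N}, and the same maximum-principle conclusion when $\ell=0$. The one step that does not go through is the uniform bound on $|\Delta w_p^-|$ on compacts of $\R^2\setminus\{0\}$, and it fails exactly in the case $\ell=0$ that the lemma exists to treat (Proposition \ref{prop:NLp+l>0} uses the lemma precisely to rule out $\ell=0$). From $(\mathcal{P}_3^1)$ alone you get
\[
|\Delta w_p^-(y)|\;\leq\;\frac{C}{\bigl|y-x_p^+/|x_p^-|\bigr|^{2}},
\]
a bound whose singularity sits at $x_p^+/|x_p^-|$, not at the origin. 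When $\ell>0$ one has $|x_p^+|=o(\mu_p^+)=o(\mu_p^-)=o(|x_p^-|)$, so this point drifts to $0$ and the bound is uniform on $K\Subset\R^2\setminus\{0\}$; that part of your argument is fine. But when $\ell=0$ both $|x_p^+|$ and $|x_p^-|$ are $o(\mu_p^-)$ with no mutual comparison available at this stage, so $x_p^+/|x_p^-|$ may sit at distance of order $1$ from the origin, inside your compact set. Your proposed remedy --- that the vanishing factor $(|x_p^-|/\mu_p^-)^2$ ``compensates'' --- does not work: writing out the product, the $(\mu_p^-)^2$ cancels exactly and you are left with the same bound $C\,|x_p^-|^2/\bigl||x_p^-|y-x_p^+\bigr|^2=C/\bigl|y-x_p^+/|x_p^-|\bigr|^2$, which is unbounded near that point; meanwhile $|1+w_p^-/p|^{p-1}$ can genuinely blow up there (near $x_p^+$ it is of order $(\mu_p^-/\mu_p^+)^2\to+\infty$), so nothing is being absorbed.

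The paper's fix is small but essential: instead of invoking $(\mathcal{P}_3^1)$ directly, apply Corollary \ref{prop:MinVaazero} to the point $\xi_p:=|x_p^-|y$ itself. That corollary (which repackages $(\mathcal{P}_3^1)$ together with $|x_p^+|/\mu_p^+\leq C$ and $\mu_p^+\leq\mu_{\xi_p}$) gives $|\xi_p|/\mu_{\xi_p}\leq C$, i.e. $|x_p^-|/\mu_{\xi_p}\leq C/|y|$, hence $|\Delta w_p^-(y)|\leq c_\infty C^2/|y|^2$: a bound whose singularity is re-centered at the origin and which is therefore uniform on every $K\Subset\R^2\setminus\{0\}$, for every value of $\ell$. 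With that replacement the rest of your argument goes through as written.
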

\begin{proof}
\eqref{minimVaAZero} implies that $|x_p^-|\rightarrow 0$, so it follows that the set $\frac{\Omega}{|x_p^-|}\to\R^2$ as $p\to+\infty$.

By definition we have
\begin{equation}\label{conB}
w_p^-\leq 0\  \ \mbox{  and  }\ \  w_p(\frac{x_p^-}{|x_p^-|})=0
\end{equation}
and \[w_{p}^-=-p\quad\textrm{on $\partial \left( \frac{\Omega}{|x_p^-|} \right)$}.\]
Moreover, for $x\in \frac{\Omega}{|x_p^-|}$ we define $\xi_p:= |x_p^-|x$ and $\mu_{\xi_p}$ as $\mu_{\xi_p}^{-2}:= p|u_p(\xi_p)|^{p-1}$. Thanks to \eqref{problem} we then have
\begin{eqnarray}
\label{eq:RiscLineaNod}
|-\Delta w_p^-(x) |&=& \frac{p|x_p^-|^2|\upp(\xi_p)|^p}{|\upp(x^-_p)|}\nonumber
\\
&=& \frac{|\upp(\xi_p)|}{|\upp(x^-_p)|}\,\frac{|x^-_p|^2}{\mu_{\xi_p}^2}\nonumber
\\
&\leq& c_\infty \frac{|x^-_p|^2}{\mu_{\xi_p}^2},
\end{eqnarray}
where  $c_\infty:=\lim_p\|\upp\|_\infty$.
Then, observing that $\frac{|x_p^-|}{\mu_{\xi_p}}\leq \frac{C}{|x|}$  by  Corollary \ref{prop:MinVaazero} applied to $\xi_p$, we have
\be
|-\Delta w_p^-(x) |
\leq \frac{c_\infty C^2}{|x|^2}.
\ee

Namely for any $R>0$
\begin{equation}\label{boundlaplacianonew}|-\Delta w_p^-|\leq c_\infty C^2R^2\ \ \  \  \mbox{ in } \frac{\Omega}{|x_p^-|}\setminus B_{\frac{1}{R}}(0).\end{equation}

So, similarly as in \textit{Step 1} of the proof of Proposition  \ref{prop:x1N} (using now that  $w_p^-(\frac{x_p^-}{|x_p^-|})=0$), it follows that  for any $R>1$ ($\frac{x_p^-}{|x_p^-|}\in \partial B_1(0)\subset B_R(0)\setminus B_{\frac{1}{R}}(0)$ for $R>1$), $w_p^-$ is uniformly bounded in $B_R(0)\setminus B_{\frac{1}{R}}(0)$.

After passing to a subsequence, standard elliptic theory applied to the following equation
\bel\label{w-p}
-\lap w^-_p(x)=\frac{|x^-_p|^2}{(\mu^-_p)^2}\left(1+\frac{w^-_p(x)}{p}\right)\left|1+\frac{w^-_p(x)}{p}\right|^{p-1}
\eel
gives that $w_{p}^-$ is bounded in $ C^2_{loc}(\mathbb R^2\setminus\{0\})$ .
Hence \eqref{v1pv0versione2} and the properties of $\gamma$ follow.
\\

In particular when $\ell =0$ it follows that $\gamma$ is harmonic in $\R^2\setminus\{0\}$ and $\gamma(x_\infty)=0$ for some point $x_\infty\in\partial B_1(0)$, therefore by the maximum principle we obtain $\gamma\equiv0$.
\end{proof}

\

\

\begin{proposition}\label{prop:NLp+l>0}
There exists $\ell>0$ such that
\[
\fr{|x^-_p|}{\mu_p^-}\to\ell \ \ \ \mbox{ as }\ p\rightarrow +\infty.
\]
\end{proposition}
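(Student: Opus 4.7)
The strategy is a proof by contradiction: assume, along a subsequence, that $|x_p^-|/\mu_p^-\to 0$. The goal is to derive an integral identity that forces the limiting height $c_\infty^+:=\lim_{p\to+\infty} u_p(x_p^+)$ of the positive bubble to vanish, contradicting Lemma \ref{lemma:BoundEnergia}(ii).

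\emph{Localization of $\mathcal{N}_p^+$.} By Lemma \ref{Lemma:scalingPreliminareOrigine} with $\ell=0$, $w_p^-\to 0$ in $C^1_{loc}(\mathbb{R}^2\setminus\{0\})$. Unravelling the definition, $u_p(z)/u_p(x_p^-)\to 1$ uniformly on every annulus $\{r|x_p^-|\leq|z|\leq R|x_p^-|\}$ with $0<r<R$; since $u_p(x_p^-)\to c_\infty^-<0$, $u_p$ is strictly negative on such annuli for $p$ large, and $NL_p$ cannot meet them. By the connectedness of $\mathcal{N}_p^+$ together with $O\in\mathcal{N}_p^+$ (Corollary \ref{regioneNodaleInterna}), we conclude $\mathcal{N}_p^+\subset B_{r|x_p^-|}(0)$ for every $r>0$ and $p$ large, so that the rescaled region $\widetilde{\mathcal{N}}_p^+:=(\mathcal{N}_p^+-x_p^-)/\mu_p^-$ shrinks to $\{0\}$ as $p\to+\infty$.

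\emph{Liouville limit of $v_p^-$.} I would next show $v_p^-\to U$ in $C^1_{loc}(\mathbb{R}^2\setminus\{0\})$, with $U$ the regular Liouville profile \eqref{v0}. Indeed, for any compact $K\subset\mathbb{R}^2\setminus\{0\}$, the previous step gives $K\subset\widetilde{\mathcal{N}}_p^-$ for $p$ large, hence $v_p^-\leq 0$ and $|{-\Delta v_p^-}|\leq 1$ on $K$; coupled with $v_p^-(0)=0$ and $0\in\widetilde{\mathcal{N}}_p^-$, standard elliptic estimates and the Chen--Li classification identify $U$ as the limit, mirroring Step 1 of Proposition \ref{prop:x1N}.

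\emph{Integral identity yielding the contradiction.} For $R>0$ fixed, compute $I_p:=\int_{B_R(0)}-\Delta v_p^-\,dy$ in two different ways. On the one hand, by the divergence theorem together with the $C^1$-convergence $v_p^-\to U$ on $\partial B_R(0)$ (bounded away from $0$),
\[
I_p=-\int_{\partial B_R(0)}\partial_\nu v_p^-\,ds\;\longrightarrow\;\int_{B_R(0)}e^U\,dy.
\]
On the other hand, since $\widetilde{\mathcal{N}}_p^+\subset B_R(0)$ for $p$ large, split $I_p=I_p^-+I_p^+$ along the rescaled nodal line. On $\widetilde{\mathcal{N}}_p^-$ the integrand $(u_p/u_p(x_p^-))^p$ belongs to $[0,1]$ and converges pointwise to $e^U$, so dominated convergence yields $I_p^-\to\int_{B_R(0)}e^U\,dy$. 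On $\widetilde{\mathcal{N}}_p^+$, reversing the change of variable and using $(\mu_p^-)^{-2}=p|u_p(x_p^-)|^{p-1}$,
\[
I_p^+=-\frac{p}{|u_p(x_p^-)|}\int_{\mathcal{N}_p^+}u_p^p\,dz\;\longrightarrow\;-\frac{8\pi\,c_\infty^+}{c_\infty^-},
\]
where we used the Liouville mass $p\int_{\mathcal{N}_p^+}u_p^p\,dz\to 8\pi\,c_\infty^+$ for the regular bubble at $x_p^+$ (Proposition \ref{rem:x^+=x_1}) and $c_\infty^\pm:=\lim_{p\to+\infty}|u_p(x_p^\pm)|>0$. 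Equating the two expressions for $\lim_p I_p$ forces $8\pi c_\infty^+/c_\infty^-=0$, which contradicts $c_\infty^+\geq 1$ from Lemma \ref{lemma:BoundEnergia}(ii).

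\emph{Main obstacle.} The most delicate point is the Liouville convergence $v_p^-\to U$ in $C^1_{loc}(\mathbb{R}^2\setminus\{0\})$: the $G$-symmetric copies $gx_p^-$ produce $|G|$ critical points of $v_p^-$ collapsing onto the excluded point $0$, and the rescaled positive region simultaneously accumulates there, so one must verify carefully that the elliptic/Liouville compactness still goes through on compacta avoiding $\{0\}$.
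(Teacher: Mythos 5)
Your overall strategy---a contradiction obtained from a divergence-theorem balance between the mass of the positive bubble and a vanishing boundary term---is the same in spirit as the paper's, but the step your identity rests on is precisely the one that fails. You run the divergence theorem at the scale $\mu_p^-$, so you need $v_p^-\to U$ in $C^1$ near $\partial B_R(0)$ and pointwise on $B_R(0)\setminus\{0\}$, with $U$ the \emph{regular} bubble normalized by $U(0)=0$. This cannot be obtained by ``mirroring Step 1 of Proposition \ref{prop:x1N}'': there, the Harnack argument that upgrades ``bounded Laplacian $+$ bounded above'' to ``uniformly bounded'' uses crucially that the normalization point ($v_{1,p}(0)=0$) lies \emph{inside} the region where the Laplacian is controlled. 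Here the normalization point of $v_p^-$ is the origin, which under your contradiction hypothesis $\ell=0$ is exactly the point onto which the rescaled positive region collapses, and on that region $|-\Delta v_p^-|$ is only bounded by a quantity of order $(\mu_p^-/\mu_p^+)^2\to+\infty$. On an annulus $\{r\le |x|\le R\}$ you therefore have no point at which $v_p^-$ is bounded below, so Harnack leaves open the alternative $v_p^-\to-\infty$ uniformly on annuli, in which case neither your boundary term nor $I_p^-$ is controlled. Even granting uniform bounds, the limit a priori solves $-\Delta V=e^V$ only on $\R^2\setminus\{0\}$ and could be singular at the origin: removability of the singularity and recovery of the normalization $V(0)=0$ from a point collapsing into the puncture are exactly the difficulties the paper fights in Proposition \ref{prop:scalingNegativo}, where the argument works only because $\ell>0$ keeps the normalization point $x_p^-/\mu_p^-$ at positive distance from the puncture. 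You flag this as ``the main obstacle'' but do not overcome it, so the proof is incomplete; note also that Proposition \ref{rem:x^+=x_1} by itself gives the Liouville mass only on balls $B_{R\mu_p^+}(x_p^+)$, not the full limit $p\int_{\mathcal N_p^+}u_p^p\to 8\pi c_\infty^+$ you quote (though for the contradiction a lower bound would suffice).

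The paper's proof sidesteps all of this by applying the divergence theorem at the much smaller scale $|x_p^-|$, i.e.\ on $B_{|x_p^-|}(0)$, where only quantities already under control enter: the boundary term equals $\int_{\partial B_1(0)}u_p(x_p^-)\,\nabla w_p^-\cdot\nu\,d\sigma=o_p(1)$ by Lemma \ref{Lemma:scalingPreliminareOrigine} alone (when $\ell=0$ one has $w_p^-\to0$ in $C^1_{loc}(\R^2\setminus\{0\})$, and $\partial B_1(0)$ stays away from the puncture); the contribution of $\mathcal N_p^-$ is $O\bigl(|x_p^-|^2/(\mu_p^-)^2\bigr)=o_p(1)$ because $|u_p|\le|u_p(x_p^-)|$ there; and the contribution of $\mathcal N_p^+$ is bounded below by the mass of the bubble at $x_p^+$, since $B_{\mu_p^+}(x_p^+)\subset B_{|x_p^-|}(0)$. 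If you want to salvage your version you would first have to establish the boundedness of $v_p^-$ on annuli, which is essentially the content of Proposition \ref{prop:scalingNegativo} and is proved in the paper only after, and with the help of, the present proposition.
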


\begin{proof}
By  Corollary \ref{prop:MinVaazero} we know that $\fr{|x^-_p|}{\mu^-_p}\to\ell\in[0,+\infty)$. Let us suppose by contradiction that $\ell=0$.  Then  Lemma \ref{Lemma:scalingPreliminareOrigine} implies that \begin{equation}\label{wVaaZeroEqua}
w_p^-\rightarrow 0\ \mbox{ in }\ C^1_{loc}(\R^2\setminus\{0\}) \ \ \ \mbox{ as }\ p\rightarrow +\infty.
\end{equation}

\

By \eqref{problem}, applying the divergence theorem in $B_{|x^-_p|}(0)$ we get
\bel\label{ABCassurdo}
p\int_{\partial B_{|x^-_p|}(0)}\nabla \upp(y)\cdot\fr{y}{|y|}\,d\sigma(y)=p\int_{B_{|x^-_p|}(0)\cap\mathcal{N}^-_p}|\upp(x)|^p\,dx-p\int_{B_{|x^-_p|}(0)\cap\mathcal{N}^+_p}|\upp(x)|^p\,dx.
\eel
Scaling $\upp$ with respect to $|x^-_p|$ as in Lemma \ref{Lemma:scalingPreliminareOrigine}, by \eqref{wVaaZeroEqua} we obtain
\begin{eqnarray}\label{primo}
\left|p\int_{\partial B_{|x^-_p|}(0)}\nabla \upp(y)\cdot\fr{y}{|y|}\,d\sigma(y)\right|&=&\left|p\int_{\partial B_{1}(0)}|x^-_p|\nabla \upp(|x^-_p|x)\cdot\fr{x}{|x|}\,d\sigma(x)\right|\nonumber\\
&=&\left|\int_{\partial B_{1}(0)}\upp(x^-_p)\,\nabla w^-_p(x)\cdot\fr{x}{|x|}\,d\sigma(x)\right|\nonumber\\
&\leq& |\upp(x^-_p)|2\pi \sup_{|x|=1}|\nabla w^-_p(x)|=o_p(1).
\end{eqnarray}

Now we want to estimate the right hand side in \eqref{ABCassurdo}. We first observe that scaling around $|x^-_p|$ with respect to $\mu^-_p$ we get
\begin{eqnarray}\label{secondo}
p\int_{B_{|x^-_p|}(0)\cap\mathcal{N}^-_p}|\upp(x)|^p\,dx&=&p\int_{B_{1}(0)\cap\fr{\mathcal{N}^-_p}{|x^-_p|}}|\upp(|x^-_p|y)|^p|x^-_p|^2\,dy\nonumber\\
&\leq& c_\infty \int_{B_1(0)\cap\fr{\mathcal{N}^-_p}{|x^-_p|}}\fr{|\upp(|x^-_p|y)|^{p-1}}{|\upp(x^-_p)|^{p-1}}\fr{|x^-_p|^2}{(\mu_p^-)^2}dx\nonumber\\
&=&o_p(1),
\end{eqnarray}
where in the last equality we have used that $\fr{|\upp(|x^-_p|y)|^{p-1}}{|\upp(x^-_p)|^{p-1}}\leq1$, since $|x^-_p|y\in \mathcal{N}^-_p$ and that by assumption $\fr{|x^-_p|}{\mu_p^-}\to0$.

Next we claim that there exists $\bar p$ such that for any $p\geq \bar p$
\bel\label{inclusione}
B_{\mu^+_p}(x^+_p)\subset B_{|x^-_p|}(0).
\eel
Indeed, Corollary \ref{cor:nonvedoNL} implies that
\[+\infty = \lim_p \frac{d(x_p^+,NL_p)}{\mu_p^+}\leq \lim_p \frac{|x_p^+-x_p^-|}{\mu_p^+}\leq \lim_p\frac{|x_p^+|}{\mu_p^+}+\lim_p \frac{|x_p^-|}{\mu_p^+}=\lim_p\frac{|x_p^-|}{\mu_p^+},\]
where the last equality follows from Proposition \ref{prop:MaxVaazeroVelocemente} (i.e. $\frac{|x_p^+|}{\mu_p^+}\rightarrow 0$). Hence for any $x\in B_1(0)$ we have
\[
\fr{|x^+_p+\mu^+_p x|}{|x^-_p|}\leq\fr{|x^+_p|}{|x^-_p|}+\fr{\mu^+_p}{|x^-_p|}\leq\fr{2\mu^+_p}{|x^-_p|}\to0\ \ \mbox{ as }\ p\to+\infty,
\]
and so \eqref{inclusione} is proved.\\

Hence by \eqref{inclusione} and scaling around $x^+_p$ with respect to $\mu^+_p$ we obtain
\bel\label{terzo}
p\int_{B_{|x^-_p|}(0)\cap\mathcal{N}^+_p}|\upp(x)|^p\,dx\geq p\int_{B_{\mu^+_p}(x^+_p)}|\upp(x)|^{p}dx=c_\infty\int_{B_1(0)}e^{U}dx+o_p(1).
\eel
Collecting \eqref{ABCassurdo}, \eqref{primo}, \eqref{secondo} and \eqref{terzo} we get clearly a contradiction.

\end{proof}

\

\

Next we show that the nodal line shrinks to the origin faster than $\mu_p^-$ as $p\rightarrow +\infty$.

\begin{proposition}
\label{prop:NodalLineShrinks}
We have
\[
\frac{\max\limits_{y_p\in NL_p}|y_p|}{\mu_p^-}\rightarrow 0 \ \ \mbox{ as }\ p\rightarrow +\infty.
\]
\end{proposition}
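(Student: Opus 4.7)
The plan is to exploit the $C^1_{\mathrm{loc}}$-convergence of $w_p^-$ provided by Lemma \ref{Lemma:scalingPreliminareOrigine} together with the connectedness of the positive nodal region $\mathcal N_p^+$ to show that $\mathcal N_p^+$ is contained in an arbitrarily small disk around the origin, measured in the scale $|x_p^-|$ (which, by Proposition \ref{prop:NLp+l>0}, is comparable to $\mu_p^-$).

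The core claim I would establish first is: for every fixed $R>0$, the entire circle $\Gamma_R^p:=\{|y|=R\,|x_p^-|\}$ lies in $\mathcal N_p^-$ for $p$ large. Indeed $\{|x|=R\}$ is a compact subset of $\mathbb R^2\setminus\{0\}$, so by Lemma \ref{Lemma:scalingPreliminareOrigine} the convergence $w_p^-\to \gamma$ is uniform there; in particular $|w_p^-|$ is uniformly bounded on $\{|x|=R\}$, giving
\[ \frac{u_p(|x_p^-|x)}{u_p(x_p^-)}=1+\frac{w_p^-(x)}{p}\ge \frac12\qquad\text{for } |x|=R \text{ and } p \text{ large.} \]
Because Lemma \ref{lemma:BoundEnergia}(ii) guarantees $|u_p(x_p^-)|\ge \lambda_1^{1/(p-1)}\to 1$, the sign of $u_p$ on $\Gamma_R^p$ agrees with that of $u_p(x_p^-)$, so $u_p<0$ on $\Gamma_R^p$ as required.

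To conclude, I would then run a simple topological argument. Since the assumption is that $u_p$ has only two nodal regions, $\mathcal N_p^+$ is connected; it contains the origin by Corollary \ref{regioneNodaleInterna} and is disjoint from $\Gamma_R^p\subset \mathcal N_p^-$, hence $\mathcal N_p^+$ lies in the connected component of $\Omega\setminus\Gamma_R^p$ that contains $0$, namely the disk $B_{R|x_p^-|}(0)$. Taking boundaries yields $NL_p\subset \overline{B_{R|x_p^-|}(0)}$, so $\max_{y_p\in NL_p}|y_p|\le R\,|x_p^-|$ for $p\ge p_R$. Sending $R\to 0$ and using $|x_p^-|/\mu_p^-\to \ell<\infty$ from Proposition \ref{prop:NLp+l>0} gives the desired conclusion. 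I do not foresee a substantive obstacle; the only point that requires care is ensuring that the sign of $u_p$ on $\Gamma_R^p$ does not degenerate in the limit, which is precisely what the lower bound on $|u_p(x_p^-)|$ from Lemma \ref{lemma:BoundEnergia}(ii) provides.
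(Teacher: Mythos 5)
Your proposal is correct and rests on the same key ingredient as the paper's own proof, namely the $C^1_{loc}(\R^2\setminus\{0\})$ convergence of $w_p^-$ to the finite limit $\gamma$ from Lemma \ref{Lemma:scalingPreliminareOrigine}, combined with $O\in\mathcal{N}_p^+$ and a connectedness argument. The difference is only in the topological packaging: the paper evaluates $w_p^-$ at nodal points (where it equals $-p$) to obtain a $0/\infty$ dichotomy for $|y_p|/|x_p^-|$ and then runs an intermediate-value argument along the connected nodal line, whereas you control the sign of $u_p$ on the whole circles $\{|y|=R|x_p^-|\}$ and trap the connected positive nodal domain inside the corresponding disk.
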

\begin{proof} By Proposition \ref{prop:NLp+l>0} it is enough to prove that
\[
\frac{\max\limits_{y_p\in NL_p}|y_p|}{|x_p^-|}\rightarrow 0 \ \ \mbox{ as }\ p\rightarrow +\infty.
\]
First we show that, for any $y_p\in NL_p$, the following alternative holds:
\begin{equation}\label{AlternativaEq}
\mbox{either }\ \ \frac{|y_p|}{|x_p^-|}\rightarrow 0\ \ \mbox{ or }\ \ \ \frac{|y_p|}{|x_p^-|}\rightarrow +\infty\ \ \mbox{ as }p\rightarrow +\infty.
\end{equation}

\

Indeed assume by contradiction that $\frac{|y_p|}{|x_p^-|}\rightarrow m\in (0,+\infty)$.  Then $w_p^-(\frac{y_p}{|x_p^-|} )=-p\rightarrow -\infty$. But we have proved in Lemma \ref{Lemma:scalingPreliminareOrigine}  that $w_p^-(\frac{y_p}{|x_p^-|} )\rightarrow\gamma(y_{m})\in\R$, where $y_{m}$ is such that $|y_{m}|=m>0$, and this gives a contradiction.

To conclude the proof we have then to exclude the second alternative in \eqref{AlternativaEq}.
For $y_p\in NL_p$, let us assume by contradiction that $\frac{|y_p|}{|x_p^-|}\rightarrow +\infty$ and let us observe that
 \begin{equation}
 \label{unPuntoSullaLineaVaAZero}
 \exists\  z_p\in NL_p
 \ \mbox{  such that }\ \
\frac{|z_p|}{|x_p^-|}\rightarrow 0\ \mbox{ as }\ p\rightarrow +\infty.
\end{equation}
Indeed in the previous section we have shown that $O\in \mathcal N_p^+$,  hence there exists $t_p\in (0,1)$ such that $z_p:=t_p x_p^- \in NL_p$. Since $\frac{|z_p|}{|x_p^-|}<1$, by \eqref{AlternativaEq} we get \eqref{unPuntoSullaLineaVaAZero}.\\

Then for any $M>0$ there exists $\alpha_p^M\in NL_p$ such that $\frac{|\alpha_p^M|}{|x_p^-|}\rightarrow M$ and this is in contradiction with \eqref{AlternativaEq}.
\end{proof}

\

\

Finally we can analyze the local behavior of $u_p$ around the minimum point $x_p^-$. Note that by Proposition \ref{prop:MaxVaazeroVelocemente} and Proposition \ref{prop:NLp+l>0} we can already claim that the rescaling $v_p^-$ about $x_p^-$ (see \eqref{scalingNegativo} below) cannot converge to the regular solution $U$ of the Liouville problem \eqref{LiouvilleEquation}, such that $U(0)=0$ in $\R^2\setminus\{0\}$.

\begin{proposition}
\label{prop:scalingNegativo}
The scaling of $u_p$ around $x_p^-$
\begin{equation}
\label{scalingNegativo}
v_p^-(x):=\frac{p}{u_{p}(x_{p}^-)}\left( u_{p}(\mu_{p}^- x+x_{p}^-)-u_{p}(x_{p}^-) \right)
\end{equation}
defined on $\widetilde\Omega_p^-$ converges (passing to a subsequence)  in $C^1_{loc}(\mathbb R^2\setminus \{x_{\infty}\})$ to the function
\[V_{\ell}(x):=\log\left(\frac{2\alpha^2\beta^{\alpha}|x-x_{\infty}|^{\alpha -2}}{(\beta^{\alpha}+|x-x_{\infty}|^{\alpha})^2} \right),
\]
where $\alpha=\sqrt{2\ell^2+4}$, $\beta=\ell \left(\frac{\alpha+2}{\alpha-2} \right)^{1/\alpha}$ and $x_{\infty}\in\mathbb R^2$, $|x_{\infty}|=\ell$ and $\ell=\lim_p\fr{|x^-_p|}{\mu^-_p}>0$.

The function $V(x):=V_{\ell}(x+x_{\infty})$ is a radial singular solution of \eqref{LiouvilleSingularEquation} for $H=H(\ell)$.
\end{proposition}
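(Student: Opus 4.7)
My plan is to deduce the convergence of $v_p^-$ from the already-established convergence of $w_p^-$ in Lemma \ref{Lemma:scalingPreliminareOrigine}, and then identify the limit via a classification of radial singular Liouville solutions. The starting point is the algebraic identity
\[
v_p^-(x) \;=\; w_p^-\!\Bigl(\tfrac{\mu_p^-}{|x_p^-|}\,x \,+\, \tfrac{x_p^-}{|x_p^-|}\Bigr),
\]
which relates the two rescalings. Up to subsequences, define $x_\infty := -\lim_{p\to+\infty} x_p^-/\mu_p^-$; Proposition \ref{prop:NLp+l>0} gives $|x_\infty|=\ell>0$, and consequently $x_p^-/|x_p^-|\to -x_\infty/\ell$ while $\mu_p^-/|x_p^-|\to 1/\ell$. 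Thus the affine map $T_p(x) := (\mu_p^-/|x_p^-|)\,x + x_p^-/|x_p^-|$ converges uniformly on compacts to $T(x) := (x-x_\infty)/\ell$, and $T$ sends any compact subset of $\R^2\setminus\{x_\infty\}$ into a compact subset of $\R^2\setminus\{0\}$. Combining this with the $C^1_{loc}(\R^2\setminus\{0\})$ convergence $w_p^-\to\gamma$ from Lemma \ref{Lemma:scalingPreliminareOrigine}, one immediately obtains $v_p^-(x)\to \gamma\bigl((x-x_\infty)/\ell\bigr)$ in $C^1_{loc}(\R^2\setminus\{x_\infty\})$.

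The next task is to characterize $\gamma$. By Lemma \ref{Lemma:scalingPreliminareOrigine} the function $\gamma$ is nonpositive, solves $-\Delta \gamma = \ell^2 e^\gamma$ on $\R^2\setminus\{0\}$, and achieves the value $0$ at some point of $\partial B_1$. By $G$-symmetry of $u_p$ (and hence of $w_p^-$), the function $\gamma$ is $G$-invariant as well. A Fatou argument analogous to the one in Lemma \ref{lemma:BoundEnergia}(iii), together with the energy bound \eqref{assumptionEnergy}, shows $\ell^2 e^\gamma \in L^1(\R^2)$. The key additional information concerns the singularity at $0$: on the nodal line, $w_p^- = -p$, and by Proposition \ref{prop:NodalLineShrinks} the rescaled nodal line $NL_p/|x_p^-|$ shrinks to the origin, forcing $\gamma(y)\to -\infty$ as $y\to 0$. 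The positive bubble $u_p^+$, when rescaled by $|x_p^-|$, concentrates at $0$ with a fixed mass that can be computed via the divergence-theorem argument used in the proof of Proposition \ref{prop:NLp+l>0}. Passing to the limit in the distributional sense, $\gamma$ extends to a global solution
\[
-\Delta \gamma \;=\; \ell^2\,e^\gamma \,+\, \tilde H\,\delta_0 \qquad \text{in } \R^2,
\]
for an explicit negative constant $\tilde H$.

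The last step is to recognize $\gamma$ as a specific radial function and then unfold the change of variables. Because $\tilde H/(2\pi)$ is of the form $2-\sqrt{2\ell^2+4}$, which is not an integer, classical classification results for singular Liouville equations of Chen--Lin / Prajapat--Tarantello type force $\gamma$ to be radially symmetric around the origin. The radial problem $-\gamma''-\gamma'/r = \ell^2 e^\gamma$ on $(0,\infty)$ with $re^\gamma\in L^1$ admits a two-parameter family of explicit solutions; the normalizations $\gamma(z_\infty)=0$ at some $|z_\infty|=1$ and the prescribed strength $\tilde H$ at the origin pin down both parameters, producing a formula of the shape $\gamma(y)=\log\!\bigl(2\alpha^2\tilde\beta^\alpha |y|^{\alpha-2}/(\tilde\beta^\alpha+|y|^\alpha)^2\bigr)$ with $\alpha=\sqrt{2\ell^2+4}$. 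Composing with the inverse of $T$ yields $V_\ell(x)=\gamma((x-x_\infty)/\ell)$ in the stated form, with $\beta=\ell\bigl((\alpha+2)/(\alpha-2)\bigr)^{1/\alpha}$; the translated function $V(x):=V_\ell(x+x_\infty)$ is then radial around $0$ and satisfies \eqref{LiouvilleSingularEquation} with $H = H(\ell) = 2\pi(2-\alpha)<0$.

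The main obstacle is the identification of $\gamma$: the $G$-invariance alone provides only a discrete rotational symmetry, far from radial, so radiality of $\gamma$ must come from the singular Liouville classification. This in turn requires a careful quantitative control of how the positive concentration of $u_p^+$ at the origin contributes the Dirac mass $\tilde H\delta_0$ to the distributional equation for $\gamma$ — a computation that balances the spike energy of $u_p^+$ against the truncation of $v_p^-$ along the nodal line and thereby yields the precise singular coefficient needed to apply the classification.
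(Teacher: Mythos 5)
Your reduction of $v_p^-$ to $w_p^-$ via the affine change of variables is fine and is essentially equivalent to the paper's device of working with the translate $s_p^-(x)=v_p^-(x-x_p^-/\mu_p^-)$, and your final ODE step matches the paper. The genuine gap is in how you obtain radial symmetry of the limit. You propose to compute the Dirac mass $\tilde H$ at the origin by ``balancing the spike energy of $u_p^+$ against the truncation along the nodal line'' and then to invoke the Prajapat--Tarantello classification using the claim that $\tilde H/(2\pi)=2-\sqrt{2\ell^2+4}$ is not an integer. This is circular: the identity $H=2\pi(2-\alpha)$ with $\alpha=\sqrt{2\ell^2+4}$ is precisely what one obtains \emph{after} knowing that $V$ is radial, by solving the radial ODE with $V(\ell)=V'(\ell)=0$. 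Before radiality is known, the only access to the Dirac mass is through $\lim_p \frac{p}{|u_p(x_p^-)|}\int_{\mathcal{N}_p^+}|u_p|^p\,dx$, which would require the limit of $\|u_p^+\|_\infty/\|u_p^-\|_\infty$ and the exclusion of further positive bumps --- information the paper explicitly does not have (these are the conjectures \textbf{(C1)}, \textbf{(C3)} of Section \ref{sectionOpenProblems}). Moreover, even granting that value, non-integrality of $(\alpha-2)/2$ can fail for particular $\ell$ (e.g.\ $\ell^2=6$ gives $\eta=1$). The paper rules out the non-radial alternative of \cite{PrajapatTarantello} by a completely different mechanism: a non-radial solution must be $(\eta+1)$-symmetric with $\eta\in\N$, the $G$-symmetry of $s_p^-$ forces $\eta+1\ge|G|\ge 4e$ and hence $\int_{\R^2}e^{V}dx\ge 4e\cdot 8\pi$, while \eqref{assumptionEnergy}, \eqref{LowerEnergy} and Proposition \ref{prop:NodalLineShrinks} give $\int_{\R^2}e^{V}dx\le(\alpha-1)\cdot 8\pi e<4\cdot 8\pi e$. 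This is exactly where the hypotheses $|G|\ge 4e$ and $\alpha<5$ enter, and your argument never uses them --- a sign that the step is not actually closed.

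A secondary point: your assertion that $w_p^-=-p$ on the rescaled nodal line ``forces $\gamma(y)\to-\infty$ as $y\to0$'' does not follow, because by Proposition \ref{prop:NodalLineShrinks} the rescaled nodal line collapses to the single excluded point $0$, so for every fixed $y\ne0$ the points $|x_p^-|y$ lie in $\mathcal{N}_p^-$ for large $p$ and no divergence of $w_p^-(y)$ is forced. The paper's route to the singular equation is different: one first shows $e^{V}\in L^1(\R^2)$ by Fatou and the energy bound, then excludes a removable singularity by noting that otherwise $V=U(\cdot-x_\ell)$, whence $v_p^-\to U$ in $C^1_{loc}(\R^2\setminus\{-x_\ell\})$ and Proposition \ref{prop:MaxVaazeroVelocemente} would force $|x_p^-|/\mu_p^-\to0$, contradicting Proposition \ref{prop:NLp+l>0}; only then do \cite{ChenLi2,ChouWan1,ChouWan2} yield $-\lap V=e^{V}-4\pi\eta\delta_0$ with $\eta>0$ but a priori unknown.
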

\begin{proof}
Let us consider the translations of \eqref{scalingNegativo}:
\[
s_p^-(x):=v_p^-\left(x-\frac{x_p^-}{\mu_p^-} \right)=\fr{p}{\upp(x_{p}^-)}(\upp(\mu_p^- x)-\upp(x_p^-)),\quad \quad x\in \frac{\Omega}{\mu_p^-}
\]
which solve
\[
-\Delta s_p^-(x)=\left|1+\frac{s_p^-(x)}{p}    \right|^{p-1}\left(1+\frac{s_p^-(x)}{p}    \right),
\]
and
\[s^-_p(\fr{x^-_p}{\mu^-_p})=0,\]
\[s^-_p\leq 0.\]
Observe that $\frac{\Omega}{\mu_p^-}\rightarrow \mathbb R^2$ as $p\to+\infty$.

We claim that for any fixed $r>0$, $|-\Delta s_p^-|$ is bounded in $\frac{\Omega}{\mu_p^-}\setminus B_r(0)$. \\
Indeed Proposition \ref{prop:NodalLineShrinks} implies that if $x\in \frac{\mathcal{N}^+_p}{\mu_p^-}$,  then $|x|\leq\fr{\max\limits_{z_p\in NL_p}|z_p|}{\mu^-_p}<r,$ for $p$ large, hence
\[
\left( \frac{\Omega}{\mu_p^-}\setminus B_r(0) \right)
\subset\frac{\mathcal{N}^-_p}{\mu_p^-}\ \ \ \mbox{ for } p \mbox{ large}
\]
and so the claim follows observing that for $x\in \frac{\mathcal{N}^-_p}{\mu_p^-}$, then $|-\Delta s_p^-(x)|\leq1$.\\
Hence, by the arbitrariness of $r>0$, $s_p^-\rightarrow V$ in $C^1_{loc}(\mathbb R^2\setminus \{0\})$ where $V$ is a solution of
$$
-\Delta V=e^{V}\ \  \mbox{ in }\ \R^2\setminus\{0\}
$$
which satisfies $V\leq 0$ and $V(x_{\ell})=0$ where $x_{\ell}:=\lim_p\fr{x^-_p}{\mu^-_p}$ and $|x_{\ell}|=\ell$ by Proposition \ref{prop:NLp+l>0}.
Moreover $e^{V}\in L^1(\R^2)$, indeed for any $r>0$ and for any $\ep\in(0,1)$
\begin{eqnarray*}
\int_{B_{\fr1r}(0)\setminus B_r(0)}e^{V}\,dx&\leq&\int_{B_{\fr1r}(0)\setminus B_r(0)}\fr{|\upp(\mu^-_p x)|^{p+1}}{|\upp(x^-_p)|^{p+1}}dx+o_p(1)\\
&=&\fr{p}{|\upp(x^-_p)|^2}\int_{B_{\fr{\mu^-_p}r}(0)\setminus B_{r\mu^-_p}(0)}|\upp(y)|^{p+1}dy+o_p(1)\\
&\stackrel{Lemma\:\ref{lemma:BoundEnergia} (iii)}{\leq}&\fr{p}{(1-\ep)^2}\int_{\Omega}|\upp(y)|^{p+1}dy+o_p(1)\stackrel{Lemma\: \ref{LemmaLowerUpper}}{<}+\infty.
\end{eqnarray*}
Observe that if $V$ would solve $-\Delta V=e^V$ in the whole  $\R^2$ then necessarily $V(x)=U(x-x_{\ell})$. As a consequence $v_p^-(x)=s_p^-(x+\frac{x_p^-}{\mu_p^-})\rightarrow V(x+x_{\ell})=U(x)$ in $C^1_{loc}(\R^2\setminus\{-x_{\ell}\})$. Observe that $x_{\ell}=\lim_p \frac{x_p^-}{\mu_{p}^-}$ and so Proposition \ref{prop:MaxVaazeroVelocemente} would imply that $\frac{|x_p^-|}{\mu_p^-}\rightarrow 0$ as $p\rightarrow +\infty$, and this is in contradiction with Proposition \ref{prop:NLp+l>0}. Thus, by \cite{ChenLi2,ChouWan1,ChouWan2} and the classification in \cite{ChenLi} we have that $V$ solves, for some $\eta >0$, the following entire equation
\bel
\left\{
  \begin{array}{ll}
    -\lap V=e^{V}-4\pi\eta\delta_0 & \hbox{\textrm{in $\R^2$}} \\
    \int_{\R^2}e^{V}dx=8\pi(1+\eta), & \hbox{\:}
  \end{array}
\right.
\eel
where $\delta_0$ denotes the Dirac measure centered at the origin.

\

We claim that $V$ is radial.

Indeed, by the classification given in \cite{PrajapatTarantello}, we have that either $V$ is radial, or $\eta\in\N$ and $V$ is $(\eta+1)$-symmetric.
Suppose by contradiction that $\eta\in\N$ and $V$ is $(\eta+1)$-symmetric, then, since $V$ is the limit of $s^-_p$ (which is $G$-symmetric with $|G|\geq 4 \, e$) we get $\eta+1\geq 4\,e$ and so
\bel\label{maggiore}\int_{\R^2}e^{V}dx\geq 4\,e\cdot8\pi.
\eel
On the other hand for any $R>0$:
\begin{eqnarray}\label{minore}
\int_{B_R(0)\setminus B_{\fr1R}(0)}e^{V}dx&\leq & \lim_{p\rightarrow +\infty}\int_{B_R(0)\setminus B_{\fr1R}(0)}\left|\fr{\upp(\mu^-_p x)}{\upp(x^-_p)}\right|^{p+1}dx\nonumber\\
&=& \lim_{p\rightarrow +\infty} \fr p{\upp^2(x^-_p)}\int_{B_{R\mu^-_p}(0)\setminus B_{\fr{\mu^-_p}R}(0)}|\upp(y)|^{p+1}dy\nonumber\\
&\stackrel{(*)}\leq& \lim_{p\rightarrow +\infty} \fr p{\upp^2(x^-_p)}\left(\int_\Omega |\upp(y)|^{p+1}dy-\int_{\mathcal{N}^+_p} |\upp(y)|^{p+1}dy\right)\nonumber\\
&\stackrel{(\sharp)}\leq&(\alpha-1)\cdot 8\pi e
\end{eqnarray}
where in $(*)$ we have used that, by Proposition \ref{prop:NodalLineShrinks}, $\mathcal{N}^+_p\subset B_{\fr{\mu^-_p}R}(0)$ and   in $(\sharp)$ we have applied \eqref{RemarkMaxCirca1}, Lemma \ref{LemmaLowerUpper} and \eqref{assumptionEnergy}.
By the arbitrariness of $R$  from \eqref{minore}
we then get
\begin{equation}\label{minoreNew}
\int_{\R^2}e^{V}dx\leq (\alpha-1)\;e\cdot  8\pi.
\end{equation}

Last, using the assumption $\alpha< 5$ in \eqref{minoreNew} we get a contradiction with \eqref{maggiore}.

\

Thus $V$ is radial and $V(r)$ satisfies
\[
\left\{
\begin{array}{lr}-V''-\frac{1}{r}V'=e^{V}\  \mbox{ in } (0, +\infty)\\
V\leq 0\\
V(\ell)=V'(\ell)=0
\end{array}
\right..
\]
The solutions of this problem are
\begin{equation}\label{soluzioneGenerale}
V(r)=\log\left(\frac{4}{\delta^2}\frac{e^{\frac{\sqrt{2}}{\delta}(\log r-y))}}{\left( 1+e^{\frac{\sqrt{2}}{\delta}(\log r-y))}\right)^2}   \right)
\end{equation}
for $\delta>0, y\in\mathbb R$.

Observe that from $V'(r)=0$ we get $\frac{1-\sqrt{2}\delta}{1+\sqrt{2}\delta}=e^{\frac{\sqrt{2}}{\delta}(\log r-y)}$ and moreover $V(r)=0$ for $r=\frac{\sqrt{1-2\delta^2}}{\delta}$.
Hence by $V(\ell)=V'(\ell)=0$ it follows that $\ell^2=\frac{1-2\delta^2}{\delta^2}$ which implies that $\delta=\frac{1}{\sqrt{2+\ell^2}}$.
Inserting this estimate into \eqref{soluzioneGenerale} we get
\[
V(r)=\log\left(\frac{2\alpha^2\beta^{\alpha}r^{\alpha -2}}{(\beta^{\alpha}+r^{\alpha})^2} \right),
\]
where $\alpha=\sqrt{2\ell^2+4}$ and $\beta=\ell \left(\frac{\alpha+2}{\alpha-2} \right)^{1/\alpha}$.\\

The conclusion follows observing that $v_p^-(x)=s_p^-\left( x+\frac{x_p^-}{\mu_p^-}\right)$.
\end{proof}

\

\

\

\begin{proof}[Proof of Theorem \ref{TeoremaPrincipaleCasoSimmetrico}. ]
 It follows from all previous results. More precisely, i) follows from \eqref{boundDaQ1} and Lemma \ref{lemma:BoundEnergia}. The statement ii) is from Proposition \ref{prop:NodalLineShrinks}. Finally the asymptotic behavior of the rescaled functions $v_p^+$ and $v_p^-$ are shown in Proposition \ref{rem:x^+=x_1} and Proposition \ref{prop:scalingNegativo}.
\end{proof}

\

\

\begin{remark}
By \eqref{rapportoMu} applied to any $(x_p)_p\subset\Omega$
such that $u_p(x_p)<0$, $\mu_p^{-2}=p|u_p(x_p)|^{p-1}\to +\infty$, we easily derive
 \[p\left(u_p(x_p^+)+u_p(x_p)\right)\rightarrow + \infty.\]

Indeed if by contradiction
\begin{equation} \label{contradiction} p_n\left(u_{p_n}(x_{p_n}^+)+u_{p_n}(x_{p_n})\right)\rightarrow K\geq 0\end{equation}
Then, setting $c_{\infty}:=\lim_p u_p(x_p^+)>0$, we would have
\[
\frac{{\mu^+_p}^2}{{\mu_p}^2}=\left(\fr{|\upp(x_p)|}{\upp(x_p^+)}\right)^{p-1}=\left(1-\frac{\frac{p(\upp(x^+_p)+\upp(x_p))}{\upp(x^+_p)}}{p}\right)^{p-1}\stackrel{p\to+\infty}{\to} e^{-K/c_{\infty}}\neq 0,
\]
which is in contradiction with \eqref{rapportoMu}.

Thus in particular
\[p\left(u_p(x_p^+)+u_p(x_p^-)\right)\rightarrow + \infty,\]
which means, in the notation of \cite{GrossiGrumiauPacella1}, that $\upp$ is of type $B'$.
\end{remark}

\

\begin{remark}
It is not difficult to prove an analogous of Theorem \ref{TeoremaPrincipaleCasoSimmetrico} for higher energy solutions, under stronger symmetry assumptions. Precisely one could substitute the assumptions
\eqref{primaHpSimm} and \eqref{assumptionEnergy} by
\begin{equation}\label{primaHpSimmmmmm}
|G|\geq me
\end{equation}
and
\bel\label{assumptionEnergymmmmmmm}
p\int_{\Omega}|\nabla u_p|^2\leq \alpha\,8\pi e
\eel
for some $\alpha < m+1$ and $p$  large, for any $m\in\N\setminus\{0\}$.
\end{remark}

\

\

\section{Further results and open questions}\label{sectionOpenProblems}
The asymptotic result of Theorem \ref{TeoremaPrincipaleCasoSimmetrico} together with the existence result of  \cite{DeMarchisIanniPacella} shows the presence of sign changing $G$-symmetric solutions of  \eqref{problem} whose limit profile, as $p\rightarrow +\infty$, looks like the superposition of (at least) two different signed bubbles coming, roughly speaking, from a regular and a singular solution of  \eqref{LiouvilleEquation} and  \eqref{LiouvilleSingularEquation}.

\

The two bumps could carry different  energies but we cannot precisely estimate them so to say that they ``exhaust" all the energy of the solutions $u_p$ which is bounded by \eqref{assumptionEnergyINIZIO}. This means that ``a priori" one could think that other bumps could develop as $p\rightarrow +\infty$. We believe that this is not the case, as it happens in the radial setting studied in \cite{GrossiGrumiauPacella2}.

\

A partial result in this direction is obtained in the next proposition which exclude the presence of other positive bumps having the limit profile of a regular solution of \eqref{LiouvilleEquation}.

\

\begin{proposition}\label{prop:N=1} Under the same assumptions of Theorem \ref{TeoremaPrincipaleCasoSimmetrico},
 let $(x_p)\subset\Omega$ be such that $\mu_p^{-2}:=p|\upp(x_p)|^{p-1}\to +\infty$ and assume that
\[u_p(x_p)>0\]
and that the rescaled functions $v_p(x):=\fr{p}{\upp(x_{p})}(\upp(x_{p}+\mu_{p} x)-\upp(x_{p}))$ converge to $U$ in
$C^1_{loc}(\R^2\setminus\{0\})$. Then
\[x_p=x^+_p+o_p(1)\mu^+_p\]
\[\frac{\mu^+_p}{\mu_p}\to 1\ \ \ \mbox{ as } p\rightarrow +\infty\]
\[u_p(x_p)\rightarrow c_{\infty}\ \ \ \mbox{ as } p\rightarrow +\infty\]
where $c_{\infty}:=\lim_p \|u_p^+\|_{\infty}$.\\
So, roughly speaking, scaling about $x_p$ with respect to its parameter we obtain the same bubble appearing from the scaling about $x^+_p$ with respect to $\mu_p^+$.
\end{proposition}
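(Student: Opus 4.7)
The plan is to compare the two rescalings $v_p^+$ (around $x_p^+$) and $v_p$ (around $x_p$) via the explicit change-of-variable identity relating them, and then split the analysis according to the limit of $\lambda_p:=\mu_p^+/\mu_p$. First I would combine $k=1$ (Proposition \ref{prop:bark=1}) with $(\mathcal{P}_3^1)$ and Proposition \ref{lemma:rapportoMuBounded} to obtain $|x_p-x_p^+|/\mu_p\leq\sqrt{C}$ and $\lambda_p\leq 1$ for $p$ large. Setting $a_p:=u_p(x_p)/u_p(x_p^+)\in(0,1]$ and $y_p:=(x_p-x_p^+)/\mu_p^+$, the definitions of $v_p^+$ and $v_p$ yield the key identity
\[
v_p^+(y_p+x/\lambda_p)=p(a_p-1)+a_p v_p(x),\qquad x\in\R^2,
\]
together with $a_p^{p-1}=\lambda_p^2$. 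Extracting subsequences so that $\lambda_p\to\lambda\in[0,1]$ and $a_p\to a_\infty$, the latter is bounded away from $0$ by \eqref{RemarkMaxCirca1} and Lemma \ref{lemma:BoundEnergia}(i).

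Suppose first $\lambda>0$. Then $a_p^{p-1}\to\lambda^2>0$ forces $a_p\to 1$ and $p(a_p-1)\to 2\log\lambda$, while $|y_p|\leq\sqrt{C}/\lambda_p$ is bounded, so up to a further extraction $y_p\to y_\infty\in\R^2$. Passing to the limit in the identity above for fixed $x\neq 0$ (using the $C^1_{loc}(\R^2)$ convergence of $v_p^+$ from Proposition \ref{rem:x^+=x_1} and the $C^1_{loc}(\R^2\setminus\{0\})$ convergence of $v_p$ from the hypothesis) yields the functional equation
\[
U(y_\infty+x/\lambda)=2\log\lambda+U(x);
\]
substituting $U(z)=-2\log(1+|z|^2/8)$ and matching polynomial coefficients in $x$, the linear term forces $y_\infty=0$ and the quadratic term then forces $\lambda=1$. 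This gives $\mu_p^+/\mu_p\to 1$, $(x_p-x_p^+)/\mu_p^+\to 0$, and $u_p(x_p)=a_p u_p(x_p^+)\to c_\infty$, the three stated conclusions.

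The hard part will be ruling out $\lambda=0$, and I would handle it by a divergence-theorem/concentration-of-mass argument: an outer bubble of scale $\mu_p$ that still contains the inner bubble of scale $\mu_p^+\ll\mu_p$ would carry, in the distributional limit of $-\Delta v_p$, an extra Dirac mass at the origin, contradicting the regular limit $U$. Concretely, if $\lambda_p\to 0$, by Proposition \ref{prop:distanzaxipNL} one has $u_p>0$ on $B_{R\mu_p}(x_p)$ for $p$ large, so $-\Delta v_p=(1+v_p/p)^p$ on $B_R(0)$; the divergence theorem together with the $C^1_{loc}(\R^2\setminus\{0\})$ convergence of $v_p$ on $\partial B_R$ then gives
\[
\lim_{p\to+\infty}\int_{B_R(0)}(1+v_p/p)^p\,dy=-\int_{\partial B_R}\nabla U\cdot\nu\,d\sigma=\int_{B_R(0)}e^U\,dy.
\]
On the other hand, setting $w_p:=\lambda_p y_p=(x_p-x_p^+)/\mu_p$ and using $1+v_p(y)/p=(1+v_p^+(\tilde y)/p)/a_p$ under the change of variable $y=\lambda_p\tilde y-w_p$ (jacobian $\lambda_p^2$, together with $\lambda_p^2/a_p^p=1/a_p$), one gets
\[
\int_{B_\delta(0)}(1+v_p/p)^p\,dy=\frac{1}{a_p}\int_{B_{\delta/\lambda_p}(y_p)}(1+v_p^+/p)^p\,d\tilde y.
\]
Since $|w_p|\to 0$ and $\lambda_p\to 0$, the ball $B_{\delta/\lambda_p}(y_p)$ eventually contains every fixed compact of $\R^2$, so Fatou together with $v_p^+\to U$ and $\int_{\R^2}e^U=8\pi$ yields $\liminf_p\int_{B_\delta(0)}(1+v_p/p)^p\,dy\geq 8\pi/a_\infty$. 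Coupled with the outer convergence $\int_{B_R\setminus B_\delta}(1+v_p/p)^p\to\int_{B_R\setminus B_\delta}e^U$, this forces $\int_{B_\delta(0)}e^U\geq 8\pi/a_\infty$, which fails for $\delta$ small since $a_\infty>0$. Hence $\lambda=0$ is impossible, and since the surviving subsequential limits $\lambda=1$ and $y_\infty=0$ are unique, the convergence upgrades to the full sequence.
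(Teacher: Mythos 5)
Your proposal is correct, but it reorganizes the argument around a different case split and replaces one of the paper's key steps by a different mechanism. The paper proves first a dichotomy on $|x_p-x_p^+|/\mu_p^+$ (limit $0$ or $+\infty$): the intermediate case $|x_p-x_p^+|/\mu_p^+\to C\in(0,+\infty)$ is excluded by computing $\mu_p^+/\mu_p\to e^{U(x_C)/2}>0$ and contradicting $|x_p-x_p^+|/\mu_p\to 0$ (which comes from the symmetry via Proposition \ref{prop:MaxVaazeroVelocemente}); then the case $+\infty$ is excluded by a divergence-theorem estimate on $B_{r\mu_p}(x_p)$ with a radius $r$ chosen so that the boundary flux $2\pi r\sup_{|x|=r}|\nabla U|$ is strictly smaller than the inner-bubble mass $\int_{B_1(0)}e^U$. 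You instead split on $\lambda=\lim\mu_p^+/\mu_p$: for $\lambda>0$ the exact identity $v_p^+(y_p+x/\lambda_p)=p(a_p-1)+a_p v_p(x)$ passes to the limit and the rigidity of the explicit bubble (matching coefficients in $U(y_\infty+x/\lambda)=2\log\lambda+U(x)$) forces $\lambda=1$ and $y_\infty=0$ in one stroke, which subsumes the paper's Steps 1 and 3; for $\lambda=0$ you run a mass-counting version of the divergence-theorem argument, showing the inner bubble would deposit at least $8\pi/a_\infty\geq 8\pi$ of mass in every ball $B_\delta(0)$ of the outer rescaling while the outer limit $U$ only carries $\int_{B_\delta}e^U<8\pi$ there. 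Both degenerate-case arguments rest on the same ingredients (Proposition \ref{prop:distanzaxipNL} to guarantee positivity on $B_{R\mu_p}(x_p)$, the flux identity, and the inclusion of the inner bubble), but yours compares total masses rather than tuning a radius; your non-degenerate case trades the paper's symmetry-based contradiction for the algebraic rigidity of the Liouville profile. The details I checked — the identity $a_p^{p-1}=\lambda_p^2$, the bound $|w_p|=|x_p-x_p^+|/\mu_p\to0$ needed so that $B_{\delta/\lambda_p}(y_p)$ exhausts $\R^2$, the positivity of the integrand needed for Fatou, and the lower bound $a_\infty>0$ from \eqref{RemarkMaxCirca1} and Lemma \ref{lemma:BoundEnergia}(i) — all hold, so the proof is complete.
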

\begin{proof}
\textit{Step 1. The following alternative holds:
\begin{equation}\label{alternativaPositive}
\mbox{either }\  \  \ \frac{|x^+_p-x_p|}{\mu_p^+}\to 0\quad\textrm{or}\quad \frac{|x^+_p-x_p|}{\mu_p^+}\to+\infty\ \ \ \mbox{ as } p\rightarrow +\infty.
\end{equation}}

Indeed if by contradiction there exists $C>0$ such that $\frac{|x^+_p-x_p|}{\mu_p^+}\to C$, then by Proposition \ref{rem:x^+=x_1} we get, for  $x_C:=\lim_p \frac{x_p-x_p^+}{\mu_p^+}$, $x_C\in\partial B_C(0)$:
\[
v^+_p\left(\frac{x_p-x^+_p}{\mu^+_p}\right)\to U(x_C)\in (-\infty, 0) \ \ \ \mbox{ as } p\rightarrow +\infty
\]
and so
\[
\frac{|x^+_p-x_p|}{\mu_p}=
%
%
=\frac{|x^+_p-x_p|}{\mu_p^+}\left(1+\frac{v_p^+(\frac{x_p-x_p^+}{\mu_p^+})}p\right)^{\frac{p-1}2}\to C e^{\frac{U(x_\infty)}2}>0 \ \ \ \mbox{ as } p\rightarrow +\infty.
\]
This yields to a contradiction because by Proposition \ref{prop:MaxVaazeroVelocemente}
\[
\frac{|x^+_p-x_p|}{\mu_p}\leq\frac{|x_p|}{\mu_p}+\frac{|x^+_p|}{\mu^+_p}\to0\ \ \ \mbox{ as } p\rightarrow +\infty.
\]

\

\

\textit{Step 2. We prove that only the first alternative in \eqref{alternativaPositive} holds, namely
\begin{equation}\label{onlyTheFirstAlternative}
 \frac{|x^+_p-x_p|}{\mu_p^+}\to 0 \ \ \ \mbox{ as }\ p\rightarrow +\infty.
\end{equation} }

Suppose by contradiction that $\tfrac{|x^+_p-x_p|}{\mu_p^+}\to+\infty$.
As a consequence, by Proposition \ref{lemma:rappMuZero}, we have \bel\label{rapporto}\tfrac{\mu^+_p}{\mu_p}\to0.\eel

By the divergence theorem, for any $r>0$ and $p\geq p_r$ we also have:
\begin{eqnarray}\label{ABCp}
-p\int_{\partial B_{r\mup}(\xp)}\nabla \upp(y)\cdot\frac{y-x_{p}}{|y-x_{p}|}\,d\sigma(y)&=&-p\int_{B_{r\mup}(\xp)}\lap\upp(x)\,dx=\nonumber\\
&{=}& p\int_{B_{r\mup}(\xp)}|\upp(x)|^p\,dx,
\end{eqnarray}
where for  the last equality we have used \eqref{problem}, the assumption $u_p(x_p)>0$ and Proposition \ref{prop:distanzaxipNL} to deduce that, for $p\geq p_r$, $B_{r\mu_p}(x_p)\subset{\mathcal N}_p^+$.\\

Now, since the function $U$ introduced in \eqref{v0} is in $C^\infty(\R^2)$, it is possible to fix $r>0$ fulfilling the following condition:
\begin{equation}\label{r12}
2\pi r \sup_{|x|= r}|\nabla U(x)|\leq\fr23\int_{B_1(0)}e^{U}\,dx.
\end{equation}
With this choice of $r$ we estimate the two terms of \eqref{ABCp}.\\

By Proposition \ref{prop:MaxVaazeroVelocemente} and \eqref{rapporto}, there exists $p_r'$ such that for any $p\geq p'_r$ $B_{\mu^+_p}(x^+_p)\subset B_{r\mup}(\xp)$; moreover, using the convergence of $v_p^+$ to $U$ in $C^1_{loc}(\mathbb R^2)$, we get
\begin{eqnarray}\label{Bp}
p\int_{B_{r\mup}(\xp)}|\upp(x)|^p\,dx&\geq&  p\int_{B_{\mu_p^+}(x_p^+)}|\upp(x)|^p\,dx \nonumber\\
&=& {\int_{B_1(0)}\fr{|\upp(x_p^++\mu_p^+ y)|^{p}}{|\upp(x_p^+)|^{p-1}}\,dy}\nonumber\\
 &=&{u_p(x_p^+)\int_{B_1(0)}\left|1+\fr{\upp(x_p^++\mu_p^+ y)-\upp(x_p^+)}{\upp(x_p^+)}\right|^{p}\,dy}\nonumber\\
  &=&{u_p(x_p^+)\int_{B_1(0)}\left|1+\fr{v_p^+(y)}{p}\right|^{p}\,dy}\nonumber\\
&=& c_{\infty}\int_{B_1(0)}e^{U}+o_p(1),
\end{eqnarray}
where $c_{\infty}:=\lim_{p\rightarrow +\infty}\|u_p\|_{\infty}$.\\

Finally, rescaling $\upp$ around $\xp$ with respect to $\mup$, by the convergence of $v_{p}$ to $U$ in $C^1_{loc}(\R^2\setminus\{0\})$ we obtain for $p\geq p''_r$
\begin{eqnarray}\label{Cp}
\left|p\int_{\partial B_{r\mup}(\xp)}\nabla \upp(y)\cdot\frac{y-x_{p}}{|y-x_{p}|}\,d\sigma(y)\right|
&=&
\left|\int_{\partial B_r(0)}\upp(\xp)\nabla v_{p}(x)\cdot\fr{ x}{| x|}\,d\sigma(x)\right|\nonumber
\\
&=&
\upp(\xp)\,\left|\int_{\partial B_r(0)}\nabla v_{p}(x)\cdot\fr{ x}{| x|}\,d\sigma(x)\right|\nonumber
\\
&\leq&
\upp(\xp)\,2\pi r \sup_{|x|= r}\ |\nabla v_{p}(x)|\nonumber
\\
&\leq & c_\infty 2\pi r \sup_{|x|= r}|\nabla U(x)|+o_p(1).
\end{eqnarray}

In conclusion, by our choice of $r$, collecting \eqref{Bp} and \eqref{Cp} we derive for $p\geq\max\{p_r,p'_r,p''_r\}$:
\begin{eqnarray*}
0<c_\infty\int_{B_1(0)}e^{U}\,dx+o_p(1)&\leq& p\int_{B_{r\mup}(\xp)}|\upp(x)|^p\,dx\nonumber\\
&\!\!\!\!\!\!\!\!\!\!\!\!\!\!\!\!\!\!\!\!\!\!\!\!\!\!\!\!\!\!=&\!\!\!\!\!\!\!\!\!\!\!\!\!\!\!\!\!\!\left|p\int_{\partial B_{r\mup}(\xp)}\nabla \upp(y)\cdot\frac{y-\xp}{|y-\xp|}\,d\sigma(y)\right|\nonumber\\
&\!\!\!\!\!\!\!\!\!\!\!\!\!\!\!\!\!\!\!\!\!\!\!\!\!\!\!\!\!\!\leq&\!\!\!\!\!\!\!\!\!\!\!\!\!\!\!\!\!\!c_\infty 2\pi r \sup_{|x|= r}|\nabla U(x)|+o_p(1)\stackrel{\eqref{r12}}{\leq} c_\infty \fr23\int_{B_1(0)}e^{U}\,dx+o_p(1),
\end{eqnarray*}
which is clearly a contradiction.

\

\textit{Step 3. Conclusion of the proof.}

By \eqref{onlyTheFirstAlternative} and Proposition \ref{rem:x^+=x_1} we get:
\[
v^+_p\left(\frac{x_p-x^+_p}{\mu^+_p}\right)\to U(0)=0 \ \ \ \mbox{ as } p\rightarrow +\infty,
\]
and so
\[
\left(\frac{\mu^+_p}{\mu_p}\right)^2=\left(\frac{\upp(x_p)}{\upp(x^+_p)}\right)^{p-1}=\left(1+\frac{v^+_p\left(\frac{x_p-x^+_p}{\mu^+_p}\right)}p\right)^{p-1}\to1 \ \ \ \mbox{ as } p\rightarrow +\infty
\]
and
\[
\frac{u_p(x_p)}{u_p(x_p^+)}-1=\frac{1}{p}\ v^+_p\left(\frac{x_p-x^+_p}{\mu^+_p}\right) \rightarrow 0 \ \ \ \mbox{ as } p\rightarrow +\infty.
\]
\end{proof}

\

\

\begin{remark} We are not able to get a similar  result in the negative nodal region i.e. for points $(x_p)\subset\Omega$ such that $u_p(x_p)<0$ and  $\mu_p^{-2}:=p|\upp(x_p)|^{p-1}\to +\infty$.

In this case, using  Proposition \ref{prop:NodalLineShrinks}, Corollary \ref{prop:MinVaazero} and Proposition \ref{prop:NLp+l>0} it is easy to get that
\begin{equation}\label{pa}
\max_{y_p\in NL_p}\frac{|x_p-y_p|}{\mu_p}\leq C
\ \mbox{ and  }\
\frac{|x_p-x_p^-|}{\mu_p}\leq C
\end{equation}
for $p$ large, which seems to indicate that there are no other negative bumps other than the one previously found.
\end{remark}

As previously said the main reason why we cannot exclude the presence of other bubbles, under the hypothesis of Theorem \ref{TeoremaPrincipaleCasoSimmetrico}, is that we cannot precisely estimate the energy carried by each bubble so to use the bound \eqref{assumptionEnergyINIZIO}  to say that the two bubbles given by rescaling about $x_p^+$ and $x_p^-$ use all the available energy. Let us point out that the energy carried by each of these bubbles depends on two quantities:
\begin{itemize}
\item[$i)$] the energy of the solution of the limit problem (related to the bubble)
\item[$ii)$] the limit values of $u_p(x_p^+)$ or $u_p(x_p^-)$.
\end{itemize}
In the case of the positive bubble obtained by rescaling about $x_p^+$ we know $i)$ but we miss a good estimate of $u_p(x_p^+)$ in $ii)$. Motivated by the results concerning the radial situation (\cite{GrossiGrumiauPacella2}) we conjecture that

$$
\mbox{\bf{(C1)}}\qquad\qquad \lim_p u_p(x_p^+)=A^+>\sqrt{e}.\qquad\qquad\qquad\qquad\qquad\qquad\quad\;
$$

Note that if we knew this we could reduce the assumption on the symmetry group $G$, by just requiring $|G|\geq 4$, as in \cite{DeMarchisIanniPacella} (see the proof of Proposition \ref{MaxVaazero} and Remark \ref{rem:buonNormaInfinitoAbbassaSimmetria}).

In the case of the negative bubble obtained by rescaling about $x_p^-$ we neither have a good estimate of the energy of the singular solution of the limit problem (since it depends on the constant $\ell=\lim_{p\rightarrow +\infty}\frac{|x_p^-|}{\mu_p^-}>0$) nor a good estimate of $u_p(x_p^-)$.
Thinking again of the radial solution (\cite{GrossiGrumiauPacella2}) we conjecture that
$$
\mbox{\bf{(C2)}}\qquad\qquad 
\lim_p p\int_{\Omega} |\nabla u_p^-|=B^-> 8\pi e \qquad\qquad\qquad\qquad\qquad\qquad
$$
and 
$$
\mbox{\bf{(C3)}}\qquad\qquad 
\lim_p u_p(x_p^-)=A^-,\  \ 1<A^-<\sqrt{e}.\qquad\qquad\qquad\qquad\quad\;
$$
More generally we believe that estimates analogue to  {\bf (C1)}, {\bf (C2)} and {\bf (C3)} should hold for bubble tower solutions of \eqref{problem} in general domains.

\end{document}